\newcommand{\scal}[2]{\langle #1,#2\rangle}
\newcommand{\rr}[1]{\mathbf R^{#1}}
\newcommand{\cc}[1]{\mathbf C^{#1}}
\newcommand{\nm}[2]{\Vert #1\Vert _{#2}}
\newcommand{\sets}[2]{\{ \, #1\, ;\, #2\, \} }
\newcommand{\Sets}[2]{\left \{ \, #1\, ;\, #2\, \right \} }
\newcommand{\cdo}{\, \cdot \, }
\newcommand{\wpr}{{\text{\footnotesize $\#$}}}
\newcommand{\eabs}[1]{\langle #1\rangle}     
\newcommand{\ON}{\operatorname{ON}}
\newcommand{\vrum}{\vspace{0.1cm}}
\newcommand{\OG}{\operatorname{OG}}
\newcommand{\nn}[1]{{\mathbf N}^{#1}}
\newcommand{\maclH}{\mathcal H}
\newcommand{\maclM}{\mathcal M}
\newcommand{\maclS}{\mathcal S}
\newcommand{\mascB}{\mathscr B}
\newcommand{\mascF}{\mathscr F}
\newcommand{\mascH}{\mathscr H}
\newcommand{\mascI}{\mathscr I}
\newcommand{\mascP}{\mathscr P}
\newcommand{\mascS}{\mathscr S}
\newcommand{\bsySig}{\boldsymbol \Sigma}
\newcommand{\bsycalS}{\boldsymbol {\mathcal S}}
\numberwithin{equation}{section}          
\newtheorem{thm}{Theorem}
\numberwithin{thm}{section}
\newcommand{\rubrik}{}
\newtheorem{prop}[thm]{Proposition}
\newtheorem{lemma}[thm]{Lemma}
\theoremstyle{definition}
\newtheorem{defn}[thm]{Definition}
\theoremstyle{remark}
\newtheorem{rem}[thm]{Remark}
\author{Yuanyuan Chen}
\address{Department of Mathematics,
Linn{\ae}us University, V{\"a}xj{\"o}, Sweden}
\email{yuanyuan.chen@lnu.se}
\author{Michael Signahl}
\address{Department of Mathematical Sciences,
University of Agder, Kristiansand, Norway}
\email{mikael.signal@lnu.se}
\author{Joachim Toft}
\address{Department of Mathematics,
Linn{\ae}us University, V{\"a}xj{\"o}, Sweden}
\email{joachim.toft@lnu.se}
\title{Factorizations and singular value estimates of
operators with Gelfand-Shilov and Pilipovi{\'c} kernels}
\keywords{matrices, harmonic oscillator, Hermite functions, kernel theorems, Schatten-von
Neumann operators, singular values}
\subjclass[2010]{primary: 47Bxx,15A23, 45P05
secondary: 35S05}
\begin{document}

\begin{abstract}
We prove that any linear operator with kernel in a Pilipovi{\'c} or Gelfand-Shilov space
can be factorized by two operators in the same class. We also give links on
numerical approximations for such compositions. We apply these composition rules
to deduce estimates of singular values and establish Schatten-von Neumann properties
for such operators.
\end{abstract}

\maketitle

\par

\section{Introduction}\label{sec0}

\par

The singular values and their decays are strongly related to possibilities
of obtaining suitable finite rank approximations of the operators. For a
linear and compact operator which acts between Hilbert spaces, the
singular values are the eigenvalues in decreasing order of the
modulus of the operator. If more generally, the linear operator $T$ is
continuous from the quasi-Banach space $\mascB _1$ to (another)
quasi-Banach space $\mascB _2$, then the singular value of order
$j\ge 1$ is given by
\begin{equation}\label{Eq:SingValDef}
\sigma _j(T)= \sigma _j(T,\mascB _1,\mascB _2) \equiv 
\inf \nm {T-T_0}{\mascB _1\to \mascB _2},
\end{equation}
where the infimum is taken over all linear operators $T_0$ from
$\mascB _1$ to $\mascB _2$ of rank at most $j-1$. (See Section \ref{sec1}
for notations.) It follows that $T$ is compact, if and only if $\sigma _j(T)$ decreases to
zero as $j$ tends to infinity, or equivalently, $T$ can be approximated by finite
rank operators with arbitrarily small errors.

\par

In this paper we deduce estimates of $\sigma _j(T)$ when $\mascB _1$ and
$\mascB _2$ stays between small test function spaces, denoted by $\maclH _s(\rr d)$
and $\maclH _{0,s}(\rr d)$, and their (large) duals. The spaces $\maclH _s(\rr d)$
and $\maclH _{0,s}(\rr d)$ depend on the parameter $s\ge 0$ and are obtained by
imposing certain exponential type estimates on the Hermite coefficients of the Hermite
series expansions of the involved functions. More precisely, the set $\maclH _s(\rr d)$
($\maclH _{0,s}(\rr d)$) consists of all
$$
f = \sum _{\alpha }c_\alpha h_\alpha
$$
such that $|c_\alpha |\lesssim e^{-c|\alpha |^{\frac 1{2s}}}$ for some (for every) $c>0$. It
follows that $\maclH _s(\rr d)$ and $\maclH _{0,s}(\rr d)$ increase with $s$, and are
continuously embedded and dense in $\mascS (\rr d)$.

\par

In \cite{To13} the spaces $\maclH _s(\rr d)$ and $\maclH _{0,s}(\rr d)$ and their
duals were characterized in different ways. For example, the images
under the Bargmann transform were given, and it was proved that
$f\in \maclH _s(\rr d)$ ($f\in \maclH _s(\rr d)$), if and only if $f$ satisfies
\begin{equation}\label{Eq:IntHarmPowEst}
|H^Nf(x)| \lesssim h^NN!^{2s}
\end{equation}
for some $h>0$ (for every $h>0$), where $H=H_d$ is the harmonic oscillator $|x|^2-\Delta$
on $\rr d$.
In this context we recall that Pilipovi{\'c} introduced in \cite{Pil2} function spaces whose elements
obey estimates of the form \eqref{Eq:IntHarmPowEst} for certain choices of $s$. For
this reason, we call $\maclH _s(\rr d)$ and $\maclH _{0,s}(\rr d)$ the Pilipovi{\'c}
spaces of Roumieu and Beurling type, respectively, of degree $s\ge 0$ (cf. in \cite{To13}).

\par

In \cite{Pil2}, it is also proved that $\maclH _{s_1}(\rr d)$ and $\maclH _{0,s_2}(\rr d)$
agree with the Gelfand-Shilov spaces $\maclS _{s_1}(\rr d)$ and $\Sigma _{s_2}(\rr d)$,
respectively, when $s_1\ge \frac 12$ and $s_2>\frac 12$, while $\maclH _{0,\frac 12}(\rr d)$
is different from the trivial space $\Sigma _{\frac 12}(\rr d)=\{ 0\}$. The family of Pilipovi{\'c}
spaces therefore contains all Gelfand-Shilov
spaces which are invariant under Fourier transformations.

\par

In Section \ref{sec4} we consider linear operators whose kernels belong to
$\maclH _s(\rr {2d})$. We show that the singular values of such operator
satisfies the estimate
\begin{equation}\label{Eq:StrongSVEst1}
\sigma _k(T,\mascB _1,\mascB _2) \lesssim e^{-ck^{\frac 1{2ds}}}
\end{equation}
for some $c>0$, when $\mascB _j$ stays between $\maclH _s(\rr d)$ and
its dual. If the $\maclH _s$-spaces and their duals are replaced by
$\maclH _{0,s}$-spaces and their duals, then
we also prove that \eqref{Eq:StrongSVEst1} is true for every $c>0$. Furthermore,
if $\maclH _s$-spaces and their duals are replaced by Schwartz spaces and their duals,
then we prove
\begin{equation}\label{Eq:StrongSVEst2}
\sigma _k(T,\mascB _1,\mascB _2) \lesssim \eabs k^{-N}
\end{equation}
for every $N\ge 0$, which should be available in the literature.

\par

These singular-value estimates are based on the fact that the operator classes
here above possess convenient factorization properties, which are deduced
in Section \ref{sec3}. More precisely, an
operator class $\maclM$ is called a \emph{factorization algebra}, if for every
$T\in \maclM$, there exist $T_1,T_2\in \maclM$ such that $T=T_1\circ T_2$.
(In \cite{ToKrNiNo} the term \emph{decomposition algebra} is used instead
of factorization algebra.) Evidently, $\mathscr L(\mascB )$, the set of
continuous linear operators on the quasi-Banach space $\mascB$ is a
factorization algebra, since we may choose $T_1$ as the identity operator and
$T_2=T$. A more challenging situation appears when $\maclM$ does not
contain the identity operator, and in this situation it is easy to find operator
classes which are not factorization algebras. For example, any Schatten-von
Neumann class of finite order is not a factorization algebra.

\par

If $\mascB$ above is a Hilbert space and $\maclM$ is the set of compact
operators on $\mascB$, then it follows by an application of the spectral
theorem that $\maclM$ is a factorization algebra. It is also well-known
that the set of linear operators with kernels in the Schwartz space is a
factorization algebra (see e.{\,}g. \cite{Beals, Kl, Sad, ToKrNiNo, Vo}).
Furthemore, similar facts hold true for the set of operators with kernels
in a fixed Gelfand-Shilov space (cf. \cite{ToKrNiNo}).

\par

In Section \ref{sec3} we extend the latter property such that all Pilipovi{\'c}
spaces are included. That is, we prove that the set of operators with kernels
in a fixed (but arbitrarily chosen) Pilipovi{\'c} space is a factorization algebra.

\par

Since the singular values of the operators under considerations either satisfy
conditions of the form \eqref{Eq:StrongSVEst1} or \eqref{Eq:StrongSVEst2}
for every $N\ge 0$, it follows that the sequence $\{ \sigma _j(T)\} _{j=1}^\infty$
belongs to $\ell ^p$ for every $p>0$. This implies that any such operator is a
Schatten-von Neumann operator of degree $p$ for every $p>0$.

\par

Here we remark that the latter conclusions in the Gelfand-Shilov situation, were
deduced in \cite{ToKrNiNo} in slight different ways, which enables to replace
the quasi-Banach spaces $\mascB _1$ and $\mascB _2$ by convenient Hilbert
spaces. The main property behind the latter reduction concerns
\cite[Proposition 3.8]{To12}, where it is proved that if $s\ge \frac 12$ and
$$
\maclH _s(\rr d)\subseteq \mascB \subseteq \maclH _s'(\rr d),
$$
then there are Hilbert spaces $\mascH _1$ and $\mascH _2$ such that
$$
\maclH _s(\rr d)\subseteq \mascH _1 \subseteq \mascB \subseteq
\mascH _2\subseteq \maclH _s'(\rr d).
$$
The Schatten-von Neumann properties are then obtained in straight-forward ways
by the factorization properties in combination with the exact formulas, for
Hilbert-Schmidt norms of operators acting between Hilbert spaces.

\par

Our investigations also include analysis of operators with kernels in
$\maclH _{\flat _\sigma}$, $\maclH _{0,\flat _\sigma}$, $\sigma >0$, or their
duals. These spaces were carefully investigated in \cite{To13} and are defined
by imposing conditions of the form $h^{|\alpha |}(\alpha !)^{\frac 1{2\sigma}}$ on
the Hermite coefficients of the involved functions. In \cite{To13}, these spaces
are characterized in different ways. For example, it is here proved that the
Bargmann transform is bijective from $\maclH _{\flat _\sigma}(\rr d)$ to the
set of all entire functions $F$ on $\cc d$ such that
$$
|F(z)|\lesssim e^{c|z|^{\frac {2\sigma}{\sigma +1}}}
$$
for some constant $C>0$.

\par

In Section \ref{sec2} we deduce kernel theorems for operators with kernels
in these spaces, or related distribution spaces. In Section \ref{sec3} we
show certain factorization properties of operators with kernels in
$\maclH _{\flat _\sigma}$ or in $\maclH _{0,\flat _\sigma}$. These factorization
results are slightly weaker compared to what is deduced for operators with kernels in
$\maclH _s$ and $\maclH _{0,s}$ when $s\ge 0$ is real.

\par

Finally we apply these factorization properties in Section \ref{sec4}, to obtain singular
value decompositions for operators with kernels in $\maclH _{\flat _\sigma}$
or in $\maclH _{0,\flat _\sigma}$. In particular we show that if $T$ is an operator on
$L^2(\rr d)$ with kernel in $\maclH _{\flat _\sigma}(\rr {2d})$, then the singular
values of $T$ satisfy
$$
\sigma _j(T)\lesssim h^j(j!)^{-\frac 1{2\sigma d}},
$$
for some constant $h>0$.
%
%
%
%
%
%

\par

\section{Preliminaries}\label{sec1}

\par

In this section we recall some basic facts. We start by discussing
Pilipovi{\'c} 
spaces and their properties. Thereafter we consider
suitable spaces of formal Hermite series expansions, and discuss
their links with Pilipovi{\'c} spaces.

%

\par

\subsection{The Pilipovi{\'c} spaces}\label{subsec1.1}
We start to consider spaces which are obtained by suitable
estimates of Gelfand-Shilov or Gevrey type when using powers of the harmonic
oscillator $H=|x|^2-\Delta$, $x\in \rr d$. 

\par

Let $h>0$, $s\ge 0$ and let $\bsycalS _{\! h,s}(\rr d)$ be the set of all
$f\in C^\infty (\rr d)$ such that
\begin{equation}\label{GFHarmCond}
\nm f{\bsycalS _{\! h,s}}\equiv \sup _{N\ge 0}
\frac {\nm{H^Nf}{L^\infty}}{h^N(N!)^{2s}}<\infty .
\end{equation}
Then $\bsycalS _{\! h,s}(\rr d)$ is a Banach space. If $s>0$, then
$\bsycalS _{\! h,s}(\rr d)$ contains all Hermite functions. Furthermore,
if $s=0$, and $\alpha \in \nn d$ satisfies $2|\alpha |+d\le h$,
then $h_\alpha \in \bsycalS _{\! h,s}(\rr d)$.

\par

We let
$$
\bsySig _s(\rr d) \equiv \bigcap _{h>0}\bsycalS _{\! h,s}(\rr d)
\quad \text{and}\quad
\bsycalS _{\! s}(\rr d) \equiv \bigcup _{h>0}\bsycalS _{\! h,s}(\rr d),
$$
and equip these spaces by projective and inductive limit topologies,
respectively, of $\bsycalS _{\! h,s}(\rr d)$, $h>0$. (Cf. \cite{GrPiRo,Pil1,Pil2,To13}.)

\par

In \cite{Pil1,Pil2}, Pilipovi{\'c} proved that if $s_1\ge \frac 12$ and $s_2>\frac 12$, then
$\bsycalS _{\! s_1}(\rr d)$ and $\bsySig _{s_2}(\rr d)$ agree with the
Gelfand-Shilov spaces $\maclS _{s_1}(\rr d)$ and $\Sigma _{s_2}(\rr d)$, respectively.
(See e.{\,}g. \cite{To13} for notations.) 
On the other hand,  $\maclS _{s_1}(\rr d)$ and $\Sigma _{s_2}(\rr d)$ are trivially equal
to $\{ 0 \}$ when $s_1<\frac 12$ and $s_2\le \frac 12$, while any
Hermite function $h_\alpha$ fulfills \eqref{GFHarmCond} for every $h>0$,
when $0<s\le \frac 12$.

\par

Hence,
\begin{alignat*}{3}
\bsycalS _{\! s_1} &= \maclS _{s_1},&\quad
\bsySig _{s_2} &= \Sigma _{s_2},& \qquad s_1 &\ge \frac 12,\quad \phantom{0<}s_2 > \frac 12
\intertext{and}
\bsycalS _{\! s_1} &\neq \maclS _{s_1}= \{ 0\},
&\quad \bsySig _{s_2}&\neq \Sigma _{s_2} = \{ 0\} , & \qquad s_1 &<\frac 12,\quad 0<s_2
\le \frac 12.
\end{alignat*}
The space $\bsySig _s(\rr d)$ is called the \emph{Pilipovi{\'c} space (of Beurling type) of
order $s\ge 0$ on $\rr d$}.
Similarly, $\bsycalS _{\! s}(\rr d)$ is called the \emph{Pilipovi{\'c} space
(of Roumieu type) of order $s\ge 0$ on $\rr d$}.

\par

The dual spaces of $\bsycalS _{\! h,s}(\rr d)$, $\bsySig _s(\rr d)$
and $\bsycalS _{\! s}(\rr d)$ are denoted by $\bsycalS _{\! h,s}'(\rr d)$,
$\bsySig _s'(\rr d)$ and $\bsycalS _{\! s}'(\rr d)$, respectively.
We have
$$
\bsySig _s'(\rr d) = \bigcup _{h>0} \bsycalS _{\! h,s}'(\rr d)
$$
when $s>0$ and
$$
\bsycalS _{\! s}'(\rr d) = \bigcap _{h>0} \bsycalS _{\! h,s}'(\rr d)
$$
when $s\ge 0$, with inductive respective projective limit topologies
of $\bsycalS _{\! h,s}'(\rr d)$, $h>0$ (cf. \cite{To13}).

\par

\subsection{Spaces of Hermite series expansions}\label{subsec1.3}

\par

Next we recall the definitions of topological vector spaces of
Hermite series expansions, given in \cite{To13}. As in \cite{To13},
it is convenient to use the sets $\mathbf R_\flat$ and
$\overline {\mathbf R_\flat}$ when indexing our spaces.
%

\par

\begin{defn}
The sets $\mathbf R_\flat$ and $\overline {\mathbf R_\flat}$ are given by
$$
{\textstyle{\mathbf R_\flat = \mathbf R_+ \underset{\sigma >0}{\textstyle{\bigcup}}
\{ \flat _\sigma \} }}
\quad \text{and}\quad
{\textstyle{\overline {\mathbf R_\flat} = \mathbf R_\flat \bigcup \{ 0 \} }}.
$$

\par

Moreover, beside the usual ordering in $\mathbf R$, the elements $\flat _\sigma$
in $\mathbf R_\flat$ and $\overline {\mathbf R_\flat}$ are ordered by
the relations $x_1<\flat _{\sigma _1}<\flat _{\sigma _2}<x_2$, when $\sigma _1<\sigma _2$,
$x_1<\frac 12$ and $x_2\ge \frac 12$
are real. 
%
%
%
%
\end{defn}

\par

\begin{defn}\label{DefSeqSpaces}
Let $p\in (0,\infty ]$, $s\in {\mathbf R_\flat}$, $r\in \mathbf R$, $\vartheta$
be a weight on $\nn d$, and let
$$
\vartheta _{r,s}(\alpha )\equiv
\begin{cases}
e^{r|\alpha |^{\frac 1{2s}}}, & \text{when}\quad s\in \mathbf R_+,
\\[1ex]
r^{|\alpha |}(\alpha !)^{\frac 1{2\sigma}}, & \text{when}\quad s = \flat _\sigma ,
\quad \qquad \alpha \in \nn d.
\end{cases}
$$
Then,
\begin{enumerate}
\item $\ell _0' (\nn d)$ is the set of all sequences $\{c_\alpha \} _{\alpha \in \nn d}
\subseteq \mathbf C$ on $\nn d$;

\vrum

\item $\ell _{0,0}(\nn d)\equiv \{ 0\}$, and $\ell _0(\nn d)$ is the set of all sequences
$\{c_\alpha \} _{\alpha \in \nn d}\subseteq \mathbf C$ such that $c_\alpha \neq 0$
for at most finite numbers of $\alpha$;

\vrum

\item $\ell ^p_{[\vartheta ]}(\nn d)$ is the quasi-Banach space which consists of
all sequences $\{ c_\alpha \} _{\alpha \in \nn d} \subseteq \mathbf C$
such that
$$
\nm {\{ c_\alpha \} _{\alpha \in \nn d} }{\ell ^p_{[\vartheta ]}}\equiv
\nm {\{ c_\alpha \vartheta (\alpha )\} _{\alpha \in \nn d} }{\ell ^p}
$$
is finite;

\vrum

\item $\ell _{0,s}(\nn d)\equiv \underset {r>0}\bigcap \ell ^p_{[\vartheta _{r,s}]}(\nn d)$
and $\ell _s(\nn d)\equiv \underset {r>0}\bigcup \ell ^p_{[\vartheta _{r,s}]}(\nn d)$, with
projective respective inductive limit topologies of $\ell ^p_{[\vartheta _{r,s}]}(\nn d)$
with respect to $r>0$;

\vrum

\item $\ell _{0,s}'(\nn d)\equiv \underset {r>0}\bigcup
\ell ^p_{[1/\vartheta _{r,s}]}(\nn d)$ and $\ell _s'(\nn d)\equiv \underset {r>0}
\bigcap \ell ^p_{[1/\vartheta _{r,s}]}(\nn d)$, with
inductive respective projective limit topologies of $\ell ^p_{[1/\vartheta _{r,s}]}(\nn d)$
with respect to $r>0$.
\end{enumerate}
\end{defn}

\par

Let $p\in (0,\infty ]$, and let $\Omega _N$ be the set of all
$\alpha \in \nn d$ such that $|\alpha |\le N$. Then the
topology of $\ell _0(\nn d)$ is defined by the inductive
limit topology of the sets
$$
\Sets {\{ c_\alpha \} _{\alpha \in \nn d} \in \ell _0'(\nn d)}{c_\alpha =0\
\text{when}\ \alpha \neq \Omega _N}
$$
with respect to $N\ge 0$, and whose topology
is given through the quasi-norms
\begin{equation}\label{SemiNormsEllSpaces}
\{ c_\alpha \} _{\alpha \in \nn d}\mapsto \nm {\{ c_\alpha \}
_{|\alpha |\le N} }{\ell ^p(\Omega _N)},
\end{equation}
Since any two quasi-norms on a finite-dimensional vector space are equivalent,
it follows that these topologies are independent of $p$.
Furthermore, the topology of
$\ell _0' (\nn d)$ is defined by the quasi-semi-norms \eqref{SemiNormsEllSpaces}.
It follows that $\ell _0'(\nn d)$ is a Fr{\'e}chet space, and that the topology
is independent of $p$.

\par

Next we introduce spaces of formal Hermite series expansions
\begin{equation}\label{Hermiteseries}
f=\sum _{\alpha \in \nn d}c_\alpha h_\alpha ,\quad \{c_\alpha \}
_{\alpha \in \nn d}\in \ell _0' (\nn d).
\end{equation}
which correspond to
\begin{equation}\label{ellSpaces}
\ell _{0,s}(\nn d),\quad \ell _s(\nn d),\quad \ell ^p_{[\vartheta ]}(\nn d),
\quad \ell _s'(\nn d)\quad \text{and}\quad \ell _{0,s}'(\nn d).
\end{equation}
For that reason we consider the mappings
\begin{equation}\label{T1Map}
T : \,  
\{ c_\alpha \} _{\alpha \in \nn d} \mapsto \sum _{\alpha \in \nn d}
c_\alpha h_\alpha 
\end{equation}
between sequences and formal Hermite series expansions.

\par

\begin{defn}\label{DefclHclASpaces}
Let $p\in (0,\infty ]$, $\vartheta$ be a weight on $\nn d$, and let
$s\in \overline{\mathbf R_\flat}$.
\begin{itemize}
\item the spaces
\begin{equation}\label{clHSpaces}
\maclH _{0,s}(\rr d),\quad \maclH _s(\rr d),\quad \maclH ^p_{[\vartheta ]}(\rr d),
\quad \maclH _s'(\rr d)\quad \text{and}\quad \maclH _{0,s}'(\rr d)
\end{equation}
are the images of $T$ in \eqref{T1Map} under corresponding spaces in
\eqref{ellSpaces}.
Furthermore, the topologies of the spaces in \eqref{clHSpaces}
are inherited from corresponding spaces in \eqref{ellSpaces}.

\vrum

\item the quasi-norm $\nm f{\maclH ^p_{[\vartheta ]}}$ of $f\in \maclH _0'(\rr d)$, 
is given by $\nm {\{ c_\alpha \} _{\alpha \in \nn d}}{\ell ^p_{[\vartheta ]}}$,
when $f$ is given by \eqref{Hermiteseries}.
\end{itemize}
\end{defn}

\par

By the definitions it follows that the inclusions
\begin{multline}\label{inclHermExpSpaces}
\maclH _0(\rr d)\subseteq \maclH _{0,s}(\rr d)\subseteq
\maclH _{s}(\rr d) \subseteq \maclH _{0,t}(\rr d)
\\[1ex]
\subseteq \mascS (\rr d) \subseteq \mascS '(\rr d) \subseteq
\maclH _{0,t}'(\rr d) \subseteq \maclH _{s}'(\rr d)
\\[1ex]
\subseteq \maclH _{0,s}'(\rr d)\subseteq \maclH _0'(\rr d),
\quad \text{when}\ s,t\in \mathbf R_\flat ,\ s<t
\end{multline}
hold true.

\par

\begin{rem}\label{Rem:LinkEllHs}
By the definition it follows that $T$ in \eqref{T1Map} is a homeomorphism
between any of the spaces in \eqref{ellSpaces} and corresponding space
in \eqref{clHSpaces}.
\end{rem}

\par

The next result shows that the spaces in Definition \ref{clHSpaces}
essentially agrees with the Pilipovi{\'c} spaces. We refer to \cite{To13}
for the proof.

\par

\begin{prop}
Let $0\le s\in \mathbf R$. Then $\maclH _{0,s}(\rr d) = \bsySig _s(\rr d)$
and $\maclH _s(\rr d) = \bsycalS _s(\rr d)$.
\end{prop}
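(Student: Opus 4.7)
The plan is to establish both equalities via the spectral identity $Hh_\alpha = (2|\alpha|+d)h_\alpha$, which is the natural bridge between the harmonic-oscillator-style estimates defining $\bsycalS_s$ and $\bsySig_s$ and the Hermite-coefficient estimates defining $\maclH_s$ and $\maclH_{0,s}$.

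First I would prove $\maclH_s(\rr d)\subseteq \bsycalS_s(\rr d)$. Given $\{c_\alpha\}\in \ell_s(\nn d)$, so $|c_\alpha|\le Ce^{-r|\alpha|^{1/(2s)}}$ for some $r>0$, set $f=\sum c_\alpha h_\alpha$. Since $H$ acts diagonally on Hermite functions, termwise differentiation gives
$$
H^Nf = \sum_\alpha c_\alpha (2|\alpha|+d)^N h_\alpha .
$$
Combining the known polynomial bound $\nm{h_\alpha}{L^\infty}\lesssim \eabs{\alpha}^{d/4}$ with the elementary inequality
$$
\sup_{t\ge 0} t^N e^{-rt^{1/(2s)}} \le (C_1N)^{2sN}\lesssim h^N(N!)^{2s},
$$
one reads off $\nm{H^Nf}{L^\infty}\le h_1^N(N!)^{2s}$ for an $h_1$ depending on $r$, hence $f\in \bsycalS_s(\rr d)$. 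The same estimate carried out for arbitrary $r>0$ delivers $\maclH_{0,s}\subseteq \bsySig_s$.

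For the reverse inclusion $\bsycalS_s(\rr d)\subseteq \maclH_s(\rr d)$, take $f\in \bsycalS_s$. The first step is to check that $f$ admits a convergent Hermite expansion with coefficients $c_\alpha=\langle f,h_\alpha\rangle_{L^2}$; this follows from the fact that the $|x|^2$-part of $H$ forces $f\in L^2(\rr d)$ once $Hf\in L^\infty$, and then Hermite functions form an orthonormal basis. Self-adjointness of $H$ then yields
$$
(2|\alpha|+d)^N c_\alpha = \langle H^Nf,h_\alpha\rangle_{L^2},
$$
so that
$$
|c_\alpha|(2|\alpha|+d)^N \le \nm{H^Nf}{L^\infty}\nm{h_\alpha}{L^1}
\lesssim \eabs{\alpha}^{d/4}h^N(N!)^{2s}.
$$
Optimizing $N\asymp c(2|\alpha|+d)^{1/(2s)}$ and applying Stirling converts this bound into $|c_\alpha|\lesssim e^{-r|\alpha|^{1/(2s)}}$ for some $r>0$, proving $\{c_\alpha\}\in \ell_s(\nn d)$. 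Running the same argument uniformly in $h$ gives $\bsySig_s\subseteq \maclH_{0,s}$.

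The main obstacles are technical: (i) justifying that every $f\in\bsycalS_s$ (respectively $\bsySig_s$) belongs to $L^2(\rr d)$ and is thus represented by its Hermite series, which is delicate for small $s$ (in particular $s<\tfrac12$, where $\bsycalS_s$ is strictly larger than the trivial Gelfand-Shilov space $\maclS_s=\{0\}$), and (ii) the Stirling-type optimization converting $|c_\alpha|\lesssim h^N(N!)^{2s}(2|\alpha|+d)^{-N}$ into the required sub-exponential decay $e^{-r|\alpha|^{1/(2s)}}$. The borderline case $s=0$ requires a separate but easy observation: the estimate $\nm{H^Nf}{L^\infty}\le h^N$ forces $c_\alpha=0$ for $2|\alpha|+d>h$, so $f$ is a finite linear combination of Hermite functions, matching the description of $\bsycalS_0(\rr d)$ and $\ell_0(\nn d)$.
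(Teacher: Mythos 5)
The paper gives no proof here, simply referring to \cite{To13}, so there is no in-text argument to compare against; but the route you chose — converting between $H^N$-estimates and Hermite-coefficient decay via the spectral identity $Hh_\alpha=(2|\alpha|+d)h_\alpha$ together with the Stirling optimization $\sup_t t^Ne^{-rt^{1/(2s)}}\asymp h^N(N!)^{2s}$ — is indeed the natural direct one, and your treatment of the ``easy'' inclusion $\maclH_s\subseteq\bsycalS_s$ (and likewise $\maclH_{0,s}\subseteq\bsySig_s$, and the $s=0$ case) is essentially complete.

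The genuine gap is in the reverse inclusion $\bsycalS_s\subseteq\maclH_s$, at the exact point you flag. The assertion ``the $|x|^2$-part of $H$ forces $f\in L^2(\rr d)$ once $Hf\in L^\infty$'' is not a valid argument: from $f\in L^\infty$ and $Hf\in L^\infty$ alone one only learns that $|x|^2f-\Delta f$ is bounded, and the two terms can cancel, so no decay of $f$ follows pointwise. Moreover, to pass from $H^Nf$ to the Hermite coefficients you also use $\langle H^Nf,h_\alpha\rangle=(2|\alpha|+d)^N\langle f,h_\alpha\rangle$, which requires integrating $\Delta$ by parts $N$ times; for that one must first control the growth of $\nabla(H^{N-1}f)$ (e.g.\ via interior elliptic estimates, using that $\Delta(H^{N-1}f)=|x|^2H^{N-1}f-H^Nf$ has at most polynomial growth). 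Once the coefficient bounds $|c_\alpha|\lesssim e^{-r|\alpha|^{1/(2s)}}$ are established (with $c_\alpha=\langle f,h_\alpha\rangle$ defined merely as an $L^\infty$--$L^1$ pairing), the series $\tilde f=\sum c_\alpha h_\alpha$ converges in $\maclH_s$, but you still have to prove $f=\tilde f$: this follows because $g=f-\tilde f$ lies in $L^\infty=(L^1)'$, annihilates every $h_\alpha$, and the span of the Hermite functions is dense in $L^1$. None of these three steps is prohibitively hard, but none of them is supplied by the phrase you offer, so as written the proof of the reverse inclusion does not close.
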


\par

\begin{rem}\label{Rem:GelfandTripp}
Let $T$ be given by \eqref{T1Map}. Then
\begin{equation*} 
\begin{alignedat}{2}
\big ( \ell _s(\nn d), \ell ^2(\nn d),\ell _s'(\nn d) \big ) \ & \overset
{\overset{\scriptstyle{T}}{}} {\rightarrow } &\
\big ( \maclH _s(\rr d), L^2(\rr d),\maclH _s'(\rr d) \big ),\ s &\ge 0,
\\[1ex]
\big ( \ell _{0,s}(\nn d), \ell ^2(\nn d),\ell _{0,s}'(\nn d) \big ) \ & \overset
{\overset{\scriptstyle{T}}{}} {\rightarrow } \ &
\big ( \maclH _{0,s}(\rr d), L^2(\rr d),\maclH _{0,s}'(\rr d) \big ),\ s &>0,
\\[1ex]
\big ( \ell _{[\vartheta]}^p(\nn d), \ell ^2(\nn d),\ell _{[1/\vartheta]}^{p'}(\nn d) \big ) \ & \overset
{\overset{\scriptstyle{T}}{}} {\rightarrow } \ &
\big ( \maclH _{[\vartheta ]}^p(\rr d), L^2(\rr d),\maclH _{[1/\vartheta ]}^{p'}(\rr d) \big ),\
p &\in [1,\infty )
\end{alignedat}
\end{equation*}
are isometric bijections between Gelfand triples. (Cf. e.{\,}g. Section 2 in \cite{To13}.)
\end{rem}

\par

Finally, in Section \ref{sec5} we apply the results from the first sections to obtain certain
characterizations of operators with kernels in $\maclH _s$ and $\maclH _{0,s}$.

%

%
%
%
%
%

\par

\section{Kernel theorems}\label{sec2}

\par

In this section we deduce suitable kernel theorems for operators between
Pilipovi{\'c} spaces and their duals.
Since the spaces
under considerations can in convenient ways be formulated in terms of
Hermite series expansions, we may easily reduce ourselves to kernel
results for matrix operators, in similar ways as in e.{\,}g. \cite{ReSi}. We
therefore begin with the latter case.

\par

\begin{prop}\label{Prop:DiscreteKernelThm}
Let $\vartheta _k$ be weight functions on
$\nn {d_k}$, $k=1,2$, $\vartheta (\alpha ,\beta )=
\vartheta _1(\beta )^{-1}\vartheta _2(\alpha )$,
and let $T$ be a linear and continuous map from $\ell ^1_{[\vartheta _1]}(\nn {d_1})$
to $\ell ^\infty _{[\vartheta _2]}(\nn {d_2})$. Then the following is true:
\begin{enumerate}
\item If $A\in \mathbb \ell ^\infty _{[\vartheta ]}(\nn {d_2+d_1})$, then the map
$f\mapsto A\cdot f$ from $\ell _0(\nn {d_1})$ to $\ell _0'(\nn {d_2})$ extends uniquely to
a linear and continuous map from $\ell ^1_{[\vartheta _1]}(\nn {d_1})$
to $\ell ^\infty _{[\vartheta _2]}(\nn {d_2})$;

\vrum

\item there is a unique element $A\in \ell ^\infty _{[\vartheta ]}(\nn {d_2+d_1})$
such that $Tf =A\cdot f$ for every $f\in \ell ^1_{[\vartheta _1]}(\nn {d_1})$. Furthermore,
\begin{equation}\label{Eq:TANormsIdent}
\nm T{\ell ^1_{[\vartheta _1]}(\nn {d_1})\to \ell ^\infty _{[\vartheta _2]}(\nn {d_2})}
=
\nm A{\ell ^\infty _{[\vartheta ]}}.
\end{equation}
\end{enumerate}
\end{prop}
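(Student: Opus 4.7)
The plan is to prove this by the standard matrix-kernel argument for $\ell^1 \to \ell^\infty$ maps, where the weights are simply absorbed into the pointwise estimate.

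For part (1), given $A\in \ell ^\infty _{[\vartheta ]}(\nn {d_2+d_1})$ and $f\in \ell _0(\nn {d_1})$, the product $(A\cdot f)_\alpha = \sum _\beta A_{\alpha ,\beta} f_\beta$ is a finite sum, and I would estimate directly
$$
|(A\cdot f)_\alpha |\vartheta _2(\alpha ) \le \sum _{\beta} \big ( \vartheta _2(\alpha ) \vartheta _1(\beta )^{-1} |A_{\alpha ,\beta}| \big ) \big ( |f_\beta |\vartheta _1(\beta ) \big ) \le \nm A{\ell ^\infty _{[\vartheta ]}}\nm f{\ell ^1_{[\vartheta _1]}}.
$$
Taking the supremum over $\alpha$ shows that the map $f\mapsto A\cdot f$ is bounded from $\ell _0(\nn {d_1})$ into $\ell ^\infty _{[\vartheta _2]}(\nn {d_2})$ with operator norm bounded by $\nm A{\ell ^\infty _{[\vartheta ]}}$. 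Since $\ell _0(\nn {d_1})$ is dense in $\ell ^1_{[\vartheta _1]}(\nn {d_1})$ (truncate to finite support and use absolute summability of $\sum _\beta |f_\beta |\vartheta _1(\beta )$), the unique continuous extension to all of $\ell ^1_{[\vartheta _1]}(\nn {d_1})$ follows at once.

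For part (2), I would set $A_{\alpha ,\beta }\equiv (Te_\beta )_\alpha$, where $\{e_\beta \} _{\beta \in \nn {d_1}}$ denote the standard unit sequences, so that $\nm {e_\beta}{\ell ^1_{[\vartheta _1]}}=\vartheta _1(\beta )$. Continuity of $T$ then yields
$$
|A_{\alpha ,\beta}|\vartheta _2(\alpha ) = |(Te_\beta )_\alpha |\vartheta _2(\alpha ) \le \nm {Te_\beta}{\ell ^\infty _{[\vartheta _2]}} \le \nmm T \cdot \vartheta _1(\beta ),
$$
whence $\nm A{\ell ^\infty _{[\vartheta ]}}\le \nmm T$, and in particular $A\in \ell ^\infty _{[\vartheta ]}(\nn {d_2+d_1})$. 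By linearity $Tf = A\cdot f$ holds on $\ell _0(\nn {d_1})$; since both sides are continuous from $\ell ^1_{[\vartheta _1]}(\nn {d_1})$ to $\ell ^\infty _{[\vartheta _2]}(\nn {d_2})$ (the right-hand side by part (1)) and $\ell _0(\nn {d_1})$ is dense, the identity extends to all of $\ell ^1_{[\vartheta _1]}(\nn {d_1})$. Uniqueness of $A$ is immediate from $A\cdot e_\beta = Te_\beta$ for every $\beta$.

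The norm identity \eqref{Eq:TANormsIdent} then falls out by combining the two opposite inequalities already established: part (1) gives $\nmm T \le \nm A{\ell ^\infty _{[\vartheta ]}}$, while the construction in part (2) supplies $\nm A{\ell ^\infty _{[\vartheta ]}}\le \nmm T$. There is no genuine obstacle in the argument; the only point deserving care is confirming density of $\ell _0(\nn {d_1})$ in the weighted $\ell ^1$-space, which is standard as soon as one notes that the weights under consideration are strictly positive.
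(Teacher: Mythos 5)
Your argument is correct and follows essentially the same route as the paper: both extract the matrix entries $a_{\alpha,\beta}=(Te_\beta)_\alpha$, estimate them using continuity of $T$ against the normalized unit vectors, and combine with the elementary bound from part (1) and density of $\ell_0$ to obtain $Tf=A\cdot f$ and the norm identity. The only cosmetic difference is that the paper packages the norm computation into a single chain of equalities via the dual pairing $(A\cdot f_1,f_2)_{\ell^2}$, whereas you split it into the two opposite inequalities; the underlying mechanism (testing on $e_\beta$) is the same.
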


\par

\begin{proof}
The assertion (1) follows by straight-forward estimates and is left for the reader.

\par

(2) Evidently, for some unique
$$ 
A= (a_{\alpha ,\beta})_{(\alpha , \beta )\in \nn {d_2+d_1}}\in \ell _0'(\nn {d_2+d_1}),
$$
$Tf =A\cdot f$ holds for every $f\in \ell _0(\nn {d_1})$. Moreover, let
\begin{align*}
\Omega _1 &= \sets{f_1\in \ell _0(\nn {d_1})}{\nm {f_1}{\ell ^1_{[\vartheta _1]}}\le 1}
\intertext{and}
\Omega _2 &= \sets{f_2\in \ell _0(\nn {d_1})}{\nm {f_2}{\ell ^1_{[1/\vartheta _2]}}\le 1}.
\end{align*}
Since $\ell _0(\nn {d_1})$ and $\ell _0(\nn {d_2})$ are dense in
$\ell ^1_{[\vartheta _1]}(\nn {d_1})$ and $\ell ^1_{[1/\vartheta _2]}(\nn {d_2})$,
respectively, we obtain
\begin{multline*}
\nm T{\ell ^1_{[\vartheta _1]}(\nn {d_1})\to \ell ^\infty _{[\vartheta _2]}(\nn {d_2})}
=
\sup _{f_1\in \Omega _1}\sup _{f_2\in \Omega _2} |(A\cdot f_1,f_2)_{\ell ^2}|
\\[1ex]
=
\sup _{f_1\in \Omega _1}\nm {A\cdot f_1}{\ell ^\infty _{[\vartheta _2]}}
=
\sup _{\beta \in \nn {d_1}}\sup _{\alpha \in \nn {d_2}}|a_{\alpha ,\beta}\vartheta
_1(\beta )^{-1}\vartheta _2(\alpha )|
=
\nm A{\ell ^\infty _{[\vartheta ]}},
\end{multline*}
which gives (2).
\end{proof}

\par

By the links between $\maclH ^p_{[\vartheta _k]}(\rr {d_k})$ and
$\maclH ^p_{[\vartheta ]}(\rr {d_2}\times \rr {d_1})$, and
$\ell ^p_{[\vartheta _k]}(\nn {d_k})$ and
$\ell ^p_{[\vartheta ]}(\nn {d_2}\times \nn {d_1})$, respectively,
the previous proposition immediately gives the following.
(Cf. Remark \ref{Rem:GelfandTripp}.)

\par

\begin{prop}\label{Prop:HermiteKernelThm}
Let $\vartheta _k$ be weight functions on
$\nn {d_k}$, $k=1,2$, $\vartheta (\alpha _2,\beta )= \vartheta
_1(\beta )^{-1}\vartheta _2(\alpha _2)$,
and let $T$ be a linear and continuous map from $\maclH ^1_{[\vartheta _1]}(\rr {d_1})$
to $\maclH ^\infty _{[\vartheta _2]}(\rr {d_2})$. Then the following is true:
\begin{enumerate}
\item If $K\in \maclH ^\infty _{[\vartheta ]}(\rr {d_2}\times \rr {d_1})$, then the map
\begin{equation}\label{Kmap}
f\mapsto \big ( x_2\mapsto \scal {K(x_2,\cdo)}f \big )
\end{equation}
from $\maclH _0(\rr {d_1})$ to $\maclH _0'(\rr {d_2})$ extends uniquely to
a linear and continuous map from $\maclH ^1_{[\vartheta _1]}(\rr {d_1})$
to $\maclH ^\infty _{[\vartheta _2]}(\rr {d_2})$;

\vrum

\item there is a unique element $K\in \maclH ^\infty _{[\vartheta ]}(\rr {d_2}\times \rr {d_1})$
such that
\begin{equation}\label{Kmap2}
Tf =\big ( x_2\mapsto \scal {K(x_2,\cdo)}f \big )
\end{equation}
for every $f\in \maclH ^1_{[\vartheta _1]}(\rr {d_1})$. Furthermore,
\begin{equation}\label{Eq:TANormsIdent2}
\nm T{\maclH ^1_{[\vartheta _1]}(\rr {d_1})\to \maclH ^\infty _{[\vartheta _2]}(\rr {d_2})}
=
\nm K{\maclH ^\infty _{[\vartheta ]}}.
\end{equation}
\end{enumerate}
\end{prop}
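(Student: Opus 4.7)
The plan is to deduce Proposition \ref{Prop:HermiteKernelThm} by transporting Proposition \ref{Prop:DiscreteKernelThm} through the Hermite-coefficient isomorphism. Let $\Phi$ denote the map in \eqref{T1Map}. By Remark \ref{Rem:LinkEllHs}, applied both to $\nn{d_1}$, $\nn{d_2}$ separately and to $\nn{d_2+d_1}\simeq \nn{d_2}\times \nn{d_1}$ using the tensor basis $h_{(\alpha ,\beta )}=h_\alpha \otimes h_\beta$, $\Phi$ induces isometric bijections between each of the sequence spaces in Definition \ref{DefSeqSpaces} and the corresponding Hermite-series spaces of Definition \ref{DefclHclASpaces}. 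In particular the norm identities $\nm f{\maclH ^p_{[\vartheta ]}}=\nm {\{c_\alpha \}}{\ell ^p_{[\vartheta ]}}$ are built into the definitions.

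I would then conjugate: set $\widetilde T=\Phi ^{-1}\circ T\circ \Phi \colon \ell ^1_{[\vartheta _1]}(\nn {d_1})\to \ell ^\infty _{[\vartheta _2]}(\nn {d_2})$, which is continuous with the same operator norm as $T$. Proposition \ref{Prop:DiscreteKernelThm}(2) supplies a unique matrix $A=(a_{\alpha ,\beta })\in \ell ^\infty _{[\vartheta ]}(\nn {d_2+d_1})$ with $\widetilde T c=A\cdot c$ and $\nm A{\ell ^\infty _{[\vartheta ]}}=\nm {\widetilde T}{\ell ^1_{[\vartheta _1]}\to \ell ^\infty _{[\vartheta _2]}}$. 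The kernel
$$
K(x_2,x_1)=\sum _{\alpha ,\beta }a_{\alpha ,\beta }\, h_\alpha (x_2)h_\beta (x_1)
$$
then lies in $\maclH ^\infty _{[\vartheta ]}(\rr {d_2}\times \rr {d_1})$ with $\nm K{\maclH ^\infty _{[\vartheta ]}}=\nm A{\ell ^\infty _{[\vartheta ]}}$. Part (1) follows by the same transfer, using Proposition \ref{Prop:DiscreteKernelThm}(1) in place of (2).

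For the pairing formula \eqref{Kmap2}, I would verify it first on the dense subspace $\maclH _0(\rr {d_1})$: if $f=\sum _\beta c_\beta h_\beta$ has only finitely many nonzero $c_\beta$, then expanding gives
$$
\scal {K(x_2,\cdo )}f=\sum _\alpha h_\alpha (x_2)\sum _\beta a_{\alpha ,\beta }c_\beta = \Phi (A\cdot c)(x_2)= (Tf)(x_2).
$$
Since $\maclH _0(\rr {d_1})$ is dense in $\maclH ^1_{[\vartheta _1]}(\rr {d_1})$, and both sides of \eqref{Kmap2} are continuous maps $\maclH ^1_{[\vartheta _1]}\to \maclH ^\infty _{[\vartheta _2]}$, the identity extends by continuity. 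Uniqueness of $K$ and the norm identity \eqref{Eq:TANormsIdent2} are direct translations of the corresponding sequence-space assertions through $\Phi$.

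The main obstacle is conceptual rather than technical: one must give the fibre pairing $\scal {K(x_2,\cdo )}f$ an unambiguous meaning, since for generic $f$ in the large space $\maclH ^1_{[\vartheta _1]}$ the slice $K(x_2,\cdo )$ is only controlled through its Hermite coefficients. The cleanest remedy is to \emph{define} this pairing by the series $\sum _\alpha h_\alpha (x_2)\sum _\beta a_{\alpha ,\beta }c_\beta$ forced by the identification via $\Phi$; this is unambiguous, defines an element of $\maclH ^\infty _{[\vartheta _2]}(\rr {d_2})$, and coincides with the classical $L^2$ pairing on $\maclH _0$, which is what the density argument needs.
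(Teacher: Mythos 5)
Your argument is exactly the one the paper intends: the paper disposes of this proposition in one sentence ("By the links between $\maclH ^p_{[\vartheta _k]}$ and $\ell ^p_{[\vartheta _k]}$... the previous proposition immediately gives the following"), and your write-up is that transfer through the Hermite-coefficient isomorphism $\Phi$ carried out in detail, including the correct identification of the kernel with the matrix $A$ from Proposition \ref{Prop:DiscreteKernelThm} and the density argument to extend \eqref{Kmap2} from $\maclH _0$. Your closing caveat about how to interpret the pairing $\scal {K(x_2,\cdo )}f$ for general $f\in \maclH ^1_{[\vartheta _1]}$ is a fair and useful clarification of a point the paper leaves implicit.
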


\par

We have now the following kernel results.

\par

%
%
%
%
%
%
%
%
%
%

\par

\begin{thm}\label{Thm:KernelsDifficultDirection}
Let $s\in \overline{\mathbf R_\flat}$, and let $T$ be the linear and continuous map 
from $\maclH _0(\rr {d_1})$ to $\maclH _0'(\rr {d_2})$, given by
\eqref{Kmap}. Then the following is true:
%
%
%
%
%
\begin{enumerate}
\item if $T$ is a linear and continuous map from $\maclH _s'(\rr {d_1})$ to
$\maclH _s(\rr {d_2})$, then there is
$K\in \maclH _s(\rr {d_2}\times \rr {d_1})$ such that \eqref{Kmap2} holds true;

\vrum

\item if $T$ is a linear and continuous map from $\maclH _s(\rr {d_1})$ to
$\maclH _s'(\rr {d_2})$, then there is
$K\in \maclH _s'(\rr {d_2}\times \rr {d_1})$ such that \eqref{Kmap2} holds true.
\end{enumerate}

\par

The same holds true if the $\maclH _s$ and $\maclH _s'$ spaces are replaced
by $\maclH _{0,s}$ and $\maclH _{0,s}'$ spaces, respectively, or by
$\mascS$ and $\mascS '$ spaces, respectively.
\end{thm}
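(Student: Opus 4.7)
The plan is, in each case, to reduce the given hypothesis to a continuity of the form already handled by Proposition \ref{Prop:HermiteKernelThm} between \emph{weighted} spaces $\maclH^1_{[\vartheta_1]}$ and $\maclH^\infty_{[\vartheta_2]}$. Once that is done, the proposition furnishes a unique kernel $K\in\maclH^\infty_{[\vartheta]}(\rr{d_2}\times\rr{d_1})$ with $\vartheta(\alpha,\beta)=\vartheta_1(\beta)^{-1}\vartheta_2(\alpha)$, and by uniqueness together with the density of $\maclH_0$ this $K$ must coincide with the already known kernel of $T:\maclH_0\to\maclH_0'$. The concrete form of $\vartheta_1,\vartheta_2$ then tells us exactly in which Pilipovi\'c space $K$ lives; in other words, the whole theorem reduces to extracting a quantitative two-sided decay (resp.\ growth) estimate on the matrix coefficients $c_{\alpha,\beta}$ of $T$ in the Hermite basis.

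I illustrate this on Part (1), Roumieu case $s\in\mathbf R_+$, which carries the main technical step. The source $\maclH_s'(\rr{d_1})$ is Fr\'echet (a countable projective limit of the Banach spaces $\maclH^\infty_{[1/\vartheta_{1/n,s}]}$), while the target $\maclH_s(\rr{d_2})$ is a regular (LF)\nobreakdash-space, in fact a (DFS)\nobreakdash-space. By Grothendieck's factorization theorem there is $n$ such that $T$ maps $\maclH_s'(\rr{d_1})$ continuously into the single Banach step $\maclH^\infty_{[\vartheta_{1/n,s}]}(\rr{d_2})$, and the projective description of the Fr\'echet topology on $\maclH_s'(\rr{d_1})$ then yields $m\in\mathbf N$ and $C>0$ with
\[
\|Tf\|_{\maclH^\infty_{[\vartheta_{1/n,s}]}}\le C\|f\|_{\maclH^\infty_{[1/\vartheta_{1/m,s}]}},\qquad f\in\maclH_s'(\rr{d_1}).
\]
Since $\|\cdot\|_{\maclH^\infty_{[\vartheta]}}\le\|\cdot\|_{\maclH^1_{[\vartheta]}}$, the same estimate is valid with the source norm replaced by $\|\cdot\|_{\maclH^1_{[1/\vartheta_{1/m,s}]}}$; Proposition \ref{Prop:HermiteKernelThm}(2), applied with $\vartheta_1=1/\vartheta_{1/m,s}$ and $\vartheta_2=\vartheta_{1/n,s}$, then produces a kernel whose coefficients obey $|c_{\alpha,\beta}|\lesssim e^{-|\alpha|^{1/(2s)}/n-|\beta|^{1/(2s)}/m}$. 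A standard power-mean inequality (whose non-trivial side depends on whether $1/(2s)\le 1$ or $>1$) converts this to $|c_{\alpha,\beta}|\lesssim e^{-c(|\alpha|+|\beta|)^{1/(2s)}}$ for some $c>0$, which is precisely $K\in\maclH_s(\rr{d_2+d_1})$.

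The five remaining variations follow the same recipe, and which side is Fr\'echet and which is (LF) dictates whether Grothendieck's theorem is needed or the estimate is essentially free. In the Beurling analogue of (1) the source $\maclH_{0,s}'$ is (LF) and the target $\maclH_{0,s}$ is Fr\'echet, so continuity gives the above key estimate for \emph{every} pair $(n,m)$, producing arbitrarily fast exponential decay of $\{c_{\alpha,\beta}\}$ and hence $K\in\maclH_{0,s}$. Part (2) is dual: the computation yields $|c_{\alpha,\beta}|\lesssim e^{r(|\alpha|+|\beta|)^{1/(2s)}}$, for some (respectively every) $r>0$ in the Roumieu (respectively Beurling) setting, which is precisely membership of $K$ in $\maclH_s'$ (respectively $\maclH_{0,s}'$). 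The $s=\flat_\sigma$ cases are identical after substituting the weight $e^{r|\alpha|^{1/(2s)}}$ by $r^{|\alpha|}(\alpha!)^{1/(2\sigma)}$ and using the elementary $(\alpha+\beta)!\le 2^{|\alpha|+|\beta|}\alpha!\beta!$ in place of the power-mean step; the $\mascS/\mascS'$ case uses polynomial weights $\eabs{\alpha}^N$ and no combinatorial input at all, recovering the classical Schwartz kernel theorem.

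The main obstacle is the appeal to Grothendieck's factorization theorem, whose applicability rests on regularity of the (LF)\nobreakdash-structure on $\maclH_s$ and on $\maclH_{0,s}'$. This is guaranteed because the inclusions between the defining Banach steps are compact, i.e.\ the spaces in question are (DFS) (in fact (DFN)), a fact visible directly from the Hermite-coefficient characterization used throughout Section \ref{sec1}. Once that technical point is in place, the proof reduces to a systematic unwinding of the two types of limit topology together with the routine weight manipulations above.
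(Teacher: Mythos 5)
Your proposal is correct and follows essentially the same route as the paper's (very terse) proof: represent $\maclH_s$, $\maclH_s'$, $\maclH_{0,s}$, $\maclH_{0,s}'$, $\mascS$, $\mascS'$ as unions or intersections of the weighted steps $\maclH^p_{[\vartheta_r]}$ (respectively $\maclH^p_{[1/\vartheta_r]}$, $\maclH^p_{[\sigma_r]}$), reduce continuity to a single Banach-to-Banach estimate, and invoke Proposition~\ref{Prop:HermiteKernelThm} to identify the kernel inside one $\maclH^\infty_{[\vartheta]}$. You spell out the Grothendieck factorization step that the paper leaves implicit in the phrase ``with suitable inductive limit topologies,'' which is a genuine and useful addition. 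One small correction: in the $s=\flat_\sigma$ case you do not need $(\alpha+\beta)!\le 2^{|\alpha|+|\beta|}\alpha!\beta!$. The relevant multi-index on $\rr{d_2}\times\rr{d_1}$ is the \emph{concatenation} $\gamma=(\alpha,\beta)\in\nn{d_2+d_1}$, for which $\gamma!=\alpha!\,\beta!$ exactly, so the weight $r^{|\gamma|}(\gamma!)^{1/(2\sigma)}$ factors perfectly and no combinatorial inequality is needed.
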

\par

\begin{thm}\label{Thm:KernelsEasyDirection}
Let $K\in \maclH _0'(\rr {d_2}\times \rr {d_1})$, $s\in \overline{\mathbf R_\flat}$
and let $T$ be the linear and continuous map from $\maclH _0(\rr {d_1})$ to
$\maclH _0'(\rr {d_2})$, given by \eqref{Kmap}. Then the following is true:
\begin{enumerate}
\item if $K\in \maclH _s(\rr {d_2}\times \rr {d_1})$, then $T$ extends uniquely
to linear and continuous mappings from $\maclH _s'(\rr {d_1})$ to
$\maclH _s(\rr {d_2})$;

\vrum

\item if $K\in \maclH _s'(\rr {d_2}\times \rr {d_1})$, then $T$ extends uniquely
to linear and continuous mappings from $\maclH _s(\rr {d_1})$ to
$\maclH _s'(\rr {d_2})$.
\end{enumerate}

\par

The same holds true if the $\maclH _s$ and $\maclH _s'$ spaces are replaced
by $\maclH _{0,s}$ and $\maclH _{0,s}'$ spaces, respectively, or by
$\mascS$ and $\mascS '$ spaces, respectively.
\end{thm}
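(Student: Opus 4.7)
The plan is to expand the kernel in a Hermite double series and reduce to Proposition \ref{Prop:HermiteKernelThm}. Write
\begin{equation*}
K(x_2,x_1) = \sum_{\alpha \in \nn{d_2},\, \beta \in \nn{d_1}} a_{\alpha,\beta}\, h_\alpha(x_2) h_\beta(x_1),
\end{equation*}
so that the hypotheses on $K$ translate into estimates on $|a_{\alpha,\beta}|$ in terms of the two-variable weights $\vartheta_{r,s}$ on $\nn{d_1+d_2}$, while $(Tf)_\alpha = \sum_\beta a_{\alpha,\beta} b_\beta$ whenever $f = \sum_\beta b_\beta h_\beta \in \maclH_0(\rr{d_1})$.

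The key observation is that these two-variable weights are equivalent, up to rescaling $r$, to tensor product weights:
\begin{equation*}
\vartheta_{r/C,s}(\alpha)\,\vartheta_{r/C,s}(\beta) \;\leq\; \vartheta_{r,s}(\alpha,\beta) \;\leq\; \vartheta_{Cr,s}(\alpha)\,\vartheta_{Cr,s}(\beta)
\end{equation*}
for some $C = C(s) \geq 1$. For $s = \flat_\sigma$ this is an equality with $C = 1$, since $(\alpha,\beta)! = \alpha!\,\beta!$ and $|(\alpha,\beta)| = |\alpha|+|\beta|$; for real $s > 0$ it follows from elementary inequalities comparing $(|\alpha|+|\beta|)^{1/(2s)}$ with $|\alpha|^{1/(2s)} + |\beta|^{1/(2s)}$ (subadditivity of $t \mapsto t^{1/(2s)}$ when $s \geq 1/2$, superadditivity when $s < 1/2$, together with the trivial converse bound $\max(|\alpha|,|\beta|) \leq |\alpha|+|\beta|$).

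With this factorization, both cases follow from Proposition \ref{Prop:HermiteKernelThm}. In case (1), $K \in \maclH_s$ gives $|a_{\alpha,\beta}| \lesssim \vartheta_{r,s}(\alpha)^{-1} \vartheta_{r,s}(\beta)^{-1}$ for some $r > 0$; choosing $\vartheta_1 = 1/\vartheta_{r,s}$ and $\vartheta_2 = \vartheta_{r,s}$ in Proposition \ref{Prop:HermiteKernelThm} gives continuity of $T : \maclH^1_{[1/\vartheta_{r,s}]}(\rr{d_1}) \to \maclH^\infty_{[\vartheta_{r,s}]}(\rr{d_2})$, and composing with the canonical continuous inclusions $\maclH_s'(\rr{d_1}) \hookrightarrow \maclH^1_{[1/\vartheta_{r,s}]}(\rr{d_1})$ (from the projective limit structure) and $\maclH^\infty_{[\vartheta_{r,s}]}(\rr{d_2}) \hookrightarrow \maclH_s(\rr{d_2})$ (into the inductive limit) yields the desired continuous extension. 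In case (2), $K \in \maclH_s'$ gives the factored estimate for every $r > 0$; given $f \in \maclH^\infty_{[\vartheta_{r_0,s}]}(\rr{d_1})$ and an arbitrary target parameter $r' > 0$, one picks $r = \min(r', r_0/2) < r_0$ and applies Proposition \ref{Prop:HermiteKernelThm} with $\vartheta_1 = \vartheta_{r,s}$, $\vartheta_2 = 1/\vartheta_{r,s}$ to conclude $Tf \in \maclH^\infty_{[1/\vartheta_{r,s}]}(\rr{d_2}) \subseteq \maclH^\infty_{[1/\vartheta_{r',s}]}(\rr{d_2})$ (the latter inclusion holds since $r \leq r'$); as $r'$ was arbitrary, $Tf \in \maclH_s'(\rr{d_2})$.

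Uniqueness of the extension is immediate from density of $\maclH_0$ in each source space. The Beurling variant is obtained by interchanging the quantifiers ``some $r$'' and ``every $r$'' throughout, and the Schwartz version by the same argument with polynomial weights $\eabs{\alpha}^N$, for which the tensor factorization is trivial. The main obstacle is the careful matching of the parameter $r$ when passing from the single-Banach-space statement of Proposition \ref{Prop:HermiteKernelThm} to the inductive/projective limit topologies of the Pilipovi\'c spaces; once the tensor factorization of the weights and the monotonicity of $\vartheta_{r,s}$ in $r$ are in place, each subcase reduces to the same pattern.
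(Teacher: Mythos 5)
Your proof is correct and follows essentially the same strategy as the paper's: reduce to the single-Banach-space kernel theorem (Proposition \ref{Prop:HermiteKernelThm}) by writing $\maclH_s$, $\maclH_{0,s}$, $\mascS$ and their duals as inductive or projective limits of the weighted spaces $\maclH^p_{[\vartheta_r]}$. The paper's own proof is extremely terse and silently relies on the tensor-factorization of the two-variable weight $\vartheta_{r,s}(\alpha,\beta)$ into $\vartheta_{r',s}(\alpha)\vartheta_{r',s}(\beta)$ up to rescaling of $r$ — a detail you correctly identify as the main technical point and supply explicitly.
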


\par

\begin{proof}[Proof of Theorems \ref{Thm:KernelsDifficultDirection}
and \ref{Thm:KernelsEasyDirection}]
Let $p\in [1,\infty ]$,
$$
\vartheta _r (\alpha )=
\begin{cases}
e^{r|\alpha |^{\frac 1{2s}}},\quad &s\in \mathbf R_+{\textstyle{\bigcup}} \{ 0\} , 
\\[1ex]
r^{|\alpha |}(\alpha !)^{\frac 1{2\sigma}},\quad &s=\flat _\sigma ,
\end{cases}
$$
and $\sigma _r(\alpha )= \eabs \alpha ^r$. The results follow
from Proposition \ref{Prop:HermiteKernelThm}, and the facts that
\begin{alignat*}{3}
\maclH _s  &= \underset{r>0}{\textstyle{\bigcup}} \maclH ^p_{[\vartheta _r]},
& \quad
\maclH _{0,s}' &= \underset{r>0}{\textstyle{\bigcup}} \maclH ^p_{[1/\vartheta _r]},
& \quad
\mascS ' &= \underset{r>0}{\textstyle{\bigcup}} \maclH ^p_{[1/\sigma _r]}
\intertext{with suitable inductive limit topologies, and}
\maclH _s'  &= \underset{r>0}{\textstyle{\bigcap}} \maclH ^p_{[1/\vartheta _r]},
& \quad
\maclH _{0,s} &= \underset{r>0}{\textstyle{\bigcap}} \maclH ^p_{[\vartheta _r]},
& \quad
\mascS  &= \underset{r>0}{\textstyle{\bigcap}} \maclH ^p_{[\sigma _r]},
\end{alignat*}
with suitable inductive and projective limit topologies.
\end{proof}

\par

Evidently, the assertions on $\mascS$ and $\mascS '$ in
Theorems \ref{Thm:KernelsDifficultDirection} and
\ref{Thm:KernelsEasyDirection} are well-known. For the other cases,
the results are straight-forward consequences of the nuclearity of
$\maclH ^1_{[\vartheta ]}(\rr {d_2}\times \rr {d_1})$ (cf. e.{\,}g. \cite{GS} or
\cite{Tre}).

\par

For completeness we also write down some of the corresponding results in the
matrix case. The proofs follow by similar arguments
as for the proofs of Theorems \ref{Thm:KernelsEasyDirection}
and \ref{Thm:KernelsDifficultDirection}, and are left for the reader.
Here
we recall that $\ell
_{\mascS}(\nn {d_2}\times \nn {d_1})$) is the set of all matrices $A=(a_{\alpha ,\beta })
_{(\alpha ,\beta )\in \nn {d_2+d_1}}$ such that
\begin{align*}
|a_{\alpha ,\beta}| &\lesssim \eabs {(\alpha ,\beta )}^{-N}
\quad \text{for every}\quad
N\ge 0
\intertext{and $\ell _{\mascS}'(\nn {d_2}\times \nn {d_1})$) is
the set of all such matrices such that}
|a_{\alpha ,\beta}| &\lesssim \eabs {(\alpha ,\beta )}^{N}
\quad \text{for some}\quad
N\ge 0.
\end{align*}

\par

\begin{thm}\label{Thm:KernelsEasyDirection2}
Let $s\ge 0$ be real and let $T$ be the linear and continuous
map from $\ell _0(\nn {d_1})$ to $\ell _0'(\nn {d_2})$ with matrix
$A\in \ell _0'(\nn {d_2}\times \nn {d_1})$. Then the following
is true:
\begin{enumerate}
\item if $A\in \ell _s(\nn {d_2}\times \nn {d_1})$, then $T$
extends uniquely to linear and continuous mappings from
$\ell _s'(\nn {d_1})$ to $\ell _s(\nn {d_2})$;

\vrum

\item if $A\in \ell _s'(\nn {d_2}\times \nn {d_1})$, then $T$
extends uniquely to linear and continuous mappings from
$\ell _s(\nn {d_1})$ to $\ell _s'(\nn {d_2})$.
\end{enumerate}

\par

The same holds true if $\ell _s$, $\ell _s$ and their duals are replaced
by $\ell _{0,s}$, $\ell _{0,s}$ and their duals, respectively, or by
$\ell _{\mascS}$ and $\ell _{\mascS}$ and their duals, respectively.
\end{thm}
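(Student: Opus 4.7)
The plan is to imitate the proof of Theorems \ref{Thm:KernelsDifficultDirection} and \ref{Thm:KernelsEasyDirection}, but at the level of sequences rather than Hermite expansions. Since by Remark \ref{Rem:LinkEllHs} the map $T$ in \eqref{T1Map} is a topological isomorphism between the corresponding $\ell$-spaces and $\maclH$-spaces, one could also transport the $\maclH$-result directly; however, a self-contained argument based on Proposition \ref{Prop:DiscreteKernelThm} is more transparent, and that is the route I would take.

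First I would record the weight-factorization estimate. For real $s>0$ and $\vartheta_r(\alpha)=e^{r|\alpha|^{1/(2s)}}$, the elementary inequalities
$$
c_1\big (|\alpha|^{1/(2s)}+|\beta|^{1/(2s)}\big )\le |(\alpha,\beta)|^{1/(2s)}\le c_2\big (|\alpha|^{1/(2s)}+|\beta|^{1/(2s)}\big ),
$$
with constants $c_1,c_2>0$ depending only on $s$, yield
$$
\vartheta_{c_1 r}(\alpha)\vartheta_{c_1 r}(\beta)\;\le\;\vartheta_r(\alpha,\beta)\;\le\;\vartheta_{c_2 r}(\alpha)\vartheta_{c_2 r}(\beta).
$$
For the Schwartz case the polynomial weight $\sigma_r(\alpha)=\eabs{\alpha}^r$ satisfies $\sigma_r(\alpha,\beta)\lesssim\sigma_r(\alpha)\sigma_r(\beta)$ directly. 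The regime $0<s<1/2$ (where the exponent $1/(2s)$ exceeds $1$ and the map $t\mapsto t^{1/(2s)}$ is superadditive) is the only one requiring a little care, but the displayed two-sided inequality remains valid there after rescaling the constants.

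With this in hand, for part (1) I would fix $A\in\ell_s(\nn{d_2+d_1})$, so that $A\in\ell^\infty_{[\vartheta_{r_0}]}$ for some $r_0>0$. Setting $r=c_1 r_0$, $\vartheta_1(\beta)=1/\vartheta_r(\beta)$ and $\vartheta_2(\alpha)=\vartheta_r(\alpha)$, the factorization gives $A\in\ell^\infty_{[\vartheta]}$ with $\vartheta(\alpha,\beta)=\vartheta_1(\beta)^{-1}\vartheta_2(\alpha)$, and Proposition \ref{Prop:DiscreteKernelThm} produces a continuous extension $T:\ell^1_{[1/\vartheta_r]}(\nn{d_1})\to\ell^\infty_{[\vartheta_r]}(\nn{d_2})$. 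The continuous embeddings $\ell_s'(\nn{d_1})\hookrightarrow\ell^1_{[1/\vartheta_r]}(\nn{d_1})$ and $\ell^\infty_{[\vartheta_r]}(\nn{d_2})\hookrightarrow\ell_s(\nn{d_2})$ then give the desired map. For part (2) the roles are dualized: for each $r>0$, choose $\vartheta_1=\vartheta_r$, $\vartheta_2=1/\vartheta_r$, and exploit $A\in\ell^\infty_{[1/\vartheta_R]}$ with $R=r/c_2$ to meet the hypothesis of Proposition \ref{Prop:DiscreteKernelThm}. This yields $T:\ell^1_{[\vartheta_r]}\to\ell^\infty_{[1/\vartheta_r]}$ continuously for every $r>0$, and the inductive/projective-limit structure of $\ell_s=\bigcup_r\ell^1_{[\vartheta_r]}$ and $\ell_s'=\bigcap_r\ell^\infty_{[1/\vartheta_r]}$ delivers continuity of $T:\ell_s\to\ell_s'$. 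Uniqueness of the extensions is automatic, since $\ell_0$ is dense in all the domains at hand.

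The $\ell_{0,s}$-case is handled identically after swapping unions and intersections (so one picks up ``$A\in\ell^\infty_{[\vartheta_{r}]}$ for every $r>0$'' in part (1) and ``for some $r>0$'' in part (2)), and the $\ell_{\mascS}$-case follows with the polynomial weights $\sigma_r$ in place of $\vartheta_r$. The case $s=0$ is trivial: matrices in $\ell_0$ have finite support, so both mapping properties are immediate from the definitions. The main technical delicacy is thus confined to the weight-factorization step for $s<1/2$; once those two-sided bounds are in place, the remainder is bookkeeping of limit structures combined with a single invocation of Proposition \ref{Prop:DiscreteKernelThm}.
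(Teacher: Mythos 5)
Your proof is correct and matches the route the paper takes: the paper gives no explicit proof of Theorem \ref{Thm:KernelsEasyDirection2} but states that it follows by the same arguments as Theorems \ref{Thm:KernelsDifficultDirection} and \ref{Thm:KernelsEasyDirection}, namely Proposition \ref{Prop:DiscreteKernelThm} together with the characterization of $\ell _s$, $\ell _{0,s}$, $\ell _{\mascS}$ and their duals as unions/intersections of weighted $\ell ^p$-spaces. Your two-sided weight-factorization inequality only makes explicit a step the paper leaves implicit in passing from the weight $e^{r|(\alpha ,\beta )|^{1/(2s)}}$ on $\nn {d_2+d_1}$ to the tensor-product weight $\vartheta _1(\beta )^{-1}\vartheta _2(\alpha )$ required by Proposition \ref{Prop:DiscreteKernelThm}, and the separate treatment of $s=0$ is appropriate since the exponential weight degenerates there.
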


\par

\begin{thm}\label{Thm:KernelsDifficultDirection2}
Let $s\ge 0$ be real and let $T$ be the linear and continuous
map from $\ell _0(\nn {d_1})$ to $\ell _0'(\nn {d_2})$ with matrix
$A\in \ell _0'(\nn {d_2}\times \nn {d_1})$. Then the following
is true:
\begin{enumerate}
\item if $T$ is a linear and continuous map from $\ell _s'(\nn {d_1})$ to
$\ell _s(\nn {d_2})$, then $A\in \ell _s(\nn {d_2}\times \nn {d_1})$;

\vrum

\item if $T$ is a linear and continuous map from $\ell _s(\nn {d_1})$ to
$\ell _s'(\nn {d_2})$, then $A\in \ell _s'(\nn {d_2}\times \nn {d_1})$.
\end{enumerate}

\par

The same holds true if $\ell _s$, $\ell _s$ and their duals are replaced
by $\ell _{0,s}$, $\ell _{0,s}$ and their duals, respectively, or by
$\ell _{\mascS}$ and $\ell _{\mascS}$ and their duals, respectively.
\end{thm}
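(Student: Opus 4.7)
The plan is to imitate the proofs of Theorems \ref{Thm:KernelsDifficultDirection}--\ref{Thm:KernelsEasyDirection}, but in the discrete (matrix) setting, with Proposition \ref{Prop:DiscreteKernelThm} replacing Proposition \ref{Prop:HermiteKernelThm}. Using the weights $\vartheta _{r,s}$ of Definition \ref{DefSeqSpaces} (and $\sigma _r(\alpha )=\eabs \alpha ^r$ for the Schwartz case), I record first that $\ell _s=\underset{r>0}{\textstyle{\bigcup}}\ell ^p_{[\vartheta _{r,s}]}$ and $\ell _s'=\underset{r>0}{\textstyle{\bigcap}}\ell ^p_{[1/\vartheta _{r,s}]}$ with inductive respective projective limit topologies (with $\bigcup $ and $\bigcap $ interchanged in the corresponding formulas for $\ell _{0,s},\ell _{0,s}'$, and the Schwartz variants obtained by replacing $\vartheta _{r,s}$ by $\sigma _r$), the topology being independent of $p\in [1,\infty ]$.

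A second, elementary ingredient is the comparison between the tensor and the diagonal weights: for each $r>0$ there is $r'>0$ with $\vartheta _{r,s}(\alpha )\vartheta _{r,s}(\beta )\le \vartheta _{r',s}(\alpha ,\beta )$ and conversely. This rests on the quasi-sub-additive estimate $|\alpha |^{\frac 1{2s}}+|\beta |^{\frac 1{2s}}\asymp |(\alpha ,\beta )|^{\frac 1{2s}}$ (and, when $s=\flat _\sigma $, on the analogous estimate for $(\alpha !\beta !)^{\frac 1{2\sigma }}$). This comparison is what turns ``separate'' exponential decay/growth in $\alpha $ and in $\beta $ into decay/growth in $(\alpha ,\beta )$.

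For the cases in which the source of $T$ is an inductive limit and the target a projective limit (part (2) of the $\ell _s$ statement, part (1) of the $\ell _{0,s}$ and $\ell _{\mascS }$ statements), I invoke the universal properties directly: for every admissible pair of parameters $\vartheta _1,\vartheta _2$, the composition
$$
\ell ^1_{[\vartheta _1]}(\nn {d_1})\hookrightarrow \ell ^\infty _{[\vartheta _1]}(\nn {d_1})\hookrightarrow (\text{source})\xrightarrow{\ T\ }(\text{target})\to \ell ^\infty _{[\vartheta _2]}(\nn {d_2})
$$
is continuous. Proposition \ref{Prop:DiscreteKernelThm} then yields $|a_{\alpha ,\beta }|\lesssim \vartheta _1(\beta )\vartheta _2(\alpha )^{-1}$, which the weight comparison from the previous paragraph converts into membership of $A$ in the asserted target space.

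The remaining cases --- part (1) for $\ell _s$ and part (2) for $\ell _{0,s}$ and $\ell _{\mascS }$ --- have source projective and target inductive, so the universal-property trick is unavailable. I instead form the bilinear pairing $b(f,g)=\scal {Tf}g$ via the $\ell ^2$-duality; by the continuity of $T$ together with the natural duality, $b$ is separately continuous on a product of two Fr{\'e}chet spaces (one of $\ell _s'\times \ell _s'$, $\ell _{0,s}\times \ell _{0,s}$, or $\ell _{\mascS }\times \ell _{\mascS }$), which by a standard Banach--Steinhaus argument upgrades to joint continuity. Evaluating the resulting bound on $f=e_\beta $, $g=e_\alpha $ gives $|a_{\alpha ,\beta }|\lesssim \vartheta _1(\beta )^{-1}\vartheta _2(\alpha )^{-1}$; the weight comparison then finishes the argument. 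The main technical point is the separate-to-joint continuity step for these three cases; everything else is a direct translation of the proofs of Theorems \ref{Thm:KernelsDifficultDirection}--\ref{Thm:KernelsEasyDirection} to the discrete setting.
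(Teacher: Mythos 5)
Your argument is correct and, for the parts where the source space is an inductive limit (Roumieu part (2), Beurling and Schwartz part (1)), it is the same strategy the paper uses: push down through the inductive/projective limit topologies, apply Proposition~\ref{Prop:DiscreteKernelThm} with the weights $\vartheta _{r_1,s}$, $\vartheta _{r_2,s}$, and convert the resulting separate $\alpha$- and $\beta$-estimate into an estimate in $(\alpha ,\beta )$ via $|\alpha |^{\frac 1{2s}}+|\beta |^{\frac 1{2s}}\asymp (|\alpha |+|\beta |)^{\frac 1{2s}}$ (with constants depending only on $s$).

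Where you genuinely diverge from what the paper gestures at is the opposite parity (Roumieu part (1), Beurling and Schwartz part (2)), where the source is a Fr\'echet (projective) space and the target a countable inductive limit of Banach spaces. Here Proposition~\ref{Prop:DiscreteKernelThm} requires one fixed pair of parameters, so a factorization step is unavoidable; the paper's published proof of Theorems~\ref{Thm:KernelsDifficultDirection}--\ref{Thm:KernelsEasyDirection} just cites the union/intersection representations and does not spell this out. The natural ``paper'' device would be Grothendieck's factorization theorem, applicable since $\ell _s(\nn d)=\bigcup _{r>0}\ell ^p_{[\vartheta _{r,s}]}(\nn d)$ (and likewise $\ell _{0,s}'$, $\ell _{\mascS}'$) is a regular (LB)-space --- indeed a Silva space, the step inclusions being compact because $\vartheta _{r_2,s}/\vartheta _{r_1,s}\to 0$ for $r_2<r_1$ --- so a continuous map from the Fr\'echet source into it lands and is continuous into a single step. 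Your route via the bilinear form $b(f,g)=\scal {Tf}g$ on a product of Fr\'echet spaces and the joint-continuity theorem for separately continuous bilinear maps is an equally valid, closely related Baire-category argument, and it delivers the identical bound $|a_{\alpha ,\beta }|\lesssim \vartheta _{r_1,s}(\beta )^{-1}\vartheta _{r_2,s}(\alpha )^{-1}$ upon testing against $e_\beta$, $e_\alpha$. It has the minor advantage of not requiring you to verify regularity of the (LB)-space; Grothendieck factorization, on the other hand, gives the slightly stronger structural fact that $T$ maps boundedly into one step $\ell ^p_{[\vartheta _{r_2,s}]}$. Two small remarks: since the theorem is stated only for real $s\ge 0$, the $\flat _\sigma$ variant of the weight comparison you mention is not actually needed here; and for $s=0$ the weight $\vartheta _{r,s}$ degenerates, so that endpoint should be handled directly from the definitions of $\ell _0$ and $\ell _0'$ rather than through the weighted-$\ell^p$ formalism.
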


\par

\section{Factorizations of Pilipovi{\'c} and Gelfand-Shilov kernels,
and pseudo-differential operators}\label{sec3}

\par

In this section we deduce convenient factorization properties
for operators with kernels in Pilipovi{\'c} spaces. 

\par

In what follows we use the convention that if $T_0$ is a linear and continuous
operator from $\maclH _0(\rr {d_1})$ to $\maclH _0'(\rr {d_2})$,
and $g \in \maclH _0'(\rr {d_0})$, then $T_0\otimes g$ is the linear
and continuous operator from $\maclH _0(\rr {d_1})$ to
$\maclH _0'(\rr {d_2+d_0})$, given by
$$
(T_0\otimes g)\, :\, f \mapsto (T_0f)\otimes g .
$$

\par

In the following definition we recall that an operator $T$ from $\maclH _0(\rr d)$ to
$\maclH _0'(\rr d)$ is called \emph{positive semi-definite}, if
$(Tf,f)_{L^2}\ge 0$, for every $f\in \maclH _0(\rr d)$. Then we write $T\ge 0$.

\par

\begin{defn}\label{defHermdiagform}
Let $d_2\ge d_1$ and let $T$ be a linear operator from $\maclH _0(\rr {d_1})$ to
$\maclH _0'(\rr {d_2})$. Then $T$ is said to be a \emph{Hermite diagonal operator}
if $T=T_0\otimes g$, where the Hermite functions are eigenfunctions to $T_0$,
and either $d_2=d_1$ and $g=1$, or $d_2>d_1$ and $g$ is a Hermite function.

\par

Moreover, if $T=T_0\otimes g$ is on Hermite diagonal form and $T_0$ is
positive semi-definite, then
$T$ is said to be a \emph{positive semi-definite Hermite diagonal operator}.
\end{defn}

\par

The first part of the following result can be found in \cite{Beals, Vo}
(see also \cite{Kl, Sad} and the references therein for an elementary proof).

\par

\begin{thm}\label{GSkernels}
Let $s\in \mathbf R$, $T$ be a linear and continuous operator from $\maclH _0(\rr {d_1})$
to $\maclH _0'(\rr {d_2})$ with the kernel $K$, and let $d_0\ge \min (d_1,d_2)$.
Then the following is true:
\begin{enumerate}
\item If $s\ge 0$ and $K\in \maclH _{s}(\rr {d_2+d_1})$, then there
are operators $T_1$ and $T_2$ with
kernels $K_1\in \maclH _{s}(\rr {d_0+d_1})$ and $K_2\in \maclH _{s}(\rr {d_2+d_0})$
respectively such that $T=T_2\circ T_1$. 
Furthermore, if $j \in \{ 1, 2\}$ is fixed and $d _0 \geq d _j$,
then $T _j$ can be chosen as a positive semi-definte Hermite diagonal operator. 

\vrum

\item If $s>0$ and $K\in \maclH _{0,s}(\rr {d_2+d_1})$, then there
are operators $T_1$ and $T_2$ with
kernels $K_1\in \maclH _{0,s}(\rr {d_0+d_1})$ and $K_2\in \maclH
_{0,s}(\rr {d_2+d_0})$ respectively such that $T=T_2\circ T_1$.  Furthermore,
if $j \in \{ 1, 2\}$ is fixed and $d _0 \geq d _j$,
then $T _j$ can be chosen as a positive semi-definte Hermite diagonal operator. 
\end{enumerate}
\end{thm}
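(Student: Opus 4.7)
My strategy is to translate the factorisation into a matrix factorisation via the Hermite expansion. Writing
$$
K(x_2,x_1)=\sum_{\alpha\in\nn{d_2},\,\beta\in\nn{d_1}} c_{\alpha,\beta}\,h_\alpha(x_2)h_\beta(x_1),
$$
Remark~\ref{Rem:LinkEllHs} turns $K\in\maclH_s$ (resp.\ $\maclH_{0,s}$) into $|c_{\alpha,\beta}|\lesssim e^{-r(|\alpha|+|\beta|)^{1/(2s)}}$ for some (resp.\ every) $r>0$. The task becomes to factor the matrix $A=(c_{\alpha,\beta})$ as $A=B_2B_1$, with each $B_j$ corresponding to a kernel in the same Pilipović space and one factor coming from a positive semi-definite Hermite diagonal operator.

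Since the adjoint kernel $K^*(x_1,x_2)=\overline{K(x_2,x_1)}$ is in the same space (Hermite functions being real), the case $d_0\ge d_2$ reduces to the case $d_0\ge d_1$ by applying the latter to $T^*$ and then taking adjoints. Concentrating on $d_0\ge d_1$, fix $\gamma_0\in\nn{d_0-d_1}$ (empty when $d_0=d_1$) and a positive sequence $\{\lambda_\beta\}_{\beta\in\nn{d_1}}$, and set
$$
T_1:=T_0\otimes h_{\gamma_0},\qquad T_0h_\beta=\lambda_\beta h_\beta,
$$
so that $T_1$ is a positive semi-definite Hermite diagonal operator in the sense of Definition~\ref{defHermdiagform}. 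The Hermite matrix of $K_1$ equals $\lambda_\beta$ on the sparse slice $\{\gamma=(\beta,\gamma_0)\}$ and zero elsewhere. Define $T_2$ by $b^{(2)}_{\alpha,(\beta,\gamma_0)}:=c_{\alpha,\beta}/\lambda_\beta$ and $b^{(2)}_{\alpha,\gamma}:=0$ whenever the last $d_0-d_1$ components of $\gamma$ differ from $\gamma_0$. A direct matrix computation gives $A=B_2B_1$, hence $T=T_2\circ T_1$.

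For case~(1), take $\lambda_\beta:=e^{-r'|\beta|^{1/(2s)}}$ with $r'\in(0,r)$. The elementary bound $|\beta|^{1/(2s)}\le(|\alpha|+|\beta|)^{1/(2s)}$ yields
$$
|b^{(2)}_{\alpha,(\beta,\gamma_0)}|\lesssim e^{-(r-r')(|\alpha|+|\beta|)^{1/(2s)}},
$$
and the equivalence $|\beta|^{1/(2s)}\asymp(2|\beta|+|\gamma_0|)^{1/(2s)}$ up to an $s,\gamma_0$-dependent constant delivers both $K_1\in\maclH_s(\rr{d_0+d_1})$ and $K_2\in\maclH_s(\rr{d_2+d_0})$.

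The main obstacle is case~(2): no fixed exponential profile $\lambda_\beta$ can place $K_1$ in $\maclH_{0,s}$---which forces $\lambda_\beta$ to decay faster than any exponential of $|\beta|^{1/(2s)}$---while keeping $c_{\alpha,\beta}/\lambda_\beta$ decaying at every rate. The remedy I would pursue is to let $\lambda_\beta$ depend on $A$ itself, for instance through a weighted $\ell^2$ column norm $\lambda_\beta:=\bigl(\sum_\alpha|c_{\alpha,\beta}|^2\,w_\beta(|\alpha|)\bigr)^{1/2}$ whose weight $w_\beta$ is calibrated so that the ``for every $r$'' decay of $A$ makes both $\lambda_\beta$ and $c_{\alpha,\beta}/\lambda_\beta$ inherit Beurling-type decay. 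The balancing step uses the two-sided equivalence $(|\alpha|+|\beta|)^{1/(2s)}\asymp|\alpha|^{1/(2s)}+|\beta|^{1/(2s)}$ (up to $s$-dependent constants) to separate and reassemble the joint decay, and tuning $w_\beta$ against the Beurling weight of $A$ is where the bulk of the technical work should lie.
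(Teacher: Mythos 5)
Your reduction to a matrix factorisation via Hermite coefficients, the adjoint trick for $d_0\ge d_2$, the tensoring with a fixed Hermite function to go from $d_0=d_1$ to $d_0>d_1$, and the explicit choice $\lambda_\beta=e^{-r'|\beta|^{1/(2s)}}$ for case~(1) all match the paper's proof in substance. Case~(1) is therefore essentially correct and complete.

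Case~(2) is where the proposal falls short. You correctly diagnose the obstruction (no fixed exponential profile for $\lambda_\beta$ can make the diagonal factor Beurling while keeping $c_{\alpha,\beta}/\lambda_\beta$ Beurling), but the suggested remedy --- letting $\lambda_\beta$ be a weighted $\ell^2$ column norm of $A$ with a weight $w_\beta$ to be ``calibrated'' --- is left entirely unverified, and it is not clear it would work: the column norm mixes decay rates across $\alpha$, and the required two-sided estimate (both $\lambda_\beta$ and $c_{\alpha,\beta}/\lambda_\beta$ decaying at every rate) is precisely the hard point that you defer. The paper's proof instead uses a concrete staircase construction: it introduces the finite quantities
$$
\Theta_n \equiv \sup\Sets{|\beta|}{|a_{\alpha,\beta}|\ge e^{-2(n+1)(|\alpha|^{1/(2s)}+|\beta|^{1/(2s)})}\ \text{for some}\ \alpha},
$$
partitions $\nn{d_1}$ into finite shells $I_j$ via the $\Theta_j$'s, and sets the diagonal rate to $e^{-j|\beta|^{1/(2s)}}$ for $\beta\in I_j$. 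Because $j\to\infty$ as $|\beta|\to\infty$, the diagonal factor gains every exponential rate; because $\Theta_j$ bounds how slowly $a_{\alpha,\beta}$ can decay on each shell, the quotient factor also retains every rate, with the finitely many shells $j\le r+1$ handled trivially. This shell-wise choice of $\lambda_\beta$ is the key idea your proposal is missing; without it, or some equally explicit substitute, case~(2) is not proved.
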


\par

The corresponding result for $s=\flat _\sigma$ reads:

\par

\begin{thm}\label{FLkernels}
Let $\sigma >0$, $T$ be a linear and continuous 
operator from $\maclH _0(\rr {d _1})$
to $\maclH '_0(\rr {d _2})$ with the 
kernel $K$. 
Then the following is true.
\begin{enumerate}
\item If $K \in \maclH _{\flat _\sigma}(\rr {d _2 + d _1})$,
then there are operators $T _0$, $T _1$ and $T _2$ with kernels
$K _0 \in \maclH _{1/2} (\rr {d _2 + d _1})$,
$K_1 \in \maclH _{\flat _{2\sigma }}(\rr {d _1 + d _1})$
and $K_2 \in \maclH _{\flat _{2\sigma }}(\rr {d _2 + d _2})$, respectively,
and \mbox{$T = T _2 \circ T _0 \circ T _1$}. Furthermore, 
$T _1$ and $T _2$ can be chosen 
as positive semi-definite Hermite diagonal operators;

\vrum

\item If $K \in \maclH _{0, \flat _\sigma}(\rr {d _2 + d _1})$,
then there are operators $T _0$, $T _1$ and $T _2$ with kernels
$K _0 \in \maclH _{0, 1/2} (\rr {d _2 + d _1})$,
$K_1 \in \maclH _{0, \flat _{2\sigma }}(\rr {d _1 + d _1})$
and $K_2 \in \maclH _{0, \flat _{2\sigma }}(\rr {d _2 + d _2})$, respectively,
and \mbox{$T = T _2 \circ T _0 \circ T _1$}. Furthermore,
$T _1$ and $T _2$ can be chosen as positive semi-definite
Hermite diagonal operators.
\end{enumerate}
\end{thm}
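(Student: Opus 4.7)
The plan is to reduce the problem to the Hermite coefficient level, via Proposition \ref{Prop:HermiteKernelThm} and Remark \ref{Rem:GelfandTripp}. Write
\[
K(x,y) = \sum_{\alpha,\beta} c_{\alpha,\beta}\, h_\alpha(x)\, h_\beta(y).
\]
The hypothesis $K \in \maclH_{\flat_\sigma}(\rr{d_2+d_1})$ translates to
$|c_{\alpha,\beta}| \le C\, h^{|\alpha|+|\beta|}(\alpha!\beta!)^{-1/(2\sigma)}$
for some $h>0$ (part (1)), and for every $h>0$ (part (2)). I would look for the factorization in the form $T=T_2\circ T_0\circ T_1$ with $T_1 h_\beta = \mu_\beta h_\beta$ on $\rr{d_1}$ and $T_2 h_\alpha = \lambda_\alpha h_\alpha$ on $\rr{d_2}$ (both positive, so $T_1,T_2$ are automatically positive semi-definite Hermite diagonal operators) and $T_0$ defined by its Hermite matrix $a_{\alpha,\beta} := c_{\alpha,\beta}/(\mu_\beta\lambda_\alpha)$. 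With this ansatz, $T=T_2\circ T_0\circ T_1$ holds term-by-term on the Hermite basis, so the whole problem collapses to selecting $\mu_\beta,\lambda_\alpha$ so that the three kernels land in the prescribed spaces.

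For part (1), I would pick
\[
\mu_\beta = (2h)^{|\beta|}(\beta!)^{-1/(2\sigma)},\qquad \lambda_\alpha = (2h)^{|\alpha|}(\alpha!)^{-1/(2\sigma)}.
\]
A direct exponent check on the diagonal $\alpha=\beta$ shows that the Hermite coefficients of $K_1,K_2$ fit in $\maclH_{\flat_{2\sigma}}$ with weight parameter $r\asymp\sqrt{h}$ (the exponent $1/(2\sigma)$ on a single factorial is twice the target exponent $1/(4\sigma)$), while the quotient satisfies $|a_{\alpha,\beta}| \le C\, 2^{-(|\alpha|+|\beta|)}$, precisely the exponential decay characterizing $\maclH_{1/2}$.

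Part (2) is the technical heart, since no fixed $h$ is available. Introduce the envelope
\[
A_n := \sup_{|\alpha|+|\beta|=n} |c_{\alpha,\beta}|(\alpha!\beta!)^{1/(2\sigma)};
\]
the Beurling hypothesis forces $A_n^{1/n}\to 0$. The key step is to produce a strictly positive \emph{submultiplicative} sequence $\xi_n$ (meaning $\xi_{n+m}\le \xi_n\xi_m$) with $\xi_n^2\ge A_n$ and $\xi_n^{1/n}\to 0$. A workable choice is $\xi_n := \exp(-n f(n))$ with
\[
f(n) := \inf_{m\ge n}\Bigl(-\tfrac{1}{m}\log\sqrt{A_m\vee e^{-m^2}}\,\Bigr),
\]
where the $\vee e^{-m^2}$ ensures positivity; monotonicity of $f$ forces $nf(n)$ to be superadditive, hence $\xi$ to be submultiplicative. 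Setting $\mu_\beta=\xi_{|\beta|}(\beta!)^{-1/(2\sigma)}$ and $\lambda_\alpha=\xi_{|\alpha|}(\alpha!)^{-1/(2\sigma)}$ yields $K_1,K_2\in\maclH_{0,\flat_{2\sigma}}$ (via $\xi_n^{1/n}\to 0$), while submultiplicativity applied twice gives
\[
|a_{\alpha,\beta}| \le \frac{A_{|\alpha|+|\beta|}}{\xi_{|\alpha|}\xi_{|\beta|}} \le \frac{\xi_{|\alpha|+|\beta|}^2}{\xi_{|\alpha|}\xi_{|\beta|}} \le \xi_{|\alpha|}\xi_{|\beta|} \le C_c\, e^{-c(|\alpha|+|\beta|)}
\]
for every $c>0$, placing $K_0$ in $\maclH_{0,1/2}$. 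The principal obstacle is precisely this Fekete-type envelope construction, where one must balance submultiplicativity, dominance over $\sqrt{A_n}$, strict positivity (to make the division by $\mu_\beta\lambda_\alpha$ legal) and super-exponential decay simultaneously; the remaining bookkeeping, namely the exponent verification and the check that the $T_j$ are Hermite diagonal and positive semi-definite, is routine.
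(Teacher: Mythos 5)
Your part (1) is essentially the paper's argument with a cosmetic change of constants: the paper sets $\mu_\alpha = R^{2|\alpha|}(\alpha!)^{-1/(2\sigma)}$ where $R$ is the Roumieu constant, you set $\mu_\alpha=(2h)^{|\alpha|}(\alpha!)^{-1/(2\sigma)}$. Either way the cross-ratio $|a_{\alpha,\beta}|$ decays geometrically and the diagonal pieces sit in $\maclH_{\flat_{2\sigma}}$ on the nose, so there is nothing to add there.

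Part (2) is where you genuinely diverge, and your route is sound. The paper handles the Beurling case by partitioning $\nn{d_j}$ into a nested family of \emph{finite} bins $I_{j,m}$ defined through threshold quantities $\Theta_{j,m}$ (which are finite precisely because of the Beurling decay), and then assigns a bin-dependent geometric weight $m^{-2|\alpha|}$ to each diagonal coefficient; the needed estimates are checked separately on the ``small-$m$'', ``mixed'', and ``large-$m$'' index ranges. You instead compress all of this into the single envelope $A_n=\sup_{|\alpha|+|\beta|=n}|c_{\alpha,\beta}|(\alpha!\beta!)^{1/(2\sigma)}$, observe $A_n^{1/n}\to 0$, and manufacture a positive submultiplicative majorant $\xi_n=e^{-nf(n)}$ with $\xi_n^2\ge A_n$ and $\xi_n^{1/n}\to0$ via the Fekete-type infimum $f(n)=\inf_{m\ge n}\bigl(-\tfrac1{2m}\log(A_m\vee e^{-m^2})\bigr)$. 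The monotonicity of $f$ does give superadditivity of $n\mapsto nf(n)$ (since $(n+m)f(n+m)\ge nf(n)+mf(m)$ for $f$ nondecreasing and $n,m\ge0$), hence submultiplicativity of $\xi$; the chain $|a_{\alpha,\beta}|\le A_{|\alpha|+|\beta|}/(\xi_{|\alpha|}\xi_{|\beta|})\le\xi_{|\alpha|+|\beta|}^2/(\xi_{|\alpha|}\xi_{|\beta|})\le\xi_{|\alpha|}\xi_{|\beta|}$ is correct; and $\xi_n^{1/n}\to0$ places $K_0$ in $\maclH_{0,1/2}$ and $K_1,K_2$ in $\maclH_{0,\flat_{2\sigma}}$. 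What your approach buys is a cleaner conceptual statement (a regularized decay envelope rather than bin bookkeeping with three separate index regimes $M_1,M_2,M_3$), at the small cost of the Fekete-lemma-style lemma on the envelope. Two micro-gaps you should patch when writing this up: define $\xi_0$ separately, e.g.\ $\xi_0=\max(1,\sqrt{A_0})$, since $f(0)$ is formally $0/0$ and one needs $\xi_0\ge1$ for submultiplicativity with $m=0$; and note explicitly that $b_m:=-\tfrac1{2m}\log(A_m\vee e^{-m^2})\to\infty$, so $f(n)$ is a finite real number for each $n\ge1$ and $f(n)\to\infty$, which is what powers the $\xi_n^{1/n}\to0$ conclusion.
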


\par

\begin{rem}
An operator with kernel in $\maclH _{s}(\rr {2d})$ is sometimes called a regularizing
operator with respect to $\maclH _{s}(\rr d)$, because it extends uniquely to a continuous
map from (the large space) $\maclH _{s}'(\rr d)$ into (the small space) $\maclH _{s}
(\rr d)$. Analogously, an operator with kernel in $\maclH _{0,s}(\rr {2d})$
($\mathscr S (\rr {2d})$) is sometimes called a regularizing operator with
respect to $\maclH _{0,s}(\rr d)$ ($\mathscr S(\rr d)$).
\end{rem}

\par

\begin{proof}[Proof of Theorem \ref{GSkernels}]
%
%
First we assume that $d_0= d_1$, and start to prove (1). Let $h_{d,\alpha}(x)$
be the Hermite function on $\rr d$ of order
$\alpha \in \mathbf N^d$. Then $K$ posses the expansion
\begin{equation}\label{Kexp}
K(x,y) = \sum _{\alpha \in \mathbf N^{d_2}}\sum _{\beta \in \mathbf N^{d_1}}
a_{\alpha ,\beta}h_{d_2,\alpha}(x)h_{d_1,\beta}(y),
\end{equation}
where the coefficients $a_{\alpha ,\beta}$ satisfies
\begin{equation}\label{coeffsest}
\sup _{\alpha ,\beta} |a_{\alpha ,\beta}e^{r(|\alpha |^{\frac 1{2s}}+|\beta |^{\frac 1{2s}})}| <\infty ,
\end{equation}
for some $r>0$.

\par

Now let $z\in \rr {d_1}$, and
\begin{equation}\label{K1K2def}
\begin{aligned}
K_{0,2}(x,z) &= \sum _{\alpha \in \mathbf N^{d_2}}\sum _{\beta \in \mathbf N^{d_1}}
b_{\alpha ,\beta}h_{d_2,\alpha}(x)h_{d_1,\beta}(z),
\\[1ex]
K_{0,1}(z,y) &= \sum _{\alpha ,\beta \in \mathbf N^{d_1}}
c_{\alpha ,\beta }h_{d_1,\alpha}(z)h_{d_1,\beta}(y),
\end{aligned}
\end{equation}
where
$$
b_{\alpha ,\beta} = a_{\alpha ,\beta} e^{\frac r2|\beta|^{\frac 1{2s}}}\quad \text{and}\quad
c_{\alpha ,\beta} = \delta _{\alpha ,\beta}e^{-\frac r2|\alpha |^{\frac 1{2s}}}.
$$
Here $\delta _{\alpha ,\beta}$ is the Kronecker delta. Then it follows that
$$
\int K_{0,2}(x,z)K_{0,1}(z,y)\, dz = \sum _{\alpha \in \mathbf N^{d_2}}\sum _{\beta
\in \mathbf N^{d_1}} a_{\alpha ,\beta}h_{d_2,\alpha}(x)h_{d_1,\beta}(y)
= K(x,y).
$$
Hence, if $T_j$ is the operator with kernel $K_{0,j}$, $j=1,2$, then $T=T_2\circ T_1$.
Furthermore,
$$
\sup _{\alpha ,\beta} |b_{\alpha ,\beta}e^{\frac r2(|\alpha |^{\frac 1{2s}}+|\beta |^{\frac 1{2s}})}|
\le \sup _{\alpha ,\beta} |a_{\alpha ,\beta}e^{r(|\alpha |^{\frac 1{2s}}+|\beta |^{\frac 1{2s}})}|
<\infty 
$$
and
$$
\sup _{\alpha ,\beta} |c_{\alpha ,\beta}e^{\frac r4(|\alpha |^{\frac 1{2s}}+|\beta |^{\frac 1{2s}})}|
= \sup _{\alpha } |e^{-\frac r4 |\alpha |^{\frac 1{2s}}}e^{\frac r4 |\alpha |^{\frac 1{2s}}}|
<\infty .
$$
This implies that $K_{0,1}\in \maclH _{s}(\rr {d_1+d_1})$ and
$K_{0,2}\in \maclH _{s}(\rr {d_2+d_1})$,
and (1) follows with $K_1=K_{0,1}$ and $K_2=K_{0,2}$, in the case
$d_0=d_1$.

\par

In order to prove (2), we assume that $K\in \maclH _{0,s}(\rr {d_2+d_1})$, and we
let $a_{\alpha ,\beta}$ be the same as the above. Then
\eqref{coeffsest} holds for any $r>0$, which implies that if $n\ge 0$ is an
integer, then
\begin{equation}\label{ThetaNset}
\Theta _n \equiv \sup \sets {|\beta |}{|a_{\alpha ,\beta}|\ge
e^{-2(n+1)(|\alpha |^{\frac 1{2s}}+|\beta |^{\frac 1{2s}})}\ \text{for some}\ \alpha
\in \mathbf N^{d_2}}
\end{equation}
is finite.

\par

We let
\begin{align*}
I_1 &= \sets {\beta \in \mathbf N^{d_1}}{|\beta |\le \Theta _1+1}
\intertext{and define inductively}
I_j &= \sets {\beta \in \mathbf N^{d_1}\setminus I_{j-1}}{|\beta |\le \Theta _j+j},
\quad j\ge 2.
\end{align*}
Then
$$
I_j\cap I_k=\emptyset \quad \text{when}\quad j\neq k,\quad \text{and}\quad
\bigcup _{j\ge 0} I_j=\mathbf N^{d_1},
$$
and by the definitions it follows that $I_j$ is a finite set for every $j$.

\par

We also let $K_{0,1}$ and $K_{0,2}$ be given by \eqref{K1K2def}, where
$$
b_{\alpha _2,\beta} = a_{\alpha _2,\beta}e^{j|\beta |^{\frac 1{2s}}}\quad \text{and}\quad
c_{\alpha _1,\beta} = \delta _{\alpha _1,\beta}e^{-j|\beta |^{\frac 1{2s}}},
$$
when $\alpha _1\in \mathbf N^{d_1}$, $\alpha _2\in \mathbf N^{d_2}$
and $\beta \in I_j$. If $T_j$ is the operator with kernel $K_{0,j}$ for $j=1,2$, then it follows
that $T_2\circ T_1 =T$. Furthermore, if $r>0$, then we have
$$
\sup _{\alpha ,\beta } |b_{\alpha ,\beta} e^{r (|\alpha |^{\frac 1{2s}}+|\beta |^{\frac 1{2s}})}|
\le J_1+J_2,
$$
where
\begin{align}
J_1 &= \sup _{j\le r+1}\sup _\alpha \sup _{\beta \in I_j}|b_{\alpha ,\beta}
e^{r (|\alpha |^{\frac 1{2s}}+|\beta |^{\frac 1{2s}})}|\label{J1def}
\intertext{and}
J_2 &= \sup _{j> r+1}\sup _\alpha \sup _{\beta \in I_j}|b_{\alpha ,\beta}
e^{r (|\alpha |^{\frac 1{2s}}+|\beta |^{\frac 1{2s}})}|\label{J2def}
\end{align}

\par

Since only finite numbers of $\beta$ is involved in the suprema in \eqref{J1def},
it follows from \eqref{coeffsest} and the definition of $b_{\alpha ,\beta}$ that $J_1$
is finite.

\par

For $J_2$ we have
\begin{multline*}
J_2 = \sup _{j> r+1}\sup _\alpha \sup _{\beta \in I_j}|a_{\alpha ,\beta}
e^{r|\alpha |^{\frac 1{2s}}+(r+j)|\beta |^{\frac 1{2s}})}|
\\[1ex]
\le \sup _{j> r+1}\sup _\alpha \sup _{\beta \in I_j}|e^{-2j(|\alpha |^{\frac 1{2s}}+|\beta|^{\frac 1{2s}})}
e^{r|\alpha |^{\frac 1{2s}}+(r+j)|\beta |^{\frac 1{2s}})}| <\infty ,
\end{multline*}
where the first inequality follows from \eqref{ThetaNset}. Hence
$$
\sup _{\alpha ,\beta } |b_{\alpha ,\beta} e^{r (|\alpha |^{\frac 1{2s}}+|\beta |^{\frac 1{2s}})}|< \infty 
$$
for every $r>0$, which implies that $K_{0,2}\in \maclH _{0,s}(\rr {d_2+d_1})$.

\par

If we now replace $b_{\alpha ,\beta}$ with $c_{\alpha ,\beta}$ in the definition
of $J_1$ and $J_2$, it follows by similar arguments that both $J_1$ and $J_2$
are finite, also in this case. This gives
$$
\sup _{\alpha ,\beta } |c_{\alpha ,\beta} e^{r (|\alpha |^{\frac 1{2s}}+|\beta |^{\frac 1{2s}})}|< \infty 
$$
for every $r>0$. Hence $K_1\in \maclH _{0,s}(\rr {d_1+d_1})$, and (2) follows in the case $d_0=d_1$.

\par

Next assume that $d_0>d_1$, and let $d=d_0-d_1\ge 1$. Then we set
$$
K_1(z_0,y) = K_{0,1}(z_1,y)h_{d,0}(z)\quad \text{and}\quad
K_2(x,z_0) = K_{0,2}(x,z_1)h_{d,0}(z),
$$
where $K_{0,j}$ are the same as in the first part of the proofs, $z_1\in \rr {d_1}$
and $z\in \rr d$, giving that $z_0=(z_1,z)\in \rr {d_0}$. We get
$$
\int _{\rr {d_0}}K_2(x,z_0)K_1(z_0,y)\, dz_0 = \int _{\rr {d_1}}K_{0,2}(x,z_1)K_{0,1}(z_1,y)\,
dz_1 = K(x,y).
$$
The assertion (1) now follows in the case $d_0>d_1$ from the equivalences
\begin{alignat*}{3}
K_1 &\in \maclH _{s}(\rr {d_0+d_1})&\quad &\Longleftrightarrow & \quad
K_{0,1} &\in \maclH _{s}(\rr {d_1+d_1})
\intertext{and}
K_2 &\in \maclH _{s}(\rr {d_2+d_0})&\quad &\Longleftrightarrow & \quad
K_{0,2} &\in \maclH _{s}(\rr {d_2+d_1}),
\end{alignat*}
Since the same equivalences hold after the $\maclH _{s}$ spaces have been replaced
by $\maclH _{0,s}$ spaces, the assertion (2) also follows in the case $d_0>d_1$, and the
theorem follows in the case $d_0\ge d_1$.

\par

It remains to prove the result in the case $d_0\ge d_2$. 
By taking the adjoint, the rules of $j=1$ and
$j=2$ are interchanged, and the result follows when $d_0 \geq d_2$ as well. 
The proof is complete.
\end{proof}

\par

\begin{proof}[Proof of Theorem \ref{FLkernels}]
(1) 
We have
\begin{equation}\label{Eq:KDefAgain}
K(x, y) = \sum _{\alpha \in \bold N ^{d_2}} \sum _{\beta \in \nn {d_1}}
a _{\alpha, \beta} h_{d _2, \alpha}(x) h _{d _1, \beta}(y),
\end{equation}
where 
$$
\sup _{\alpha, \beta}|a _{\alpha, \beta}(\alpha! \beta !) ^{\frac 1{2\sigma}} R ^{-(|\alpha | + |\beta|)}|
< \infty,
$$
for some $R > 1$.

\par

Let $z _j\in \rr {d _j}$, and
\begin{align}
K _0(z_2, z_1) &= \sum _{\alpha \in \bold N ^{d_2}} \sum _{\beta \in \nn {d_1}}
a _{0, \alpha, \beta} h_{d _2, \alpha}(z_2) h _{d _1, \beta}(z_1),\label{Eq:K0Def}
\\[1ex]
K _1(z_1, y) &= \sum _{\alpha \in \bold N ^{d_1}} \sum _{\beta \in \nn {d _1}}
a _{1, \alpha, \beta} h_{d _1, \alpha}(z_1) h _{d_1, \beta}(y)\label{Eq:K1Def}
\intertext{and}
K _2(x,z_2) &= \sum _{\alpha \in \bold N ^{d _2}} \sum _{\beta \in \nn {d _2}}
a _{2, \alpha, \beta} h_{d _2, \alpha}(x) h _{d_2, \beta}(z_2),\label{Eq:K2Def}
\end{align}
where
\begin{alignat*}{2}
a _{j, \alpha, \beta} 
&= (\alpha !) ^{-\frac 1{2\sigma}} \delta _{\alpha, \beta} R^{2|\alpha|},& 
\quad \alpha ,\beta &\in \nn {d_j},
\quad j=1, 2,
\intertext{and}
a _{0, \alpha, \beta} 
&= a _{\alpha, \beta} (\alpha ! \beta !) ^{\frac 1{2\sigma}} R^ {-2(|\alpha | + |\beta |)}, &
\quad \alpha &\in \nn {d_2},\ \beta \in \nn {d_1}.
\end{alignat*}
Then it follows that
$$
\iint _{\rr {d_2+d _1}}K _2(x, z_2) K _0(z_2, z_1) K _1(z_1, y)  \, d z_2 d z_1
= K(x, y).
$$
Hence, if $T _j$ is the operator with kernel $K _j$, $j =0, 1, 2$, 
then $T = T _2 \circ T _0 \circ T _1$.
Furthermore, 
$$
\sup _{\alpha, \beta}|a _{j, \alpha, \beta}(\alpha! \beta !) ^{\frac 1{4\sigma}} R ^{-(|\alpha | + |\beta |)}|
< \infty, \quad \alpha ,\beta \in \nn {d_j} \quad j = 1, 2,
$$
and if $0<c<\log R$, then
$$
\sup _{\alpha, \beta}|a _{0, \alpha, \beta} e^{c(|\alpha | + |\beta |)}|
\leq \sup _{\alpha, \beta}|R ^{-(|\alpha | + |\beta|)} e^{c(|\alpha | + |\beta |)}| 
< \infty ,
$$
and (1) follows, in view of ??.

\par

Next we prove (2).
Let $a_{\alpha ,\beta}$ be as in \eqref{Eq:KDefAgain}. Then
$$
\sup _{\alpha, \beta}|a _{\alpha, \beta}(\alpha! \beta !) ^{\frac 1{2\sigma}} R ^{|\alpha |+ |\beta|}|
< \infty,
$$
for every $R > 1$, which implies that
\begin{align*}
\Theta _{1,n} &\equiv \sup \sets {|\beta |}{|a_{\alpha ,\beta}|\ge
(n+1) ^{-6(|\alpha |+ |\beta |)} (\alpha ! \beta !) ^{-\frac 1{2\sigma}}\ 
\text{for some}\ \alpha \in \nn {d _2}}
\intertext{and}
\Theta _{2,n} &\equiv \sup \sets {|\alpha |}{|a_{\alpha ,\beta}|\ge
(n+1) ^{-6(|\alpha |+ |\beta |)} (\alpha ! \beta !) ^{-\frac 1{2\sigma}}\ 
\text{for some}\ \beta \in \nn {d _1}}
\end{align*}
are finite for every $n\ge 1$ and $j=1,2$.

\par

We let
\begin{align*}
I_{j,1} &= \sets {\gamma \in \nn {d _j}}
{|\gamma |\le \Theta _{j,1}+1},
\intertext{and define inductively}
I_{j,m} &= \sets {\gamma \in \nn {d _j}\setminus I_{j,m-1}}
{|\gamma |\le \Theta _{j,m}+m},\quad m\ge 2,\ j=1,2.
\end{align*}
Then
$$
I_{j,m}\cap I_{j,n}=\emptyset \quad \text{when}\quad m\neq n,
\quad \text{and}\quad \underset{m\ge 1}{\textstyle{\bigcup}}  I_{j,m}=\nn {d _j}.
$$
and by the definitions it follows that $I_{j,m}$ is a finite set for every $m$.

\par

We also let $K_j$, $j = 0,1, 2$ be given by \eqref{Eq:K0Def}--\eqref{Eq:K2Def}, where
\begin{alignat*}{2}
a _{j, \alpha, \beta} 
&= (\alpha !) ^{-\frac 1{2\sigma}} \delta _{\alpha, \beta} m ^{-|\alpha+\beta|},&
\quad \alpha &\in I_{j,m}, \quad j=1, 2,
\intertext{and}
a _{0, \alpha, \beta} 
&= a _{\alpha, \beta} (\alpha ! \beta !) ^{\frac 1{2\sigma}}  m_2^{2|\alpha |}m_1 ^{2|\beta |},&
\quad \alpha &\in I_{2,m_2},\ \beta \in I_{1,m_1}.
\end{alignat*}
If $T _j$ is the operator with kernel $K _j$ for $j = 0, 1, 2$,
then it follows that $T _2 \circ T _0 \circ T _1 = T$. The result therefore follows if we
prove
\begin{equation}\label{Eq:FinalEsts}
\begin{aligned}
|a_{j,\alpha ,\beta}| &\lesssim h^{|\alpha +\beta|}(\alpha !\beta !)^{-\frac 1{4\sigma}},\ \forall h>0,\ j=1,2,
\quad \text{and}\quad
\\[1ex]
|a_{0,\alpha ,\beta}| &\lesssim e^{-c(|\alpha |+|\beta |)},\ \forall c>0,
\end{aligned}
\end{equation}
and since
$$
\underset {m\le R+1}{\textstyle{\bigcup}}I_{j,m}
\quad \text{and}\quad
\underset {m_1+m_2\le 2R}{\textstyle{\bigcup}}I_{2,m_2}\times I_{1,m_1}
$$
are finite sets and $R>1$ is arbitrary, it suffices to prove
\begin{align}
\sup _{m> R+1}\sup _{\alpha \in I _{j,m}} 
| (\alpha ! \beta !) ^{\frac 1{4\sigma}}R ^{|\alpha + \beta|} a_{j, \alpha ,\beta}| &<\infty ,
\quad j=1,2,\label{Eq:FinalEsts2A}
\intertext{and}
\sup _{(m_1,m_2)\in M_k}\sup _{\alpha \in I _{2,m_2}}
\sup _{\beta \in I _{1,m_1}} | R ^{|\alpha | + |\beta|} a_{0, \alpha ,\beta}| &<\infty ,\quad k=1,2,3,
\label{Eq:FinalEsts2B}
\end{align}
where
\begin{align*}
M_1 &= \sets {(m_1,m_2)\in \mathbf Z^2_+}{m_1\ge 2R-1,\ m_2=1}
\\[1ex]
M_2 &= \sets {(m_1,m_2)\in \mathbf Z^2_+}{m_2\ge 2R-1,\ m_1=1}
\intertext{and}
M_3 &= \sets {(m_1,m_2)\in \mathbf Z^2_+}{m_1+m_2\ge 2R,\ m_1,m_2\ge 2}
\end{align*}

\par

We have
\begin{multline*}
\sup _{m> R+1}\sup _{\alpha \in I _{j,m}} 
| (\alpha ! \beta !) ^{\frac 1{4\sigma}}R ^{|\alpha + \beta|} a_{j, \alpha ,\beta}|
\\[1ex]
=  \sup _{m> R+1}\sup _{\alpha , \beta \in I _{j,m}} 
|\delta _{\alpha, \beta}R ^{|\alpha + \beta|} m ^{-|\alpha + \beta|} |
< \infty, \quad j = 1, 2,
\end{multline*}
and \eqref{Eq:FinalEsts2A} follows.

\par

Next we prove \eqref{Eq:FinalEsts2B}, and start with the case $k=1$.
By definitions we have
$$
\alpha \in I_{2,1},\quad \beta \in I_{1,m_1},\quad m_1>2R-1>R>1
\quad \text{and}\quad
m_2=1.
$$
Then
$$
|a_{\alpha ,\beta}|\le m_1^{-6(|\alpha |+|\beta |)}(\alpha !\beta !)^{-\frac 1{2\sigma}},
$$
which implies
$$
|a_{0,\alpha ,\beta}|=|a_{\alpha ,\beta}|(\alpha !\beta !)^{\frac 1{2\sigma}}m_1^{2|\beta |}
\le m_1^{-4(|\alpha |+|\beta |)}.
$$
Hence,
$$
\sup _{(m_1,m_2)\in M_1}\sup _{\alpha \in I _{2,1}}
\sup _{\beta \in I _{1,m_1}} | R ^{|\alpha | + |\beta|} a_{0, \alpha ,\beta}|
\le
\sup _{m_1>R>1}R ^{|\alpha | + |\beta|} m_1^{-4(|\alpha | + |\beta|)} <\infty ,
$$
and \eqref{Eq:FinalEsts2B} follows in the case $k=1$.

\par

In the same way, \eqref{Eq:FinalEsts2B} follows in the case $k=2$.

\par

Next we prove \eqref{Eq:FinalEsts2B} in the case $k=3$. By the definitions
it follows that if $\alpha \in I_{2,m_2}$, then
\begin{align*}
|a_{\alpha ,\beta}| &\le m_2^{-6(|\alpha |+|\beta |)}(\alpha !\beta !)^{-\frac 1{2\sigma}},
\quad \forall \ \beta \in \nn {d_1},
\intertext{and if $\beta \in I_{1,m_1}$, then}
|a_{\alpha ,\beta}| &\le m_1^{-6(|\alpha |+|\beta |)}(\alpha !\beta !)^{-\frac 1{2\sigma}},
\quad \forall \ \alpha \in \nn {d_2}.
\intertext{Hence, if $\alpha \in I_{2,m_2}$ and $\beta \in I_{1,m_1}$, the geometric mean of
the latter inequalities becomes}
|a_{\alpha ,\beta }| &\le (m_1m_2)^{-3(|\alpha |+|\beta |)}(\alpha !\beta !)^{-\frac 1{2\sigma}},
\intertext{giving that}
|a_{0,\alpha ,\beta }| &\le (m_1m_2)^{-(|\alpha |+|\beta |)}.
\end{align*}

\par

This gives
\begin{multline*}
\sup _{(m_1,m_2)\in M_3}\sup _{\alpha \in I _{2,m_2}}\sup _{\beta \in I _{1,m_1}} 
| R ^{|\alpha | + |\beta|} a_{0, \alpha ,\beta}|
\\[1ex]
=  \sup _{(m_1,m_2)\in M_3}\sup _{\alpha \in I _{2,m_2}}\sup _{\beta \in I _{1,m_1}} 
R ^{|\alpha | + |\beta|} (R(m_1m_2)^{-1})^{|\alpha |+|\beta |}
< \infty ,
\end{multline*}
and \eqref{Eq:FinalEsts2B}, and thereby \eqref{Eq:FinalEsts} follow.

\par

By \eqref{Eq:FinalEsts} it follows that
$K_0 \in \maclH _{0, 1/2}(\rr {d _2 + d _1})$,
$K _1 \in \maclH _{0, \flat _{2\sigma }} (\rr {d _1 + d _1})$ and
$K_2 \in \maclH _{0, \flat _{2\sigma }}(\rr {d _2 + d _2})$. Furthermore,
$T_1$ and $T_2$ are positive semi-definite Hermite diagonal operators by construction.
\end{proof}

\par

\begin{rem}\label{Rem:DiagFactorizations}
Let $\sigma >0$ and $T\ge 0$ be a Hermite diagonal operator on $L^2(\rr d)$
with kernel $K$ in $\maclH _{\flat _\sigma}$.
By the proof of Theorem \ref{FLkernels},
Then there are
Hermite diagonal operators $T_1\ge 0$ and $T_2\ge 0$ on $L^2(\rr d)$ with
kernels $K_1$ and $K_2$ such that
$$
K_1\in \maclH _{\flat _\sigma}(\rr {2d}),\quad K_2\in \maclH _{1/2}(\rr {2d})
\quad \text{and}\quad T=T_1\circ T_2=T_2\circ T_1.
$$

\par

In fact, if $K$ is given by \eqref{Eq:KDefAgain} with $d_1=d_2=d$, it suffices
to let $K_1$ and $K_2$ be given by \eqref{Eq:K1Def} and \eqref{Eq:K2Def},
with
$$
a _{1, \alpha, \beta} = (a _{\alpha, \beta})^{1/2} (\alpha !\beta !)^{-\frac 1{4\sigma}}
\quad \text{and}\quad
a _{2, \alpha, \beta} = (a _{\alpha, \beta})^{1/2}(\alpha !\beta !)^{\frac 1{4\sigma}}.
$$
%
%
\end{rem}

\par

\begin{rem}
From the construction of $K_1$ and $K_2$ in the proofs of Theorems \ref{GSkernels}
and \ref{FLkernels}, it follows
that it is not so complicated for using numerical methods when
obtaining approximations of candidates to $K_1$ and $K_2$. In fact, $K_1$ and $K_2$ are formed
explicitly by the elements of the matrix for $T$, when the Hermite functions are used as
basis for $\mathscr S$, $\maclS _s$ and $\Sigma _s$.
\end{rem}

\par

The following result is an immediate consequence of Theorem \ref{GSkernels} and
the fact that the map $a\mapsto K_{a,t}$ is continuous and bijective on $\maclS
_{s_1}(\rr {2d})$, and on $\Sigma _{s_2}(\rr {2d})$, for every $s_1\ge 1/2$, $s_2>1/2$
and $t\in \mathbf R$.

\par

\begin{thm}\label{GSpseudo}
Let $A$ be a real $d\times d$-matrix, $s_1\ge \frac 12$ and let
$s_2>\frac 12$. Then the following is true:
\begin{enumerate}
\item if $a\in \maclH _{s_1}(\rr {2d})$, then there are $a_1,a_2\in \maclH
_{s_1}(\rr {2d})$ such that $a=a_1\wpr _Aa_2$;

\vrum

\item  if $a\in \maclH _{0,s_2}(\rr {2d})$, then there are
$a_1,a_2\in \maclH _{0,s_2}(\rr {2d})$ such that $a=a_1\wpr _Aa_2$.
\end{enumerate}
\end{thm}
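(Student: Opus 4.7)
The plan is to transfer the factorization property established in Theorem \ref{GSkernels} from the level of kernels to the level of symbols, via the symbol-to-kernel correspondence for $A$-pseudo-differential operators. The first ingredient I would invoke is Pilipovi{\'c}'s identification $\maclH_{s_1}(\rr{2d}) = \maclS_{s_1}(\rr{2d})$ for $s_1 \ge \tfrac 12$ and $\maclH_{0,s_2}(\rr{2d}) = \bsySig_{s_2}(\rr{2d})$ for $s_2 > \tfrac 12$ recorded in Subsection \ref{subsec1.1}. The second ingredient is the fact, quoted explicitly just before the statement, that the kernel map $a \mapsto K_{a,A}$ is a continuous bijection (with continuous inverse) on each of these Gelfand--Shilov spaces. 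Hence a symbol belongs to the Pilipovi{\'c} space exactly when its $A$-kernel does.

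For part (1), given $a \in \maclH_{s_1}(\rr{2d})$, I would set $T = \Op_A(a)$, so that its kernel $K$ lies in $\maclH_{s_1}(\rr{2d})$. Applying Theorem \ref{GSkernels}(1) in the case $d_0 = d_1 = d_2 = d$ produces operators $\widetilde T_1$ and $\widetilde T_2$, with kernels $\widetilde K_1, \widetilde K_2 \in \maclH_{s_1}(\rr{2d})$, such that $T = \widetilde T_2 \circ \widetilde T_1$. Pulling $\widetilde K_j$ back through the inverse kernel map gives symbols $b_j \in \maclH_{s_1}(\rr{2d})$ with $\Op_A(b_j) = \widetilde T_j$. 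Relabelling $a_1 := b_2$ and $a_2 := b_1$ and invoking the definition of the $A$-twisted product, namely $\Op_A(a_1) \circ \Op_A(a_2) = \Op_A(a_1 \wpr_A a_2)$, yields $a = a_1 \wpr_A a_2$ with the desired regularity.

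Part (2) is handled verbatim by the same procedure, replacing Theorem \ref{GSkernels}(1) by Theorem \ref{GSkernels}(2), the Roumieu space $\maclH_{s_1}$ by the Beurling space $\maclH_{0,s_2}$, and the Gelfand--Shilov space $\maclS_{s_1}$ by $\bsySig_{s_2}$; all the relevant identifications and the bijectivity of the kernel map hold in this setting as well.

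There is essentially no obstacle here: the genuinely technical step, the factorization of kernels with explicit control of the exponential (or Roumieu-type) estimates, has already been settled in Theorem \ref{GSkernels}. The only bookkeeping to be careful about is the order of composition versus the order of factors in $\wpr_A$, which is handled by the relabelling in step (1); once that is fixed, the argument is an immediate translation through the kernel-symbol correspondence.
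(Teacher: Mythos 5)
Your argument is exactly the route the paper takes: the paper states that Theorem \ref{GSpseudo} follows immediately from Theorem \ref{GSkernels} combined with the bijectivity of the symbol-to-kernel map $a\mapsto K_{a,t}$ on $\maclS_{s_1}(\rr{2d})$ and $\Sigma_{s_2}(\rr{2d})$, which is precisely the transfer you carry out. Your additional care with the ordering of factors in $\wpr_A$ versus composition order is a correct and welcome bit of bookkeeping that the paper leaves implicit.
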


\par

\begin{rem}
Extensions of Theorem \ref{GSpseudo} to the case $s_1,s_2\ge 0$ is not so smooth, because
the Pilipovi{\'c} spaces which are not Gelfand-Shilov spaces are not invariant under
dilations. However, if $A$ is a real $d\times d$ matrix and $a\in \mascS (\rr {2d})$
is such that the kernel
\begin{equation}\label{Eq:OpAKernel}
(x,y)\mapsto (\mascF _2^{-1}a)(x-A(x-y),x-y)
\end{equation}
belongs to $\maclH _s(\rr {2d})$, then we may apply Theorem \ref{GSkernels} in
this situation as well.

\par

Therefore, let $\mathcal G_{A,s}(\rr {2d})$ ($\mathcal G_{0,A,s}(\rr {2d})$)
be the set of all $a\in \mascS (\rr {2d})$ such that the map in \eqref{Eq:OpAKernel}
belongs to $\maclH _s(\rr {2d})$ ($\maclH _{0,s}(\rr {2d})$). If
$a\in \mathcal G_{A,s}(\rr {2d})$ ($a\in \mathcal G_{0,A,s}(\rr {2d})$), then
there are $a_1,a_2\in \mathcal G_{A,s}(\rr {2d})$ ($a_1,a_2\in \mathcal G_{0,A,s}(\rr {2d})$)
such that $a=a_1\wpr _Aa_2$.
\end{rem}

\par

\section{Singular value estimates and Schatten-von Neumann
properties for operators with Gelfand-Shilov kernels}\label{sec4}

\par

In this section we use Theorem \ref{GSkernels} to
obtain estimates of the form \eqref{Eq:StrongSVEst1} for operators $T$ with kernels
in Pilipovi{\'c} spaces of order $s$, provided $\mascB _1$ and $\mascB _2$ stay
between the given Pilipovi{\'c} space and its dual. In particular it follows that any
such operator belongs to any Schatten-von Neumann class.

\par

We start by recalling the definition of quasi-Banach spaces. Let $\mascB$ be a
vector space. A
\emph{quasi-norm} $\nm \cdo {\mascB}$ on $\mascB$ is a non-negative and
real-valued function on $\mascB$ which is non-degenerate in the sense
$$
\nm {f}{\mascB}=0\qquad \Longleftrightarrow \qquad f=0,\quad f\in \mascB,
$$
and fulfills
\begin{alignat}{2}
\nm {\alpha f}{\mascB} &= |\alpha |\cdot \nm f{\mascB}, &\qquad 
f &\in \mascB ,\ \alpha \in \mathbf C\notag
\intertext{and}
\nm {f+g}{\mascB} &\le D(\nm f{\mascB} +\nm g{\mascB}),
&\qquad f,g &\in \mascB , \label{weaktriangle}
\end{alignat}
for some constant $D\ge 1$ which is independent of $f,g\in \mascB$. Then
$\mascB$ is a topological vector space when the topology for $\mascB$
is defined by $\nm \cdo{\mascB}$, and $\mascB$ is called a quasi-Banach
space if $\mascB$ is complete under this topology.

\par

Let $\mascB _1$ and $\mascB _2$ be (quasi-)Banach spaces, and let
$T$ be a linear map between $\mascB _1$ and $\mascB _2$. Then
recall that the \emph{singular values} of order $j\ge 1$ of
$T$ is given by \eqref{Eq:SingValDef}, where
the infimum is taken over all linear operators $T_0$ from $\mascB _1$
to $\mascB _2$ with rank at most $j-1$. Therefore, $\sigma _1(T)$
equals $\nm T{\mascB _1\to \mascB _2}$, and $\sigma _k(T)$ are 
non-negative which decrease with $k$.

\par

For any $p\in (0,\infty ]$, $\mascI _p(\mascB _1,\mascB _2)$, the set of
Schatten-von Neumann operators of order $p$ is the set of all
linear and continuous operators $T$ from $\mascB _1$ to $\mascB _2$
such that
$$
\nm T{\mascI _p} = \nm T{\mascI _p(\mascB _1,\mascB _2)}
\equiv \nm { ( \sigma _k(T,\mascB _1,\mascB _2) ) _{k=1}^\infty}{l^p}
$$
is finite.

\par

In the following result we show that
the singular values for operators $T_K$ with kernels $K$ in
Pilipovi{\'c} spaces or Schwartz spaces, obey estimates of the form
\begin{align}
\sigma _k(T_K,\mascB _1,\mascB _2) &\lesssim e^{-ck^{\frac 1{2ds}}},
\label{SingValPilSpEst}
\\[1ex]
\sigma _k(T_K,\mascB _1,\mascB _2) &\lesssim R^{k} (k!)^{-\frac 1{2\sigma d}}
\label{SingValPilSpEst2}
\intertext{or}
\sigma _k(T_K,\mascB _1,\mascB _2) &\lesssim k^{-N}.
\label{SingValSchwSpEst}
\end{align}
We observe that (5) should be available in the literature.

\par

\begin{thm}\label{thmSchattGSkernels}
Let 
$p\in (0,\infty ]$, $s\ge 0$, $\sigma >0$ and let
$d=\min (d_1,d_2)$. Then the following is true:
\begin{enumerate}
\item if $K\in \maclH _s(\rr {d_2+d_1})$, and
$\mascB _1$ and $\mascB _2$ are quasi-Banach spaces
such that $\mascB _1\hookrightarrow \maclH _s'(\rr {d_1})$ and
$\maclH _s(\rr {d_2})\hookrightarrow \mascB _2$, then \eqref{SingValPilSpEst}
holds for some $c>0$. In particular,
$T_K\in \mascI _p(\mascB _1,\mascB _2)$;

\vrum

\item if $K\in \maclH _{0,s}(\rr {d_2+d_1})$, and $\mascB _1$ and
$\mascB _2$ are quasi-Banach spaces
such that $\mascB _1\hookrightarrow \maclH _{0,s}'(\rr {d_1})$
and $\maclH _{0,s}(\rr {d_2})\hookrightarrow \mascB _2$, then \eqref{SingValPilSpEst} holds
for every $c>0$. In particular, $T_K\in \mascI _p(\mascB _1,\mascB _2)$;

\vrum

\item if $K\in \maclH _{\flat _\sigma}(\rr {d_2+d_1})$, and $\mascB _1$ and
$\mascB _2$ are quasi-Banach spaces
such that $\mascB _1\hookrightarrow \maclH _{1/2}'(\rr {d_1})$
and $\maclH _{1/2}(\rr {d_2})\hookrightarrow \mascB _2$, then
\eqref{SingValPilSpEst2} holds
for some $c>0$. In particular, $T_K\in \mascI _p(\mascB _1,\mascB _2)$;

\vrum

\item if $K\in \maclH _{0,\flat _\sigma}(\rr {d_2+d_1})$, and
$\mascB _1$ and $\mascB _2$ are quasi-Banach spaces
such that $\mascB _1\hookrightarrow \maclH _{0,1/2}'(\rr {d_1})$
and $\maclH _{0,1/2}(\rr {d_2})\hookrightarrow \mascB _2$,
then \eqref{SingValPilSpEst2} holds
for every $c>0$. In particular, $T_K\in \mascI _p(\mascB _1,\mascB _2)$;

\vrum

\item if $K\in \mascS (\rr {d_2+d_1})$, and $\mascB _1$ and
$\mascB _2$ are quasi-Banach spaces
such that $\mascB _1\hookrightarrow \mascS '(\rr {d_1})$ and
$\mascS (\rr {d_2})\hookrightarrow \mascB _2$,
then \eqref{SingValSchwSpEst} holds
for every $N>0$. In particular, $T_K\in \mascI _p(\mascB _1,\mascB _2)$.
\end{enumerate}
\end{thm}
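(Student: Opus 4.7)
\emph{Strategy.} The plan is to combine the factorization theorems of Section \ref{sec3} (Theorems \ref{GSkernels} and \ref{FLkernels}) with the multiplicative property of singular values, $\sigma_{k_1+k_2-1}(AB) \leq \sigma_{k_1}(A)\sigma_{k_2}(B)$, and an explicit counting of multi-indices in $\nn {d}$. For each case one writes $T_K$ as a composition in which one (or both) of the outer factors is a positive semi-definite Hermite diagonal operator whose eigenvalues are known explicitly from the construction, and a finite-rank truncation of the diagonal factor produces the desired rank-$k$ approximation.

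\emph{Cases (1)--(2).} Assume without loss of generality $d = d_1 \leq d_2$ (otherwise pass to the adjoint $T_K^*$, whose kernel lies in the same class and whose singular values coincide with those of $T_K$). Theorem \ref{GSkernels} with $d_0 = d_1$ and $j = 1$ gives a factorization $T_K = T_2 \circ T_1$ with $T_1 h_\alpha = \lambda_\alpha h_\alpha$, $\lambda_\alpha \geq 0$ and $\lambda_\alpha \lesssim e^{-c|\alpha|^{1/(2s)}}$ for some (respectively every) $c > 0$, and $T_2$ having kernel in the same Pilipovi{\'c} space. Enumerate $\nn {d_1} = \{\alpha^{(1)}, \alpha^{(2)}, \ldots\}$ in decreasing order of $\lambda_{\alpha^{(j)}}$, and let $T_{1,k}$ be the rank-$k$ truncation keeping only the first $k$ indices. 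Since $T_2 \circ T_{1,k}$ has rank at most $k$,
\begin{equation*}
\sigma_{k+1}(T_K,\mascB _1,\mascB _2) \leq \nm{T_2}{L^2(\rr {d_1}) \to \mascB _2}\cdot \nm{T_1-T_{1,k}}{\mascB _1 \to L^2(\rr {d_1})}.
\end{equation*}
The continuous embedding $\maclH _s(\rr {d_2}) \hookrightarrow \mascB _2$, together with the fact that $T_2$ (having $\maclH _s$-kernel) maps $L^2(\rr {d_1})$ into $\maclH _s(\rr {d_2})$, makes the first factor finite. For the second, the embedding $\mascB _1 \hookrightarrow \maclH _s'(\rr {d_1})$ yields a Hermite-coefficient bound $|c_\alpha(f)| \leq C_r \nm{f}{\mascB _1} e^{r|\alpha|^{1/(2s)}}$ valid for every $r > 0$ in case (1), for some $r > 0$ in case (2). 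Choosing $r < c/2$ then gives
\begin{equation*}
\nm{(T_1-T_{1,k})f}{L^2}^2 = \sum_{j > k} \lambda_{\alpha^{(j)}}^2 |c_{\alpha^{(j)}}|^2 \lesssim \nm{f}{\mascB _1}^2 \sum_{|\alpha| > N_k} e^{-2(c-r)|\alpha|^{1/(2s)}},
\end{equation*}
where $N_k \sim k^{1/d_1}$ by the count $\# \sets{\alpha \in \nn {d_1}}{|\alpha| \leq N} \sim N^{d_1}$. The tail is dominated by its leading term up to a polynomial factor, producing \eqref{SingValPilSpEst}.

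\emph{Remaining cases and main obstacles.} For the $\flat _\sigma$ cases (3)--(4), use the three-factor decomposition $T_K = T_2 \circ T_0 \circ T_1$ of Theorem \ref{FLkernels}, in which both $T_1$ and $T_2$ can be chosen Hermite diagonal with factorial-type eigenvalues $R^{2|\alpha|}(\alpha!)^{-1/(2\sigma)}$, and the middle factor $T_0$ has $\maclH _{1/2}$ (respectively $\maclH _{0,1/2}$) kernel and so is bounded on $L^2$. Combining the multiplicative inequality $\sigma_{k_1+k_2-1}(T_2 T_0 T_1) \leq \sigma_{k_1}(T_2) \nm{T_0}{} \sigma_{k_2}(T_1)$ with the eigenvalue decay and multi-index counting yields \eqref{SingValPilSpEst2}. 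Case (5) for Schwartz kernels is handled by the analogous, well-known factorization for $\mascS $-kernels, whose diagonal factor has polynomially decaying eigenvalues; this yields \eqref{SingValSchwSpEst}. The Schatten-von Neumann membership $T_K \in \mascI _p(\mascB _1,\mascB _2)$ for every $p > 0$ is immediate from the summability of $\{\sigma_k\}$ for each such decay rate. The main technical obstacles are (i) the uniform Hermite-coefficient bound in the inductive-limit cases, which requires that the embedding $\mascB _1 \hookrightarrow \maclH _{0,s}'$ or $\mascB _1 \hookrightarrow \maclH _{\flat _\sigma}'$ factors through a single weighted Banach space $\maclH ^\infty _{[1/\vartheta]}$, relying on the (DFS)-type structure of the target; and (ii) the fine counting in the $\flat _\sigma$ case, where one must order multi-indices by the magnitude of $R^{2|\alpha|}(\alpha!)^{-1/(2\sigma)}$ rather than by $|\alpha|$ alone to extract the sharp factorial rate $(k!)^{-1/(2\sigma d)}$.
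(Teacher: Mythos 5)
Your overall strategy is the same as the paper's: factorize via Theorems \ref{GSkernels} and \ref{FLkernels}, isolate a positive semi-definite Hermite diagonal factor, estimate singular values by truncating that factor, and count multi-indices; case (5) and the final Schatten--von Neumann conclusion are handled as in the paper. Two steps, however, contain genuine gaps.

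First, the reduction ``WLOG $d=d_1\le d_2$, otherwise pass to the adjoint $T_K^*$, whose singular values coincide with those of $T_K$'' is not valid in the stated quasi-Banach generality. The singular values here are approximation numbers between quasi-Banach spaces; $\mascB _2$ may have trivial dual (e.g.\ $\mascB _2=L^p(\rr {d_2})$ with $0<p<1$ satisfies $\maclH _s(\rr {d_2})\hookrightarrow \mascB _2$), in which case $T_K^*$ carries no information, and even when the duals are nontrivial the adjoint need not satisfy the hypotheses, since $\mascB _2'\hookrightarrow \maclH _s'(\rr {d_2})$ and $\maclH _s(\rr {d_1})\hookrightarrow \mascB _1'$ do not follow by duality here. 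The repair stays inside your own argument: Theorem \ref{GSkernels} allows you to fix $j$ with $d_j=\min (d_1,d_2)$ and take $d_0=d_j$, so the Hermite diagonal factor can always be placed on the smaller-dimensional side (next to $\mascB _1$ if $d_1\le d_2$, next to $\mascB _2$ if $d_2<d_1$) and truncated there; no adjoint of $T_K$ between $\mascB _1$ and $\mascB _2$ is needed. The paper proceeds differently: it first proves the estimate with $d_0=\max (d_1,d_2)$, using the three-factor form with the diagonal factor in the middle acting on $L^2(\rr {d_0})$, and then upgrades $\max$ to $\min$ through $T_K^*T_K$ and $T_KT_K^*$ together with $\sigma _k(T_K^*T_K)=\sigma _k(T_K)^2$. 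Your direct handling of the embeddings (coefficient bounds for every $r$ in the Roumieu case, factorization through a single step of the inductive limit via the (DFS) structure in the Beurling case) is correct and is needed precisely because your diagonal factor sits next to $\mascB _1$; in the paper's arrangement the outer kernel factors absorb the embeddings, so no such bounds are required.

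Second, in cases (3)--(4) the decisive step is exactly the one you leave as an assertion: that the decreasing rearrangement of the eigenvalues $R^{2|\alpha |}(\alpha !)^{-1/(2\sigma )}$, $\alpha \in \nn {d_j}$, of the diagonal $\flat _{2\sigma}$-factors obeys \eqref{SingValPilSpEst2}. Ordering the multi-indices by the size of $R^{2|\alpha |}(\alpha !)^{-1/(2\sigma )}$, as you propose, gives for the $k$-th largest value a quantity of order $\exp \big( -\tfrac 1{2\sigma}N\log N+O(N)\big)$ with $N\asymp k^{1/d_j}$, i.e.\ roughly $C^{k^{1/d_j}}\big( (k^{1/d_j})!\big) ^{-1/(2\sigma )}$. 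This coincides with $R^k(k!)^{-1/(2\sigma d_j)}$ only when $d_j=1$; for $d_j\ge 2$ it is far larger, and feeding it into your inequality $\sigma _{k_1+k_2-1}(T_2T_0T_1)\le \sigma _{k_1}(T_2)\, \nm {T_0}{L^2\to L^2}\, \sigma _{k_2}(T_1)$ at best squares the rate, which still falls well short of \eqref{SingValPilSpEst2}. So the counting you sketch does not deliver the claimed factorial rate when $d\ge 2$, and this step needs a genuine argument. (The paper's proof of (3) reduces to the same claim for $\sigma _k(T_{K_j},L^2,L^2)$ and dispatches it with ``similar arguments as in the proof of (1)''; your proposal, as written, does not close that step either, and your own enumeration indicates that only the slower rate $\exp \big( -c\, k^{1/d}\log k\big)$ is reachable by truncating the diagonal factors.)
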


\par

We need some preparations for the proof. First
we recall that if $\mascB _j$, $j=0,1,2$, are quasi-Banach
spaces and $T_j$ are linear and continuous mappings from
$\mascB _{j-1}$ to $\mascB _j$, $j=1,2$, then
\begin{align}
\sigma _k(T_2\circ T_1,\mascB _0,\mascB _2) &\le C \nm {T_1}{\mascB _0\to \mascB _1}
\sigma _k(T_2,\mascB _1,\mascB _2)\label{EqSingValEst1}
\intertext{and}
\sigma _k(T_2\circ T_1,\mascB _0,\mascB _2) &\le C \nm {T_2}{\mascB _1\to \mascB _2}
\sigma _k(T_1,\mascB _0,\mascB _1).\label{EqSingValEst2}
\end{align}

\par

In fact, if $\Omega _{j,l}(k)$ is the set of all linear operators from $\mascB _j$
to $\mascB _l$ with rank at most $k-1$, then
\begin{multline*}
\sigma _k(T_2\circ T _1,\mascB _0,\mascB _2) = \inf _{S\in \Omega _{0,2}(k)}
\nm {T_2\circ T_1-S}{\mascB _0\to \mascB _2}
\\[1ex]
\le \inf _{T_0\in \Omega _{0,1}(k)}
\nm {T_2\circ T_1-T_2\circ T_0}{\mascB _0\to \mascB _2}
\\[1ex]
\le C \nm {T_2}{\mascB _1\to \mascB _2}\left (  \inf _{T_0\in \Omega _{0,1}(k)}
\nm {T_1-T_0}{\mascB _0\to \mascB _1} \right )
\\[1ex]
=C\nm {T_2}{\mascB _1\to \mascB _2} \sigma _k(T _1,\mascB _0,\mascB _1),
\end{multline*}
which gives \eqref{EqSingValEst2}. In the same way \eqref{EqSingValEst1}
is obtained. (See also \cite{Si}.)

\par

\begin{proof}[Proof of Theorem \ref{thmSchattGSkernels}]
We only prove (1), (3) and (5). The assertions (2) and (4) follow by
similar arguments and are left for the reader.

\par

(1) First we prove the result with $d_0=\max (d_1,d_2)$ in place of $d$.
By Theorem \ref{GSkernels} we get
\begin{equation}\label{Eq:T_KFact321}
T_K = T_{K_3}\circ T_{K_2}\circ T_{K_1},
\end{equation}
where the kernels $K_1$, $K_2$ and $K_3$ of the operators $T_{K_1}$,
$T_{K_2}$ and $T_{K_3}$ belong to $\maclH _s(\rr {{d_0}+d_1})$,
$\maclH _s(\rr {{d_0}+{d_0}})$ and $\maclH _s(\rr {d_2+{d_0}})$, respectively. Furthermore,
we may assume that $T_{K_2}$ is a positive semi-definite Hermite diagonal
operator.

\par

It follows that $T_{K_1}$ is continuous from $\mascB _1$ to $L^2(\rr {d_0})$,
and $T_{K_3}$ is continuous from $L^2(\rr {d_0})$ to $\mascB _2$. Hence, by
\eqref{EqSingValEst1} and \eqref{EqSingValEst2} it suffices to prove that if
$T=T_{K_2}$, then
\begin{equation}\label{EqRedSingEst}
\sigma _k = \sigma _k(T,L^2(\rr {d_0}),L^2(\rr {d_0})) \le Ce^{-ck^{\frac 1{2{d_0}s}}}.
\end{equation}

\par

By the constructions we have
\begin{align}
K_2(x,y) &= \sum _{\alpha \in \nn {d_0}}c_\alpha h_\alpha (x)h_\alpha (y)
\label{Eq:K2HemrExp}
\intertext{where}
0\le c_\alpha \lesssim e^{-c|\alpha |^{\frac 1{2s}}}
\label{EqDiagCoeffEst}
\end{align}
for some constant $c>0$. Furthermore, $\{ h_\alpha \} _{\alpha \in \nn {d_0}}$ is an
orthonormal basis of eigenfunctions with eigenvalues $\{ c_\alpha \} _{\alpha \in \nn {d_0}}$
of $T$.

\par

Now let $M_{N,{d_0}}$ be the number of all multi-indices $\alpha \in \nn {d_0}$
such that $|\alpha |\le N$. Then $M_{N,{d_0}}\asymp \eabs N^{d_0}$. Since
the singular values are the eigenvalues of $T_{K_2}$ in decreasing order,
\eqref{EqDiagCoeffEst} gives
$$
\sigma _k(T_{K_2}) \lesssim e^{-cN ^{\frac 1{2s}}}
$$
for some $c>0$ when $M_{N-1,{d_0}}<k\le M_{N,{d_0}}$. For such $k$
we also have $k\asymp N^{d_0}$, since $(N-1)^{d_0}\asymp N^{d_0}$.
By combining these estimates we get
$$
\sigma _k (T_K)\lesssim e^{-cN ^{\frac 1{2s}}} \lesssim e^{-c_0k ^{\frac 1{2{d_0}s}}},
$$
for some constant $c_0$. This gives \eqref{SingValPilSpEst} with $d_0$ in place of $d$.

\par

It remains to prove that we may replace $d_0$ by $d$ in our estimates.
We consider
$$
T_1 =T^*_K\circ T_K
\quad \text{and}\quad
T_2 =T_K\circ T_K^*,
$$
which are non-negative and with kernels $K_1$ and $K_2$ in
$\maclH _s(\rr {d_1+d_1})$ and $\maclH _s(\rr {d_2+d_2})$,
respectively. Hence, by the first part of the proof we get
\begin{equation}\label{Eq:SingUppsk}
\sigma _k(T_1) \lesssim e^{-ck^{\frac 1{2d_1s}}}
\quad \text{and}\quad
\sigma _k(T_2) \lesssim e^{-ck^{\frac 1{2d_2s}}}.
\end{equation}
Since
\begin{equation}\label{Eq:SingEqRel}
\sigma _k(T_1) = \sigma _k(T_2) = \sigma _k(T_K)^2,
\end{equation}
\eqref{SingValPilSpEst} now follows from \eqref{Eq:SingUppsk}
and \eqref{Eq:SingEqRel}.

\par

(3) By Theorem \ref{FLkernels} and Remark \ref{Rem:DiagFactorizations}, we get
\begin{equation}\label{Eq:TKComps5times}
T _K = T _{K _{0,2}} \circ T _{K _2} \circ T _{K _0} \circ T _{K _1} \circ T _{K _{0,1}},
\end{equation}
where the corresponding kernels satisfy
$$
K_{0,j} \in \maclH _{1/2} (\rr {d_j+d _j}), \quad K_j \in
\maclH _{\flat _{2\sigma}}(\rr {d_j+d_j}) ,\quad
\text{and}\quad K_0\in \maclH _{1/2} (\rr {d_2+d_1}),
$$
$j=1,2$.
%
%
%
%
%
Furthermore, all kernels except $K_0$ to the operators in
\eqref{Eq:TKComps5times} are positive semi-definite Hermite diagonal
operators.

\par

It follows that
\begin{gather*}
T _{K _{0,1}}\, :\, \mascB _1 \to L ^2(\rr {d_1}),
\quad
T_{K_0}\, :\, L^2(\rr {d_1}) \to L^2(\rr {d_2})
\\[1ex]
\text{and}\quad
T _{K _{0,2}}\, :\, L ^2(\rr {d_2}) \to \mascB _2
\end{gather*}
are continuous. By similar arguments as in the proof of
(1), we now get
$$
\sigma _k(T _{K _{j}}, L ^2({\rr {d_j}}), L ^2({\rr {d_j}}))
\lesssim R^{k} (k!)^{-\frac 1{2\sigma d_j}}, \quad j = 2, 4.
$$
Hence,
$$
\sigma _k(T _K,\mascB _1, \mascB _2)\lesssim R^{k} (k!)^{-\frac 1{2\sigma d_j}},
\quad j=1,2,
$$
in view of \eqref{EqSingValEst1}--\eqref{EqSingValEst2}. This gives (3).


\par

(5) By \cite{Beals, Kl, Sad, Vo} we get
\begin{equation}\label{Eq:T_KFact321}
T_K = T_{K_3}\circ T_{K_2}\circ T_{K_1},
\end{equation}
where the kernels $K_1$, $K_2$ and $K_3$ of the operators $T_{K_1}$,
$T_{K_2}$ and $T_{K_3}$ belong to $\mascS (\rr {d_0+d_1})$,
$\mascS (\rr {d_0+d_0})$ and $\mascS (\rr {d_2+d_0})$, respectively. Furthermore,
we may assume that $T_{K_2}$ is a positive semi-definite Hermite diagonal
operator (cf. e.{\,}g. \cite{ToKrNiNo}).

\par

It follows that $T_{K_1}$ is continuous from $\mascB _1$ to $L^2(\rr {d_0})$,
and $T_{K_3}$ is continuous from $L^2(\rr {d_0})$ to $\mascB _2$. Hence, by
\eqref{EqSingValEst1} and \eqref{EqSingValEst2} it suffices to prove that
for every $N>0$ there is a constant $C>0$ such that
\begin{equation}\label{EqRedSingEst2}
\sigma _k = \sigma _k(T_{K_2},L^2(\rr {d_0}),L^2(\rr {d_0})) \le Ck^{-N}.
\end{equation}

\par

By the constructions we have $K_2$ is given by \eqref{Eq:K2HemrExp}, where
$c_\alpha$ fulfills
$$
0\le c_\alpha \lesssim \eabs \alpha ^{-N},
$$
for every $N>0$. By the same arguments as in the first part of the proof we now get
$$
\sigma _k \lesssim k^{-\frac N{d_0}}
$$
for every $N$, and (5) follows. 

\par

Finally, by \eqref{SingValPilSpEst}--\eqref{SingValSchwSpEst} it also follows that
$\{ \sigma _k(T,\mascB _1,\mascB _2)\}$ belongs to $\ell ^p$ for every $p>0$.
This gives the second parts of (1)--(5). 
\end{proof}

\par

\section{Discrete characterizations of kernels to
smoothing operators}\label{sec5}

\par

In this section we show that any operators with kernels in
Gelfand-Shilov, Pilipovi{\'c} or Schwartz spaces can be characterized
by convenient expansions of the form
\begin{align}
K &= \sum _{j=1}^\infty \lambda _j f_{1,j}\otimes f_{2,j},
\label{Eq:KernelExp1}
\intertext{where}
\{\lambda _j \} _{j=1}^\infty &\subseteq \mathbf R_+,\quad
\{f_{k,j} \} _{j=1}^\infty \in \OG (L^2(\rr {d_k})),\ k=1,2. 
\label{Eq:KernelExp2}
\end{align}
Here $\OG (L^2(\rr d))$ is the set of all orthogonal sequences
$\{ f_j\} _{j=1}^\infty$ in $L^2(\rr d)$, i.{\,}e., $0\neq f_j\in L^2(\rr d)$,
and $f_{j_1}\bot f_{j_2}$when $j_1\neq j_2$. Note that we do \emph{not}
require that $f_j$ should be normalized with respect to $L^2(\rr d)$.

\par

For future references we also let $\ON (L^2(\rr d))$ be the set of all
orthonormal sequences in $L^2(\rr d)$.

\par

\begin{thm}\label{Thm:KernSchwSpaceChar}
Let $p\in [1,\infty ]$ and $T$ be a linear and continuous operator
from $\maclH _0(\rr {d_1})$ to $\maclH _0'(\rr {d_2})$ with
kernel $K\in C^\infty (\rr {d_2}\times \rr {d_1})$. Then the following is true:
\begin{enumerate}
\item if $K\in \mascS (\rr {d_2}\times \rr {d_1})$, then \eqref{Eq:KernelExp1}
holds for some sequences in \eqref{Eq:KernelExp2} such that
\begin{equation}\label{Eq:KernSchwSpaceChar}
\sup _{j\ge 1} \big (j^N\lambda _j \big ) <\infty
\quad \text{and}\quad
\sup _{j\ge 1}\left (  j^N\nm {x^\alpha D^\beta f_{k,j}}{L^p(\rr {d_k})}\right )
<\infty ,
\end{equation}
for $k=1,2$, $\alpha ,\beta \in \nn d$ and every $N\ge 0$.

\vrum

\item on the other hand, if instead $K\in C^\infty (\rr {d_2}\times \rr {d_1})$
and satisfies \eqref{Eq:KernelExp1} for some $\{ \lambda _j\}
_{j=1}^\infty \subseteq \mathbf R_+$, and
\eqref{Eq:KernSchwSpaceChar} holds for $k=1,2$ and every $N\ge 1$,
then $K\in \mascS (\rr {d_2}\times \rr {d_1})$.
\end{enumerate}
\end{thm}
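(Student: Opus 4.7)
Plan: For Part (1), begin with the $L^2$-SVD of $T_K$. Since $K\in\mascS(\rr {d_2+d_1})$, the operator $T_K\colon L^2(\rr {d_1})\to L^2(\rr {d_2})$ is compact (in fact nuclear), so the Hilbert--Schmidt spectral theorem gives
\[
K(x,y)=\sum_{j\ge 1}\mu_j\,g_j(x)\overline{e_j(y)}
\]
with $\{g_j\},\{e_j\}$ orthonormal sequences in the respective $L^2$ spaces and $\mu_j\ge 0$. Theorem~\ref{thmSchattGSkernels}~(5) gives $\mu_j=\sigma_j(T_K)\lesssim j^{-N}$ for every $N$, and Theorem~\ref{Thm:KernelsEasyDirection} ensures $g_j,e_j\in\mascS$.

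The crucial technical step is the sharp bound
\[
\nm{H^Ng_j}{L^2}\le C_{N,\epsilon}\,\mu_j^{-\epsilon}\qquad\text{for every }\epsilon>0,\ N\ge 0,
\]
and similarly for $e_j$. To establish it, write $u_{j,\alpha}=\langle g_j,h_\alpha\rangle$ and combine the trivial $|u_{j,\alpha}|\le\nm{u_j}{\ell^2}=1$ with $|u_{j,\alpha}|\le C_M\mu_j^{-1}\eabs\alpha^{-M}$ for every $M$ (which follows from $\mu_ju_j=Av_j$, the rapid decay of the Hermite matrix $A=(a_{\alpha,\beta})$ of $K$, and Cauchy--Schwarz). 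Splitting $\sum_\alpha\eabs\alpha^{2N}|u_{j,\alpha}|^2$ at the crossover $R=(C_M/\mu_j)^{1/M}$ where the two pointwise bounds coincide, both the low-frequency and high-frequency pieces are of order $\mu_j^{-(2N+d_2)/M}$; taking $M$ large relative to $N/\epsilon$ yields the claim.

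With this in hand, set $\lambda_j=\mu_j^{1/2}$ and $f_{1,j}=\mu_j^{1/4}g_j$, $f_{2,j}=\mu_j^{1/4}\overline{e_j}$. Then $\lambda_j\,f_{1,j}\otimes f_{2,j}$ reproduces $K$ term by term, the orthogonality of $\{f_{k,j}\}$ is inherited from the SVD, and $\lambda_j=\mu_j^{1/2}$ decays faster than any polynomial. For the seminorm condition in \eqref{Eq:KernSchwSpaceChar}, dominate $\nm{x^\alpha D^\beta f_{k,j}}{L^p}$ by $\nm{H^Nf_{k,j}}{L^2}$ for $N$ sufficiently large (a Hermite--Sobolev inequality, using that $x^\alpha$ and $D^\beta$ are polynomials in the creation/annihilation operators whose powers are controlled by $H$); then $\nm{H^Nf_{k,j}}{L^2}=\mu_j^{1/4}\nm{H^Ng_j}{L^2}\lesssim \mu_j^{1/4-\epsilon}$, which decays faster than any polynomial in $j$ as soon as $\epsilon<1/4$.

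For Part (2) the argument is a direct estimation. The Leibniz rule gives $x^\alpha D^\beta K(x,y)=\sum_j\lambda_j(x^{\alpha_1}D_x^{\beta_1}f_{1,j})(x)\cdot(y^{\alpha_2}D_y^{\beta_2}f_{2,j})(y)$ with $\alpha=(\alpha_1,\alpha_2)$, $\beta=(\beta_1,\beta_2)$. Converting the $L^p$ hypotheses to $L^\infty$ bounds by Sobolev embedding (trivial when $p=\infty$), each term is bounded by $\lambda_j\cdot C_Nj^{-2N}$ uniformly in $(x,y)$, and $\sum_j\lambda_jj^{-2N}$ converges by the rapid decay of $\lambda_j$, so $K\in\mascS$. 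The main obstacle of the whole argument is the sharp seminorm bound on the singular vectors in Part (1); once that is in place, the rest is essentially bookkeeping.
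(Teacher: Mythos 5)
Your proof is correct, and it diverges from the paper's at the crucial technical step. The paper handles the Schwartz case by the same template it uses for Theorem~\ref{Thm:KernPilSpaceChar}: it forms $T_1=(T^*\circ T)^{1/4}$ and $T_2=(T\circ T^*)^{1/4}$, invokes Proposition~\ref{Prop:SelfAdj} (Calder\'on--Lions interpolation via Lemma~\ref{Lemma:Interpolation1}) to place the kernels $K_1,K_2$ back in $\mascS$, and then extracts a bound $\lambda_{0,j}^{1/4}\nm{H^N f_j}{L^2}\lesssim h_0^N(N!)^{2s}$ from a trace-class norm estimate built on Lemma~\ref{Lemma:PilKernelChar1}. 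You instead work directly at the level of Hermite matrix coefficients: the matrix kernel theorem gives $|a_{\alpha,\beta}|\lesssim\eabs{(\alpha,\beta)}^{-M}$, Cauchy--Schwarz applied to $\mu_j u_j=Av_j$ gives $|u_{j,\alpha}|\le C_M\mu_j^{-1}\eabs{\alpha}^{-M}$, and balancing this against the trivial $|u_{j,\alpha}|\le 1$ at the crossover $R\sim\mu_j^{-1/M}$ yields $\nm{H^N g_j}{L^2}\lesssim\mu_j^{-\epsilon}$ for every $\epsilon>0$. Your route is more elementary and self-contained (no interpolation, no $M^{1,1}$ trace-class estimate), at the cost of losing control of the constant $C_{N,\epsilon}$ in $N$. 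That is harmless for the Schwartz case, where each $N$ is treated separately, but it would not deliver the uniform $h^N(N!)^{2s}$ structure needed for the Pilipovi\'c analogue, which is exactly why the paper routes the unified argument through interpolation. Your Part~(2) is essentially the paper's direct estimation, with Sobolev embedding on the factors substituting for the Cauchy--Schwarz step the paper performs on $\nm{H_1^{N_1}H_2^{N_2}K}{L^2}$. One bookkeeping detail: with your choice, $f_{1,j}=\mu_j^{1/4}g_j$ lives on $\rr{d_2}$ and $f_{2,j}=\mu_j^{1/4}\overline{e_j}$ on $\rr{d_1}$, the reverse of the convention in \eqref{Eq:KernelExp2}; swap the labels.
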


\par

The corresponding characterizations of operators with Pilipovi{\'c} kernels are
given in the following theorem.

\par

\begin{thm}\label{Thm:KernPilSpaceChar}
Let $p\in [1,\infty ]$, $s>0$, $d=\min (d_1,d_2)$ and $T$ be a linear
and continuous operator from $\maclH _0(\rr {d_1})$ to
$\maclH _0'(\rr {d_2})$ with kernel $K\in C^\infty (\rr {d_2}\times \rr {d_1})$.
Then the following is true:
\begin{enumerate}
\item if $K\in \maclH _s (\rr {d_2}\times \rr {d_1})$ ($K\in \maclH _{0,s}
(\rr {d_2}\times \rr {d_1})$), then \eqref{Eq:KernelExp1}
holds for some sequences in \eqref{Eq:KernelExp2} such that
\begin{equation}\label{Eq:KernPilRoumSpaceChar}
\sup _{j\ge 1} \big (e^{r\cdot j^{\frac 1{2ds}}} \lambda _j \big ) <\infty
\quad \text{and}\quad
\sup \left ( \frac {e^{r\cdot j^{\frac 1{2ds}}}
\nm {H^N f_{k,j}}{L^p(\rr {d_k})}}{h^N(N!)^{2s}}\right )
<\infty ,
\end{equation}
for $k=1,2$ and some $h> 0$ and $r>0$ (every $h> 0$ and $r>0$),
where the latter supremum is taken over all $j\ge 0$ and $N\ge 0$;

\vrum

\item on the other hand, if instead $K\in C^\infty (\rr {d_2}\times \rr {d_1})$
and satisfies \eqref{Eq:KernelExp1} for some $\{ \lambda _j\}
_{j=1}^\infty \subseteq \mathbf R_+$, and
\eqref{Eq:KernPilRoumSpaceChar} holds for $k=1,2$ and some $r>0$
(every $r>0$),
then $K\in \maclH _s (\rr {d_2}\times \rr {d_1})$ ($K\in \maclH _{0,s}
(\rr {d_2}\times \rr {d_1})$).
\end{enumerate}
\end{thm}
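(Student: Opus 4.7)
The plan is to produce the expansion in (1) from the singular value decomposition of $T_K$ and to verify (2) by estimating $H^N K$ directly term by term; the converse is essentially routine, so the substance lies in (1).

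\emph{Forward direction.} Since $K\in \maclH_s(\rr{d_2+d_1})\subseteq L^2$, the operator $T_K\colon L^2(\rr{d_1})\to L^2(\rr{d_2})$ is Hilbert--Schmidt, hence compact. Its SVD
\[
T_K e_j=\sigma_j g_j,\qquad \{e_j\}\in\ON(L^2(\rr{d_1})),\quad \{g_j\}\in\ON(L^2(\rr{d_2})),
\]
produces the expansion $K(x,y)=\sum_j\sigma_j g_j(x)\overline{e_j(y)}$, and Theorem \ref{thmSchattGSkernels} provides $\sigma_j\lesssim e^{-cj^{1/(2ds)}}$ for some (every) $c>0$ in the Roumieu (Beurling) case. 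I would then put $f_{2,j}=\mu_j g_j$, $f_{1,j}=\mu_j\overline{e_j}$ and $\lambda_j=\sigma_j/\mu_j^2$: orthogonality of the $f_{k,j}$ is immediate from that of $\{g_j\},\{e_j\}$, so the remaining task is to choose the weights $\mu_j>0$ so that both estimates in \eqref{Eq:KernPilRoumSpaceChar} hold.

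\emph{The main obstacle} is a quantitative bound on $\|H^N g_j\|_{L^p}$ free of the destructive factor $\sigma_j^{-1}$ produced by the naive estimate $\|H^N g_j\|_{L^p}\lesssim \sigma_j^{-1}h^N(N!)^{2s}$ (applied from $g_j=\sigma_j^{-1}T_K e_j$ together with Theorem \ref{Thm:KernelsEasyDirection}). A direct check shows such a crude bound admits no compatible $\mu_j$. Instead I would expand $g_j=\sum_\alpha u_{\alpha,j}h_\alpha$ and split the series at $|\alpha|=L_j\asymp j^{1/d}$. The eigenvector identity $\sigma_j u_{\alpha,j}=\sum_\beta a_{\alpha,\beta}v_{\beta,j}$, combined with Cauchy--Schwarz and the Hermite-coefficient decay $|a_{\alpha,\beta}|\lesssim e^{-R(|\alpha|+|\beta|)^{1/(2s)}}$ of $K$, gives $|u_{\alpha,j}|\lesssim \sigma_j^{-1}e^{-R|\alpha|^{1/(2s)}/2}$ (with $R$ arbitrary in the Beurling case). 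The threshold $L_j\asymp j^{1/d}$ is tuned precisely so that the tail factor $\sigma_j^{-2}e^{-RL_j^{1/(2s)}/c'}$ is $O(1)$; the low-frequency piece contributes $\|H^N P_{L_j}g_j\|_{L^p}\lesssim(2L_j+d)^N\lesssim j^{N/d}$, and the standard estimate $X^{2N}e^{-RX^{1/(2s)}/2}\lesssim h_R^{2N}(N!)^{4s}$ (with $h_R\to 0$ as $R\to\infty$) controls the high-frequency tail, giving the composite bound
\[
\|H^N g_j\|_{L^p}\lesssim j^{N/d}+h_R^N(N!)^{2s},
\]
now free of $\sigma_j^{-1}$. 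The factor $j^{N/d}$ is converted into $h^N(N!)^{2s}\exp(\delta_h j^{1/(2ds)})$ ($\delta_h=C/h^{1/(2s)}$ shrinking as $h\to\infty$) via the elementary inequality $X^N\le h^N(N!)^{2s}\exp(C(X/h)^{1/(2s)})$.

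Taking $\mu_j=\sigma_j^{1/2-\epsilon}$ for a small $\epsilon>0$ gives $\lambda_j=\sigma_j^{2\epsilon}$ with the required decay, while $\mu_j\|H^N g_j\|_{L^p}$ is dominated by $\sigma_j^{1/2-\epsilon}(h^N(N!)^{2s}e^{\delta_h j^{1/(2ds)}}+h_R^N(N!)^{2s})$: the $\sigma_j^{1/2-\epsilon}$-factor absorbs the exponential growth in $j$ once $c$ is taken large enough (which is automatic for every $c>0$ in the Beurling case, and available for some $c>0$ after shrinking $r$ and enlarging $h$ in the Roumieu case). The same argument applied via $T_K^*T_K$ handles $e_j$, completing (1). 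For the converse (2), writing $H_{x,y}^N=(H_x+H_y)^N$ and expanding binomially yields
\[
H_{x,y}^N(f_{1,j}\otimes f_{2,j})=\sum_{k=0}^N\binom{N}{k}H^k f_{1,j}\otimes H^{N-k}f_{2,j};
\]
using the hypotheses on $\|H^k f_{i,j}\|_{L^p}$, the identity $\|f\otimes g\|_{L^p}=\|f\|_{L^p}\|g\|_{L^p}$, the inequality $(k!(N-k)!)^{2s}\le (N!)^{2s}$ and $\sum_k\binom{N}{k}=2^N$ one gets $\|H^N(f_{1,j}\otimes f_{2,j})\|_{L^p}\lesssim (2h)^N(N!)^{2s}e^{-2rj^{1/(2ds)}}$, and summing in $j$ with $\sum_j\lambda_j e^{-2rj^{1/(2ds)}}\lesssim\sum_j e^{-3rj^{1/(2ds)}}<\infty$ produces $\|H^N K\|_{L^p}\lesssim(2h)^N(N!)^{2s}$, placing $K$ in $\maclH_s$ (or in $\maclH_{0,s}$ by reversing quantifiers).
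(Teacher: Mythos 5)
Your overall scaffolding for part (1) matches the paper's: polar decomposition of $T_K$, the singular-value decay from Theorem \ref{thmSchattGSkernels}, and attaching a positive power of the singular value to each eigenvector. The converse (2) is essentially the paper's argument and is fine. The crucial step where you diverge --- bounding $\|H^N f_{k,j}\|$ --- is where a genuine gap appears. You estimate the Hermite coefficients of $g_j$ via $\sigma_j u_{\alpha,j}=\sum_\beta a_{\alpha,\beta}v_{\beta,j}$, producing $|u_{\alpha,j}|\lesssim\sigma_j^{-1}e^{-R|\alpha|^{1/(2s)}}$, and then claim that the threshold $L_j\asymp j^{1/d}$ makes the tail factor $\sigma_j^{-2}e^{-RL_j^{1/(2s)}/c'}$ bounded. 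This requires a \emph{lower} bound $\sigma_j\gtrsim e^{-c''j^{1/(2ds)}}$, but the theorem supplies only the \emph{upper} bound $\sigma_j\lesssim e^{-cj^{1/(2ds)}}$: the singular values could decay far faster (or vanish), so the truncation cannot be tuned as stated, and the resulting bound $\|H^N g_j\|_{L^p}\lesssim j^{N/d}+h_R^N(N!)^{2s}$ is not justified. The weighting $\mu_j=\sigma_j^{1/2-\epsilon}$ does not save it either, since the problematic term carries $\sigma_j^{-1-2\epsilon}$, still a negative power.

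The paper's proof sidesteps dividing by $\sigma_j$ entirely. It uses Proposition \ref{Prop:SelfAdj} (proved via Calderon--Lion interpolation on the strip) to see that $(T^*T)^{1/4}$ and $(TT^*)^{1/4}$ have kernels $K_1,K_2$ in the same Pilipovi\'c class, with $K_1=\sum_j\sqrt{\lambda_{0,j}}\,f_j\otimes\overline{f_j}$. Combined with Lemma \ref{Lemma:PilKernelChar1} and a trace-class/Hilbert--Schmidt estimate on $H_1^NH_2^NK_1$, this yields $\lambda_{0,j}^{1/4}\|H^Nf_j\|_{L^2}\lesssim h_0^N(N!)^{2s}$ directly --- the weight $\lambda_{0,j}^{1/4}$ is attached \emph{before} the Hermite estimate, so only non-negative powers of singular values ever appear, and no lower bound is needed. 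You should replace your eigenvector-coefficient computation with the fractional-power route (or otherwise find a way to estimate $\|H^N f_j\|$ that never inverts $\sigma_j$); one possible adaptive fix would be to take $L_j\asymp(\log\sigma_j^{-1})^{2s}$ so that the tail is controlled by construction and then check that $\mu_j^2(2L_j+d)^{2N}$ still obeys the desired bound uniformly in $\sigma_j$, but as written, with $L_j\asymp j^{1/d}$, the argument does not close.
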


\par

We need some preparations for the proof. First we observe
that $\maclH ^p_{[\vartheta ]}$ possess the expected interpolation
properties.

\par

\begin{lemma}\label{Lemma:Interpolation1}
Let $\theta \in [0,1]$, $\vartheta$, $\vartheta _1$ and $\vartheta _2$
be weights on $\nn d$, and let $p,p_1,p_2\in [1,\infty ]$ be such that
$$
\frac 1p=\frac {1-\theta}{p_1}+\frac \theta{p_2}
\quad \text{and}\quad
\vartheta = \vartheta _1^{1-\theta}\vartheta _2^\theta.
$$
Then
$$
(\maclH ^{p_1}_{[\vartheta _1]}(\rr d),\maclH ^{p_2}_{[\vartheta _2]}(\rr d))
_{[\theta ]} =\maclH ^p_{[\vartheta ]}(\rr d).
$$
\end{lemma}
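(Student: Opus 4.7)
The plan is to reduce the statement to the corresponding interpolation identity for weighted sequence spaces by means of the Hermite-coefficient map $T$ in \eqref{T1Map}. By Remark \ref{Rem:LinkEllHs}, the map $T$ is an isometric bijection between $\ell^p_{[\vartheta]}(\nn d)$ and $\maclH^p_{[\vartheta]}(\rr d)$ for every weight $\vartheta$ on $\nn d$ and every $p\in[1,\infty]$. Since complex interpolation is a functor on the category of Banach couples under isometric isomorphisms, applying $T$ componentwise to the couple $(\ell^{p_1}_{[\vartheta_1]}(\nn d),\ell^{p_2}_{[\vartheta_2]}(\nn d))$ yields
$$
\big ( \maclH ^{p_1}_{[\vartheta _1]}(\rr d), \maclH ^{p_2}_{[\vartheta _2]}(\rr d) \big ) _{[\theta ]}
\ \cong \
\big ( \ell ^{p_1}_{[\vartheta _1]}(\nn d), \ell ^{p_2}_{[\vartheta _2]}(\nn d) \big ) _{[\theta ]}
$$
isometrically. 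Hence the lemma is reduced to the identity
$$
\big ( \ell ^{p_1}_{[\vartheta _1]}(\nn d), \ell ^{p_2}_{[\vartheta _2]}(\nn d) \big ) _{[\theta ]}
= \ell ^p_{[\vartheta ]}(\nn d).
$$

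To establish this identity I would proceed in two steps. First, note that the multiplication operator $M_\vartheta \colon \{c_\alpha\}\mapsto \{c_\alpha \vartheta(\alpha)\}$ is an isometric isomorphism from $\ell^p_{[\vartheta]}(\nn d)$ onto $\ell^p(\nn d)$ for every weight $\vartheta$ and every $p$. However, the two weights $\vartheta_1$ and $\vartheta_2$ do not in general coincide, so the trivialization cannot be done simultaneously. The classical way to handle this is the Stein--Weiss interpolation theorem with change of measure/weight, which in the discrete setting asserts
$$
\big ( \ell ^{p_1}(w_1), \ell ^{p_2}(w_2) \big ) _{[\theta ]} = \ell ^p(w), \qquad w = w_1^{1-\theta}w_2^{\theta},\quad \tfrac 1p = \tfrac {1-\theta}{p_1}+\tfrac \theta {p_2},
$$
for arbitrary positive sequences $w_1,w_2$. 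One may also deduce it directly from the Calder\'on product description of complex interpolation of $\ell^p$-lattices. Applied with $w_j=\vartheta_j$ this gives exactly the required identity.

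The second step, which is essentially bookkeeping, is to verify that the weighted formulation is equivalent to the unweighted Stein--Weiss statement. For this, given $\{c_\alpha\}\in \ell^{p_j}_{[\vartheta_j]}$ and a holomorphic $\ell^{p_j}_{[\vartheta_j]}$-valued function $F$ on the strip realizing $\{c_\alpha\}$ as an element of the interpolation space, I would consider $G(\alpha,z)=F(\alpha,z)\vartheta_1(\alpha)^{1-z}\vartheta_2(\alpha)^z$ and check that the holomorphic, interpolation-admissible function $G$ realizes $\{c_\alpha \vartheta(\alpha)\}$ in $(\ell^{p_1},\ell^{p_2})_{[\theta]}=\ell^p$, with matching norms. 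Running the construction in the opposite direction gives the reverse inclusion.

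The only delicate point I anticipate is the case $p_1=p_2=\infty$ (so $p=\infty$), where $[\ell^\infty,\ell^\infty]_{[\theta]}$ in the strict complex interpolation sense is known to be the closure of finite sequences rather than all of $\ell^\infty$. Under the present hypotheses $p,p_1,p_2\in[1,\infty]$, the identity still holds as a Banach space identity, but if one wants the equality to be true isometrically in the case where one of the endpoints is $\ell^\infty$, one typically invokes the Calder\'on upper/lower interpolation method. I would therefore phrase the identification via the Calder\'on product $\ell^{p_1}_{[\vartheta_1]}(\nn d)^{1-\theta}\cdot\ell^{p_2}_{[\vartheta_2]}(\nn d)^{\theta}=\ell^p_{[\vartheta]}(\nn d)$, which is valid in the full range $[1,\infty]$ and coincides with $[\cdot,\cdot]_{[\theta]}$ in all cases needed in this paper.
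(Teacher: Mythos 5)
Your proposal is correct and follows the same route as the paper: the paper's proof consists precisely of transporting the problem via the isometric bijection $T$ of Remark \ref{Rem:LinkEllHs} to the weighted sequence-space identity $(\ell^{p_1}_{[\vartheta_1]},\ell^{p_2}_{[\vartheta_2]})_{[\theta]}=\ell^p_{[\vartheta]}$, which it cites as known. Your additional remarks on how to establish the sequence-space identity (Stein--Weiss / Calder\'on products) and the potential endpoint subtlety at $p_1=p_2=\infty$ go beyond what the paper records but do not change the argument; in the paper's actual application (Proposition \ref{Prop:SelfAdj}) only $p=2$ is used, so the endpoint issue is immaterial there.
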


\par

\begin{proof}
The result follows from the fact that the map
$$
\{ c_\alpha \} _{\alpha \in \nn d} \mapsto \sum _{\alpha \in \nn d}
c_\alpha h_\alpha
$$
is bijective and isometric from $\ell ^p_{[\vartheta ]}(\nn d)$ to
$\maclH ^p_{[\vartheta ]}(\rr d)$, and that
$$
(\ell ^{p_1}_{[\vartheta _1]}(\nn d),\ell ^{p_2}_{[\vartheta _2]}(\nn d))
_{[\theta ]} =\ell ^p_{[\vartheta ]}(\nn d).
$$
\end{proof}

\par

 We also need the following result on powers of
non-negative self-adjoint operators on $L^2(\rr d)$.

\par

\begin{prop}\label{Prop:SelfAdj}
Let $s\ge 0$, $r>0$ and let $T$ be a self-adjoint and non-negative operator
on $L^2(\rr d)$ with kernel $K$ in $\maclH _s(\rr d\times \rr d)$. Then
the following is true:
\begin{enumerate}
\item the kernel of $T^r$ belongs to $\maclH _s(\rr d\times \rr d)$;

\vrum

\item $T^r$ is continuous from $\maclH _s'(\rr d)$ to $\maclH _s(\rr d)$.
\end{enumerate}

\par

The same holds true if the $\maclH _s$ and $\maclH _s'$ spaces
are replaced by $\maclH _{0,s}$ and $\maclH _{0,s}'$ spaces, respectively,
or by $\mascS$ and $\mascS '$ spaces, respectively.
\end{prop}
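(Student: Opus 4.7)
The plan is to diagonalise $T$ via the spectral theorem and then estimate the Hermite matrix of $T^r$ entry by entry. Since $K\in\maclH_s(\rr d\times\rr d)\subseteq L^2$, the operator $T$ is Hilbert--Schmidt on $L^2(\rr d)$; being non-negative and self-adjoint it admits an orthonormal eigensystem $\{f_j\}$ with eigenvalues $\lambda_j\ge 0$ tending to zero. Defining $T^r$ by the functional calculus,
\begin{equation*}
T=\sum_j\lambda_j\langle\cdot,f_j\rangle f_j,\qquad T^r=\sum_j\lambda_j^r\langle\cdot,f_j\rangle f_j,
\end{equation*}
so the kernel of $T^r$ is $K_r(x,y)=\sum_j\lambda_j^rf_j(x)\overline{f_j(y)}$. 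Once $K_r\in\maclH_s(\rr d\times\rr d)$ is established, assertion (2) follows immediately from Theorem~\ref{Thm:KernelsEasyDirection}. The case $s=0$ is trivial because $T$ is then of finite rank.

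The task thus reduces to bounding the Hermite coefficients $b^{(r)}_{\alpha,\beta}=\sum_j\lambda_j^rc_{j,\alpha}\overline{c_{j,\beta}}$ of $K_r$, where $f_j=\sum_\alpha c_{j,\alpha}h_\alpha$. Writing $K=\sum_{\alpha,\beta}a_{\alpha,\beta}h_\alpha\otimes h_\beta$, the hypothesis $K\in\maclH_s$ produces $r_0>0$ with $|a_{\alpha,\beta}|\lesssim e^{-r_0(|\alpha|^{1/(2s)}+|\beta|^{1/(2s)})}$, while Theorem~\ref{thmSchattGSkernels} yields the singular-value bound $\lambda_j\lesssim e^{-cj^{1/(2ds)}}$. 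The eigenvalue equation $\lambda_jc_{j,\alpha}=\sum_\beta a_{\alpha,\beta}c_{j,\beta}$ together with Cauchy--Schwarz and $\|f_j\|_{L^2}=1$ gives the smoothing bound $|c_{j,\alpha}|\lesssim\lambda_j^{-1}e^{-r_0|\alpha|^{1/(2s)}}$, which interpolated with the trivial bound $|c_{j,\alpha}|\le 1$ becomes
\begin{equation*}
|c_{j,\alpha}|\lesssim\lambda_j^{-\theta}e^{-r_0\theta|\alpha|^{1/(2s)}},\qquad \theta\in[0,1].
\end{equation*}

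Choosing $\theta=\min(1,r/4)\in(0,1]$ ensures $r-2\theta>0$, so that
\begin{equation*}
|b^{(r)}_{\alpha,\beta}|\le\sum_j\lambda_j^r|c_{j,\alpha}||c_{j,\beta}|\lesssim e^{-r_0\theta(|\alpha|^{1/(2s)}+|\beta|^{1/(2s)})}\sum_j\lambda_j^{r-2\theta},
\end{equation*}
and the series in $j$ converges by the singular-value estimate. This shows $K_r\in\maclH_s(\rr d\times\rr d)$, proving (1). The Beurling $\maclH_{0,s}$ and Schwartz cases go through by the identical computation: in each setting the hypotheses on $a_{\alpha,\beta}$ and on $\lambda_j$ hold for every $r_0>0$ (respectively with arbitrary polynomial rate), so the output bound on $b^{(r)}_{\alpha,\beta}$ inherits the corresponding uniformity.

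The main obstacle is the trade-off inherent in the interpolation: the factor $\lambda_j^{-\theta}$ is what pulls exponential decay in $\alpha$ out of the smoothing bound, but it must not destroy summability of the $\lambda_j^r$-weighted series over $j$. It is precisely the rapid singular-value decay supplied by Theorem~\ref{thmSchattGSkernels} that leaves room to pick a strictly positive $\theta$ with $r-2\theta>0$ while still retaining a positive exponential rate $r_0\theta$ in $(\alpha,\beta)$, and checking this balance is the technical heart of the argument.
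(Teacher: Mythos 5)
Your proof is correct, and it takes a genuinely different route from the paper. The paper proves assertion (2) first by complex interpolation: it views $z\mapsto T^z$ as an analytic family of operators on the strip $0\le\operatorname{Re}(z)\le 1$, observes that $T^{iy}$ is bounded on $L^2$ while $T^{1+iy}=T\circ T^{iy}$ maps $L^2$ into $\maclH^2_{[\vartheta_c]}$ for some $c>0$, and then applies the Calderon--Lions theorem together with Lemma~\ref{Lemma:Interpolation1} to conclude $T^r\colon L^2\to\maclH^2_{[\vartheta_{rc}]}$; dualizing and interpolating once more yields $T^r\colon\maclH^2_{[1/\vartheta_{rc/2}]}\to\maclH^2_{[\vartheta_{rc/2}]}$, whence (2) follows by taking unions over $r>0$, and (1) follows a posteriori from the kernel theorem. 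You instead diagonalize $T$ via the spectral theorem and bound the Hermite matrix of the kernel of $T^r$ entrywise, deriving the smoothing bound $|c_{j,\alpha}|\lesssim\lambda_j^{-1}e^{-r_0|\alpha|^{1/(2s)}}$ from Cauchy--Schwarz on the eigenvalue equation, interpolating (log-convexity) against the trivial $\ell^2$ bound, and controlling the $j$-sum with the singular-value estimate of Theorem~\ref{thmSchattGSkernels}; this gives (1) first, and then (2) follows from Theorem~\ref{Thm:KernelsEasyDirection}. Both proofs are valid and non-circular (Theorem~\ref{thmSchattGSkernels} depends only on Section~3, not on this proposition). What each buys: the paper's interpolation argument needs no a priori singular-value decay and extends cleanly to the whole scale $\maclH^p_{[\vartheta]}$, while your spectral argument is more elementary and gives explicit coefficient bounds for the kernel of $T^r$, at the price of invoking Theorem~\ref{thmSchattGSkernels} and carrying out a small trade-off between the exponent $\theta$ used in the interpolation and the summability of $\sum_j\lambda_j^{r-2\theta}$, which you handled correctly with $\theta=\min(1,r/4)$.
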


\par

\begin{proof}
We only prove the result when $K\in \maclH _s(\rr d\times \rr d)$. The other
cases follows by similar arguments and is left for the reader.

\par

Let
$$
\Omega = \sets {z\in \mathbf C}{0<\operatorname{Re}(z)<1}
$$
and $T_0(z)=T^z$ when $z\in \overline \Omega$. Then the map
$z\mapsto T(z)$ with values in $\mathscr L(L^2(\rr d))$
is continuous on $\overline \Omega$ and analytic on $\Omega$.

\par

Furthermore, by writing $T^z=T^x\circ T^{iy}$ when $z=x+iy$, and
using that $T^{iy}$ is bounded on $L^2(\rr d)$ when $y\in \mathbf R$,
it follows from the assumptions that
\begin{align*}
\sup _{y\in \mathbf R}\nm {T_0(iy)}{L^2(\rr d)\to L^2(\rr d)} &<\infty ,
\\[1ex]
\sup _{y\in \mathbf R}\nm {T_0(1+iy)}
{L^2(\rr d)\to \maclH ^2_{[\vartheta _c]}(\rr d)} &<\infty 
\intertext{and}
\sup _{z\in \overline \Omega}\nm {T_0(z)}{L^2(\rr d)\to L^2(\rr d)}
&\le \sup _{0\le x\le 1} \nm {T}{L^2(\rr d)\to L^2(\rr d)}^x
\end{align*}
for some $c>0$, where $\vartheta _c(\alpha )=e^{c|\alpha |^{\frac 1{2s}}}$.

\par

It now follows from Lemma \ref{Lemma:Interpolation1} and Calderon-Lion's
interpolation theorem
(cf. Theorem IX.20 in \cite{ReSi}) that $T^r$ is continuous from $L^2(\rr d)$ to
$\maclH ^2_{[\vartheta _{rc}]}(\rr d)$. Hence, by duality it follows that
\begin{alignat*}{2}
T^r\, &:\, & L^2(\rr d) & \to \maclH ^2_{[\vartheta _{rc}]}(\rr d)
\intertext{and}
T^r\, &:\, & \maclH ^2_{[1/\vartheta _{rc}]}(\rr d) & \to L^2(\rr d)
\intertext{are continuous. Hence, by interpolation we obtain that}
T^r\, &:\, & \maclH ^2_{[1/\vartheta _{rc/2}]}(\rr d)
& \to \maclH ^2_{[\vartheta _{rc/2}]}(\rr d)
\end{alignat*}
is continuous (cf. Remark \ref{Rem:GelfandTripp}). The result now follows from
$$
\maclH _s(\rr d) = \bigcup _{r>0} \maclH ^2_{[\vartheta _{r}]}(\rr d)
\quad \text{and}\quad
\maclH _s'(\rr d) = \bigcap _{r>0} \maclH ^2_{[1/\vartheta _{r}]}(\rr d).
\qedhere
$$
\end{proof}

\par

We also need the following characterization of Pilipovi{\'c} kernels.

\par

\begin{lemma}\label{Lemma:PilKernelChar1}
Let $p, q \in (0, \infty]$, $\omega \in \mascP (\rr { 2d _2} \times \rr {2d_1})$,
and let $s>0$, $K\in \maclH '_0 (\rr {d _2} \times \rr {d_1})$,
$$
H_1=|x_1|^2-\Delta _{x_1},
\quad 
H_2=|x_2|^2-\Delta _{x_2},\qquad x = (x_2,x_1)\in \rr {d_2}\times
\rr {d_1}.
$$
Also let $H=H_2+H_1$ be the Harmonic oscillator on $\rr {d_2}\times \rr {d_1}$.
Then the following conditions are equivalent:
\begin{enumerate}
\item  $K\in \maclH _s(\rr {d_2}\times \rr {d_1})$ \quad
($K\in \maclH _{0,s}(\rr {d_2}\times \rr {d_1})$);

\vrum

\item for some $h>0$ \quad (for every $h>0$) it holds
\begin{equation}\label{Eq:PilSpaceHarmOscEst}
\| H ^N K\| _{L ^2} \lesssim h ^N (N !) ^{2s}, \qquad N \geq 0,
\end{equation}

\vrum 

\item for some $h>0$ \quad (for every $h>0$) it holds
\begin{equation}\label{Eq:PilSpaceHarmOscSplitEst}
\| H_1^{N_1}H_2^{N_2}K \| _{L ^2}\lesssim
h^{(N_1+N_2)} (N_1!N_2!)^{2s},\qquad N _1, N _2\geq 0.
\end{equation}

\vrum

\item for some $h>0$ \quad (for every $h>0$) it holds
\begin{equation}\label{Eq:PilSpaceHarmOscSplitEstM}
\| H_1^{N_1}H_2^{N_2}K \| _{M ^{p, q} _{(\omega)}}\lesssim
h^{(N_1+N_2)} (N_1!N_2!)^{2s},\qquad N _1 \geq N _{0, 1}, N _2 \geq N _{0,2}.
\end{equation}
\end{enumerate}
\end{lemma}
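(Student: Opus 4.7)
The strategy is to pass everything through the Hermite coefficients of $K$, since all four conditions can be translated into growth/decay statements about these coefficients and then compared directly.

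\textbf{Step 1 (common reduction).} Expand
$$
K = \sum _{\beta \in \nn {d_2}}\sum _{\gamma \in \nn {d_1}} c_{\beta ,\gamma}
h_{d_2,\beta}\otimes h_{d_1,\gamma}.
$$
Since the Hermite functions diagonalise the harmonic oscillator, $H_j(h_{d_2,\beta}\otimes h_{d_1,\gamma})$ multiplies the coefficient by $(2|\gamma |+d_1)$ or $(2|\beta |+d_2)$, and $H$ multiplies it by $2|(\beta ,\gamma)|+d_1+d_2$. Thus by Parseval
$$
\nm {H^N K}{L^2}^2 = \sum _{\beta ,\gamma}(2|(\beta ,\gamma)|+d)^{2N}|c_{\beta ,\gamma}|^2,
\qquad
\nm {H_1^{N_1}H_2^{N_2}K}{L^2}^2 = \sum _{\beta ,\gamma}(2|\gamma|+d_1)^{2N_1}(2|\beta|+d_2)^{2N_2}|c_{\beta ,\gamma}|^2.
$$
By the definition of $\maclH _s$ and $\maclH _{0,s}$, condition (1) is equivalent to $|c_{\beta ,\gamma}|\lesssim e^{-r(|\beta|+|\gamma|)^{1/(2s)}}$ for some (every) $r>0$.

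\textbf{Step 2 ((1)$\Leftrightarrow$(3)).} For $(1)\Rightarrow (3)$, use the elementary inequality $(a+b)^{1/(2s)}\ge \tfrac 12 (a^{1/(2s)}+b^{1/(2s)})$ (which holds for all $s>0$ since $(a+b)^p\ge \max (a^p,b^p)$) to split the decay factor, and then estimate each univariate sum through the Laplace-type bound
$$
\sup _{t\ge 0} t^{2N} e^{-r t^{1/(2s)}} \lesssim C^N (N!)^{4s},
$$
obtained by optimising in $t$ and invoking Stirling. For $(3)\Rightarrow (1)$, set $(N_1,N_2)=(N,0)$ to get $|c_{\beta ,\gamma}|\lesssim \inf _N h^N(N!)^{2s}|\gamma |^{-N}$, whose minimisation yields $|c_{\beta ,\gamma}|\lesssim e^{-r|\gamma|^{1/(2s)}}$; symmetrically with $(0,N)$; then take the geometric mean, and revert the subadditivity inequality case by case ($1/(2s)\le 1$ and $1/(2s)>1$) to recover decay in $(|\beta |+|\gamma|)^{1/(2s)}$.

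\textbf{Step 3 ((2)$\Leftrightarrow$(3)).} Since $H_1$ and $H_2$ commute, the binomial formula gives
$$
H^N = \sum _{k=0}^N \binom Nk H_1^k H_2^{N-k},
$$
so by the triangle inequality applied to (3), and the crude bound $k!(N-k)!\le N!$,
$$
\nm {H^NK}{L^2}\lesssim \sum _{k=0}^N \binom Nk h^N(k!(N-k)!)^{2s} \lesssim (2h)^N(N!)^{2s},
$$
which yields (2). The converse $(2)\Rightarrow (3)$ is routed through (1): from (2) one extracts $|c_\alpha |\lesssim e^{-r|\alpha|^{1/(2s)}}$ by the same optimisation as in Step 2, and then Step 2 upgrades (1) to (3).

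\textbf{Step 4 ((3)$\Leftrightarrow$(4)).} This is where the work is. The inclusion $\maclH _s\subseteq M^{p,q}_{(\omega )}$ (continuously) together with the Hermite-coefficient characterisation of modulation spaces gives that for any admissible $p,q$ and $\omega \in \mascP$ the quasi-norm $\nm {\cdot}{M^{p,q}_{(\omega )}}$ on the closed subspace of functions with $\maclH _s$-type Hermite decay is controlled by, and controls, the $L^2$ norm up to a factor polynomial in $(N_1,N_2)$; such polynomial factors are absorbed into $h^{N_1+N_2}(N_1!N_2!)^{2s}$ (by readjusting $h$). Concretely, one realises that $H_1^{N_1}H_2^{N_2}K$ has Hermite coefficients $(2|\gamma|+d_1)^{N_1}(2|\beta|+d_2)^{N_2}c_{\beta ,\gamma}$, compares this to $\nm {\cdot}{\ell ^{p,q}_{[\omega ]}}$ after transferring $\omega$ to the Hermite side via Remark \ref{Rem:GelfandTripp}, and uses that exponential decay dominates any polynomial weight. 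The threshold $N_j\ge N_{0,j}$ simply reflects the number of oscillator powers needed for the weighted modulation norm to embed into $L^2$ and vice versa.

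\textbf{Main obstacle.} Steps 1--3 are self-contained and reduce to Hermite-side estimates. The delicate point is Step 4: the passage from $L^2$ to an arbitrary $M^{p,q}_{(\omega )}$ norm requires a uniform bound in $(N_1,N_2)$ of the embedding constants. This is where the exponential decay encoded by $\maclH _s$ (or $\maclH _{0,s}$) is essential; without it the polynomial loss from $\omega$ and from changing $(p,q)$ could not be reabsorbed into $h^{N_1+N_2}(N_1!N_2!)^{2s}$ with the same form.
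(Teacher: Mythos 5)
Steps 1--3 are essentially correct and match the paper's strategy in substance, though you reorganize the equivalence graph: the paper cites \cite[Proposition 4.1]{To13} for $(1)\Leftrightarrow(2)$ and proves $(2)\Leftrightarrow(3)$ directly (using Parseval and \cite[Lemma 4.7]{To13} in one direction, the binomial expansion of $(H_1+H_2)^N$ in the other), whereas you prove $(1)\Leftrightarrow(3)$ directly on the Hermite coefficients and route $(2)\Rightarrow(3)$ through $(1)$. That difference is harmless; the underlying Laplace/Stirling-type estimate and the binomial trick are the same.

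The gap is in Step 4, and it is a genuine one. You invoke a ``Hermite-coefficient characterisation of modulation spaces'' to reduce $\|H_1^{N_1}H_2^{N_2}K\|_{M^{p,q}_{(\omega)}}$ to an $\ell^{p,q}$-type norm on the sequence $(2|\gamma|+d_1)^{N_1}(2|\beta|+d_2)^{N_2}c_{\beta,\gamma}$, and then claim the discrepancy between $M^{p,q}_{(\omega)}$ and $L^2$ is a polynomial factor in $(N_1,N_2)$ that can be absorbed. Neither half is available: general modulation spaces $M^{p,q}_{(\omega)}$ are defined through the short-time Fourier transform and do \emph{not} admit a Hermite-coefficient characterisation (that characterisation is what defines $\maclH^p_{[\vartheta]}$, a different scale), and the discrepancy between $\|\cdot\|_{M^{p,q}_{(\omega)}}$ and $\|\cdot\|_{L^2}$ for a polynomially growing $\omega$ is not a bounded factor in $N_1,N_2$ but a genuinely different norm. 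What actually makes the parameter-independence work in the paper is the homeomorphism
$$
H_1^N\,:\, M^{p,q}_{(v_N\omega)}(\rr{d_2}\times\rr{d_1})\to M^{p,q}_{(\omega)}(\rr{d_2}\times\rr{d_1}),
\qquad v_N(x_1,\xi_1,x_2,\xi_2)=C(1+|x_1|^2+|\xi_1|^2)^N,
$$
from \cite[Theorem 3.10]{SiTo}. This lets you \emph{trade} powers of $H_1$ (and similarly $H_2$) for polynomial weight factors, sandwich an arbitrary $\omega\in\mascP$ between $1/v_{N_0}$ and $v_{N_0}$, and shift $N_1\mapsto N_1\pm 2N_0$ while paying only a $\binom{N+2N_0}{2N_0}^{2s}$ factor absorbable into $h^{N_1+N_2}(N_1!N_2!)^{2s}$. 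Combined with the embeddings $M^\infty_{(v_N\omega)}\subseteq M^{p,q}_{(\omega)}\subseteq M^\infty_{(\omega)}$ for $N$ large enough relative to $\min(p,q)$, this shows condition (4) is independent of $N_{0,1}, N_{0,2}, \omega, p, q$, after which $L^2=M^{2,2}$ closes the loop. Without invoking this homeomorphism (or an equivalent ingredient), Step 4 as written does not go through.
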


\par

\begin{proof}
The assertion (1) and (2) are equivalent in view of \cite[Proposition 4.1]{To13}.
We prove (2) and (3) are equivalent.
Assume that \eqref{Eq:PilSpaceHarmOscEst} holds.
Since $K \in \maclH '_0 (\rr {d _2} \times \rr {d_1})$,
it follws that 
$$
K = \sum _{\alpha _1, \alpha _2} c_\alpha (K) h _{\alpha _1}
\otimes h _{\alpha _2}, \quad  h _{\alpha _j},\ j=1, 2 
$$
and that the Hermite coefficients of $K$ satisfies
$$
|c_\alpha (K)| \lesssim e^{-\frac 1h|\alpha |^{\frac 1{2s}}},
\qquad \alpha =(\alpha _2,\alpha _1)\in \nn {d_2}\times \nn {d_1}.
$$
By parseval's inequality we obtain
\begin{multline*}
\nm {H_1^{N_1}H_2^{N_2}K}{L^2 }
\\[1ex]
\leq 
\left (
\sum _{\alpha _1\in \nn {d_1}}\sum _{\alpha _2\in \nn {d_2}}
(2|\alpha _1|+d_1)^{N_1}(2|\alpha _2|+d_2)^{N_2}
e^{-\frac 1{h}|\alpha |^{\frac 1{2s}}}
\right ) ^{1/2}
\\[1ex]
\le I_1\cdot I_2,
\end{multline*}
where
$$
I_k =  \sum _{\alpha _j \in \nn {d_k}}
(|\alpha _j |+d_k)^{N_k}
e^{-\frac 1{h_0}|\alpha _j |^{\frac 1{2s}}}
$$
with $h_0=ch$, for some constant $c>0$ which only depends on $s$.

By Lemma 4.7 in \cite{To13} and its proof we get
$$
I_k \lesssim (3(4sh_0)^{2s})^{N_k}(N_k!)^{2s}
= (3(4sch)^{2s})^{N_k}(N_k!)^{2s},
$$
and a combination of these estimates shows that (2)
implies (3).

\par

Assume that \eqref{Eq:PilSpaceHarmOscSplitEst} holds instead. Then
\begin{multline*}
\nm {H^N K}{L^ 2} = \nm {(H_1+H_2)^N K}{L^ 2}
\le
\sum _{k=0}^N{N \choose k}\nm {H_1^{N-k}H_2^k K}{L^ 2}
\\[1ex]
\lesssim h^N\sum _{k=0}^N{N \choose k} ((N-k)!k!)^{2s}
\le 
h^N(N!)^{2s}\sum _{k=0}^N{N \choose k} = (2h)^N(N!)^{2s},
\end{multline*}
and it follows that (3) implies (2).

\par

Now we prove the equvialence between (3) and (4).
First we show that 
\begin{equation} \label{Eq:PilSpaceHarmOscSplitEstMx1}
\|H ^N _1 K \| _{M ^{p, q} _{(\omega)}}\lesssim
h ^N (N !) ^{2s}, 
\\
\qquad N \geq N _0
\end{equation}
is independent on $N _0$ and $\omega$ when $p, q \geq 1$.
If \eqref{Eq:PilSpaceHarmOscSplitEstMx1} is true for $N_0=0$,
then it is also true for $N_0>0$.
If $0\le N\le N_0$, $N_1=N_0-N\ge 0$ and 
\eqref{Eq:PilSpaceHarmOscSplitEstMx1} holds for some $N _0 \geq 0$,
then by the fact that 
\begin{gather}
H^N _1 \, :\, M^{p,q}_{(v_N\omega )}(\rr {d _2} \times \rr {d _1})\to
M^{p,q}_{(\omega )}(\rr {d _2} \times \rr {d _1}),\label{HarmonicOscModMap}
\intertext{with}
 v_N(x _1,\xi _1, x _2,\xi _2 ) = C(1+|x _1|^2+|\xi _1 |^2)^N,\notag
\end{gather}
is a homeomorphism
$H ^N _1$ 
(cf. e.{\,}g. \cite[Theorem 3.10]{SiTo}), it follows that  
$$
\nm {H^N _1 K }{M^{p,q}_{(\omega )}} \lesssim
\nm {H^{N_0} _1 K }{M^{p,q}_{(\omega /v_{N_1})}}
\lesssim \nm {H^{N_0} _1 K}{M^{p,q}_{(\omega )}}<\infty ,
$$
and \eqref{Eq:PilSpaceHarmOscSplitEstMx1} holds for $N _0 = 0$.
This shows that \eqref{Eq:PilSpaceHarmOscSplitEstMx1} is 
independent of $N _0 \geq 0$ when $p, q \geq 1$.

\par

Let $\omega \in \mascP (\rr {2 d _2} \times \rr {2d _1} )$.
Then there exists an integer $N _0 \geq 0$ such that 
$$
(v_{N_0}) ^{-1} 
\lesssim \omega  
\lesssim v_{N_0}  ,
$$
and then
\begin{equation}\label{NormArrayCond}
\nm K {M^{p,q}_{( 1 /v_{N_0})}}\lesssim \nm K {M^{p,q}_{(\omega )}}
\lesssim \nm K {M^{p,q}_{(v_{N_0})}}.
\end{equation}
Hence the stated invariance follows if we prove that \eqref{Eq:PilSpaceHarmOscSplitEstMx1}
holds for $\omega =v_{N_0}$, if it is true for $\omega =1/v_{N_0}$.

\par

Therefore, assume that \eqref{Eq:PilSpaceHarmOscSplitEstMx1} holds for $\omega  =1/v_{N_0}$.
If $N\ge 2N_0$, then the bijectivity of \eqref{HarmonicOscModMap} gives
\begin{multline*}
\frac {\nm {H_1^N K}{M^{p,q}_{(v_{N_0})}}}{h^N(N!)^{2s}}
\lesssim
\frac {\nm {H_1^{N+2N_0}K}{M^{p,q}_{(1/v_{N_0})}}}{h^N(N!)^{2s}}
\\[1ex]
=
h^{2N_0}{{N+2N_0}\choose {2N_0}}^{2s}((2N_0)!)^{2s}
\frac {\nm {H_1^{N+2N_0}K}{M^{p,q}_{(1/v_{N_0})}}}{h^{N+2N_0}((N+2N_0)!)^{2s}}
\\[1ex]
\asymp
{{N+2N_0}\choose {2N_0}}^{2s}
\frac {\nm {H_1^{N+2N_0}K}{M^{p,q}_{(1/v_{N_0})}}}{h^{N+2N_0}((N+2N_0)!)^{2s}}
\lesssim
\frac {\nm {H_1^{N+2N_0}K}{M^{p,q}_{(1/v_{N_0})}}}{h_1^{N+2N_0}((N+2N_0)!)^{2s}},
\end{multline*}
where $h_1=\frac h{4^s}$.
This shows that \eqref{Eq:PilSpaceHarmOscSplitEstMx1} is independent of 
$\omega$ in the case $p, q \geq 1$.

\par

By repeating these arguments, it follows that \eqref{Eq:PilSpaceHarmOscSplitEstM} is independent
of $N _{0,1}$, $N _{0, 2}$, $\omega$ and $p,q\in [1,\infty ]$.
For general
$p,q\in (0,\infty ]$, the invariance of \eqref{Eq:PilSpaceHarmOscSplitEstM}
with respect to $N _{0,1}$, $N _{0, 2}$, $\omega$, $p$ and $q$, is now a
consequence of the embeddings
$$
M^\infty _{(v_N\omega )}(\rr {d _2} \times \rr {d _1})
\subseteq M^{p,q} _{(\omega )}(\rr {d _2} \times \rr {d _1})
\subseteq M^\infty _{(\omega )}(\rr {d _2} \times \rr {d _1})
$$
when
$$
\qquad N >\frac d{\min (p,q)}
$$
(see e.{\,}g. \cite{GaSa}).

\par

The equivalence bewteen (3) and (4) now follows
from these invariance properties and the fact that
$$
L^{2} = M^{2,2}.
$$
The proof is complete.
\end{proof}

\par

\begin{proof}[Proofs of Theorems \ref{Thm:KernSchwSpaceChar} and
\ref{Thm:KernPilSpaceChar}]
We only prove Theorem \ref{Thm:KernPilSpaceChar} and then in the Roumieu
case. The other cases follow by similar arguments and are left for the reader.

\par

(1) Assume that $K\in \maclH _s (\rr {d_2}\times \rr {d_1})$. By polar decomposition
we have
\begin{alignat*}{2}
K(x_2,x_1) &= \sum _{j=1}^\infty \lambda _{0,j}g_j(x_2)
\overline{f_j(x_1)},&
\quad x_1 &\in \rr {d_1},\ x_2\in \rr {d_2},
\intertext{where $\lambda _{0,j}\ge 0$ are the singular values of $T$,
$\{ f_j\} _{j=1}^\infty \in \ON (L^2(\rr {d_1}))$ and
$\{ g_j\} _{j=1}^\infty \in \ON (L^2(\rr {d_2}))$. Now let $K_1$ and $K_2$
be the kernels of $T_1\equiv (T^*\circ T)^{\frac 14}$ and $T_2\equiv
(T\circ T^*)^{\frac 14}$, respectively. Then}
K_1(x_2,x_1) &= \sum _{j=1}^\infty \sqrt {\lambda _{0,j}}\,  f_j(x_2)
\overline{f_j(x_1)},& \quad
x_1,x_2 &\in \rr {d_1}
\intertext{and}
K_2(x_2,x_1) &= \sum _{j=1}^\infty \sqrt {\lambda _{0,j}}\, g_j(x_2)
\overline{g_j(x_1)},& \quad
x_1,x_2 &\in \rr {d_2}.
\end{alignat*}

\par

By Theorem \ref{thmSchattGSkernels} we get
%
\begin{equation}\label{Eq:ImprSingValEst}
\lambda _{0,j} \lesssim e^{-rj^{\frac 1{2d}}}
\end{equation}
for some constant $r>0$.

\par

Since $K_1\in \maclH _s (\rr {d_1}\times \rr {d_1})$, Lemma \ref{Lemma:PilKernelChar1}
gives
$$
\left (\sum _{j=1}^\infty \lambda _{0,j} \nm {H^Nf_j}{L^2}^4\right )^{\frac 12}
= \nm {H_1^NH_2^NK_1}{\operatorname{Tr}} \le \nm {H_1^NH_2^NK_1}{M^{1,1}}
\le
h^N(N!)^{4s},
$$
where $\nm \cdo{\operatorname{Tr}}$ is the trace-class norm. Here we have
identified operators with their kernels, and used the fact that operators with kernels
in $M^{1,1}(\rr {2d})$ are of trace-class (cf. \cite{GH1,To13}).
Hence,
$$
\lambda _j^{\frac 14}\nm {H^Nf_j}{L^2}\lesssim h_0^N(N!)^{2s},
$$
where $h_0=\sqrt h$. Hence, if $f_{1,j} = \lambda _{0,j}^{\frac 13}f_j$
we obtain
$$
\nm {H^Nf_{1,j}}{L^2}\lesssim \lambda _{0,j}^{\frac 1{12}}h_0^N(N!)^{2s}
\lesssim
e^{-rj^{\frac 1{2ds}}}h_0^N(N!)^{2s},
$$
for some $r>0$. By considering $K_2$ instead of $K_1$ and letting
$f_{2,j}= \lambda _{0,j}^{\frac 13}g_j$, the same computations
give
$$
\nm {H^Nf_{2,j}}{L^2}\lesssim e^{-rj^{\frac 1{2ds}}}h_0^N(N!)^{2s}
$$
as well.

\par

The assertion now follows if we let $\lambda _j = \lambda _{0,j}^{\frac 13}$.

(2) By the assumptions and Cauchy-Schwartz inequality, we obtain
\begin{multline*}
\|H ^{N _1} _1 H ^{N _2} _2 K \| _{L ^2} 
= \|H ^{N _1} _1 H ^{N _2} _2 (\sum \lambda _j f _{1,j} \otimes f _{2, j}) \| _{L ^2}
\\
= \left( \iint _{\rr {d _2} \times \rr {d _1}}  
\Big| \sum \lambda _j H ^{N _1} _1f _{1,j} \otimes H ^{N _2} _2 f _{2, j}\Big| ^2dx_1 \ dx _2 \right) ^{1/2}
\\
\leq  \left( \iint _{\rr {d _2} \times \rr {d _1}}  
\sum \Big| \lambda _j \Big| ^2 
\sum \Big |H ^{N _1} _1f _{1,j} \otimes H ^{N _2} _2 f _{2, j}\Big| ^2dx_1 \ dx _2 \right) ^{1/2}
\\
\leq  \sum \lambda _j  \left( \sum \|H ^{N _1} _1 f_{1, j} \| ^2 _{L ^2}
\|H ^{N _2} _2 f_{2, j} \| ^2 _{L ^2}\right) ^{1/2}
\\
\lesssim h ^{N _1 + N _2}(N _1! N _2 !) ^{2s} \sum e ^{- r j^{1/{2ds}}}
\\
\lesssim h ^{N _1 + N _2}(N _1! N _2 !) ^{2s}.\qedhere
\end{multline*}
\end{proof}

\par

\end{document}